\documentclass[10pt,a4paper]{amsart}
\usepackage{amssymb,amsmath,amsfonts,amscd}
\usepackage{lmodern}
\usepackage[utf8]{inputenc} 
\usepackage[T1]{fontenc}    
\usepackage{url}            
\usepackage{booktabs}       
\usepackage{fancyhdr}
\usepackage{indentfirst}
\usepackage{graphicx}
\usepackage{subfigure}
\usepackage{wrapfig}
\usepackage{newlfont}
\usepackage{color}
\usepackage{latexsym}
\usepackage{csquotes}
\usepackage{amsthm}
\usepackage{mathtools}
\usepackage{nicefrac}
\usepackage{epstopdf}
\usepackage{caption}
\usepackage{bbm}
\usepackage[shortlabels]{enumitem}
\usepackage{tikz-cd}
\usepackage{microtype}      
\usepackage{lipsum}
\usepackage{comment}
\graphicspath{ {./images/} }

\newcommand{\Lsu}[2]{\bigwedge\nolimits^{#1}{#2}}
\newcommand{\Lgiu}[2]{\bigwedge\nolimits_{#1}{#2}}

\def\g{\mathfrak{g}}

\usepackage{hyperref}

\newcommand{\R}{\mathbb R}

\newcommand{\G}{\mathbb G}

\newcommand{\BL}[2]{BL^{{#1},{#2}}
(\G)}

\newcommand{\BLh}[3]{BL^{{#1},{#2}}
(\G, E_0^{#3})}

\newcommand{\scal}[2]{\langle {#1} , {#2}\rangle}

\newcommand{\Scal}[2]{\langle {#1} \vert {#2}\rangle}

\newcommand{\ccheck}{{\vphantom i}^{\mathrm v}\!\,}

\newcommand{\mc}{\mathcal }

\newcommand{\eps}{\varepsilon}

\newcommand{\N}{\mathbb N}

\newcommand{\e}{\mathrm{Euc}}

\def \N {\mathbb{N}}
\def \R {\mathbb{R}}

\def \H {\mathbb{H}}

\def \d {\mathrm{d}}

\def \de {\partial}

\def \g {\mathfrak{g}}

\def \e {\varepsilon }

\def \ker {\mathrm{Ker}}
\def \Im {\mathrm{Im}}
\def \dsy {\displaystyle}
\def \Span {\mathrm{span}}

\def \G {\mathbb{G}}

\newcommand*{\checkV}[2][-3mu]{\ensuremath{\mskip1mu\prescript{\smash{\mathrm v\mkern#1}}{}{\mathstrut#2}}}

\newtheorem{teo}{Theorem}[section]
\newtheorem{prop}[teo]{Proposition}
\newtheorem{coro}[teo]{Corollary}
\newtheorem{lemma}[teo]{Lemma}
\newtheorem{defi}[teo]{Definition}

\newtheorem{proposition}[teo]{Proposition}
\newtheorem{remark}[teo]{Remark}

\newtheorem{oss}[teo]{Remark}

\title{$L^p$-Hodge decomposition 
and global integral estimates on the Cartan group}

\author{
 Annalisa Baldi, 
 Alessandro Rosa 
}

\begin{document}

\begin{abstract}
The study of Sobolev and Poincaré inequalities for differential forms in Carnot groups and in the more general sub-Riemannian setting is still an open problem in its full generality. One may conjecture that, for general Carnot groups, these inequalities are expressed in terms of suitable graded Lebesgue norms. In recent years, many results have been obtained, both in the Euclidean setting and in the Heisenberg groups, as well as for contact manifolds with bounded geometry. There are also some results for general Carnot groups; however, these do not cover the problem in its full generality.

In this paper, we consider a particular Carnot group, the so-called Cartan group (a free Carnot group, of step $3$ with $2$ generators), which provides a natural testing ground for these questions, since its step-three structure already exhibits several phenomena that do not occur in the Heisenberg groups.
In this setting, we are able to prove global Poincaré and Sobolev-Gaffney  inequalities for differential forms.  With the aim of obtaining sharp estimates,  we replace the de Rham complex of differential forms with the Rumin complex. The case $p>1$ is carried out after establishing an $L^p$-Hodge decomposition with homogeneous Sobolev classes. We are able to consider also the endpoint case $p=1$; however, as in Euclidean setting, when $p=1$, the operator we deal with provides only weak-type estimates which do not yield a Hodge decomposition analogous to the case $p>1$. Therefore, in this situation the proof follows a different approach, relying on a recent result proved in \cite{BT}, 
\end{abstract}


\keywords{Hodge decomposition, differential forms, Poincaré inequalities}

\subjclass{35R03, 26D15,  35H10, 43A80, 58A10,
46E35}

\maketitle
\tableofcontents

\section{Introduction}

In $\R^n$, if
\begin{eqnarray*}
1\leq p,q< \infty,\quad \frac{1}{p}-\frac{1}{q}=\frac{1}{n},
\end{eqnarray*}
the Sobolev inequality  states that there exists a constant $C=C(p,q,n)>0$ such that if  $u$ is  a compactly supported function
(i.e.\,a compactly supported 0-form),
then 
\begin{eqnarray*}
\|u\|_{L^q} \leq C\|du\|_{L^p},
\end{eqnarray*} 
where $d$ denotes the exterior differential (hence $du$ is a $1$-form).
The Poincar\'e inequality is a variant formulated for functions $u$  that are not necessarily compactly supported. Poincar\'e and Sobolev inequalities can also be formulated   for differential forms, including  in the  general setting of  Riemannian manifolds. If  $M$ is a Riemannian manifold, we   say that a  $(p,q)$-Poincar\'e$(h)$ inequality holds on $M$, if 
there exists a positive constant $C=C(M,p,q)$ such that for every exact $h$-form $\alpha\in L^p$, there exists a $(h-1)$-form $\phi\in L^q$ such that $d\phi=\alpha$ and
\begin{eqnarray*}
\|\phi\|_{L^q} \leq C\,\|\alpha\|_{L^p}.
\end{eqnarray*} 

The equation $d\phi=\alpha$ is understood in a distributional sense. Analogous definitions have been studied in several sub-Riemannian settings where Rumin's complex replaces the de Rham complex, notably on Heisenberg groups and contact manifolds
 (see e.g. \cite{BFP3,BFP2,BFP5,BFP6,BTT}). In this paper, we focus on another specific sub-Riemannian setting, namely the Cartan group.

\medskip

Roughly speaking, a sub-Riemannian structure on a manifold $M$ is   determined by a subbundle $H \subset TM$ of the tangent bundle which defines the admissible directions at any points of $M$.
In recent years, sub-Riemannian geometries have attracted considerable attention from several perspectives, including differential geometry, geometric measure theory, subelliptic partial differential equations, complex analysis, optimal control, mathematical models in neuroscience and robotics.
Endowing each fiber $H_x$ with an inner product naturally induces a  distance on $M$, defined as the infimum of the Riemannian lengths of curves $\gamma$ such that $\dot{\gamma}(t) \in H_{\gamma(t)}$. Such distance is usually called Carnot--Carath\'eodory distance. 
Among sub-Riemannian spaces, Carnot groups  play a role analogous to that of Euclidean spaces $\R^N$ in Riemannian geometry. They can be seen as rigid ``tangent'' models for general sub-Riemannian manifolds.  We can identify a Carnot group $\mathbb{G}$ with a Lie group $(\mathbb{R}^N,\cdot)$ whose Lie algebra $\mathfrak{g}$ is nilpotent and admits a stratification of step $\kappa$. More precisely, there exist linear subspaces $V_1,\dots,V_\kappa$, called the layers of the stratification, such that
\[
\mathfrak{g} = V_1 \oplus \cdots \oplus V_\kappa, \qquad
[V_1,V_i] = V_{i+1}, \qquad
V_\kappa \neq \{0\}, \qquad
V_i = \{0\} \ \text{for } i > \kappa,
\]
where $[V_1,V_i]$ denotes the subspace of $\mathfrak{g}$ generated by commutators $[X,Y]$ with $X \in V_1$ and $Y \in V_i$. The first layer $V_1$, called the horizontal layer,  plays a fundamental role, since it generates the whole Lie algebra $\mathfrak{g}$ through iterated commutations. In any non-commutative Carnot group  the so-called homogeneous dimension $Q:=\sum_{i=1}^\kappa i\,\text{dim}\,V_i$ -- which coincides with the Hausdorff dimension of the group viewed as a manifold -- is strictly greater than the topological dimension $N$.

As already quoted above, Sobolev and Poincaré inequalities for differential forms on Heisenberg groups and contact manifolds have been widely studied in the last decade. Unfortunately, the study of these inequalities on general Carnot groups remains largely open. In this paper, we study another important example of a Carnot group - the Cartan group - which provides a natural testing ground for these questions, since its step-three structure already exhibits several phenomena that do not occur in step-two groups such as the Heisenberg group. The Cartan group is
  the $5$-dimensional nilpotent Carnot group of step $\kappa=3$ with $2$ generators.
 More precisely, let us consider the vector fields
\[
(X_1,\,\,X_2,\,\,X_3 = [X_1, X_2],\,\,X_4 = [X_1, X_3],\,\,X_5 = [X_2, X_3]).
\]
The group can be identified  with $\R^5$ through exponential coordinates.
The stratification of the Lie algebra induces a family of non-isotropic dilations $\{\delta_\lambda\}_{\lambda>0}$ on the group such that, for any $x=(x_1,\ldots,x_5)\in\R^5$,
$$\delta_\lambda(x)=(\lambda x_1,\lambda x_2,\lambda^2 x_3, \lambda^3 x_4, \lambda^3 x_5).$$
In this case, the homogeneous dimension is $Q=10$, whereas the topological dimension is $N=5$.
This group provides a particular example of a smooth $5$-manifold $M$, whose smooth subbundle $H \subset TM$  has rank two and it is a bracket generating distribution with growth vector $(2,3,5)$. Since Cartan's paper \cite{cartan1910}, these kinds of manifolds have attracted increasing interest, as they arise in several contexts. For instance a $(2,3,5)$ manifold of this type provides an example of non-holonomic mechanical system which describes the rolling of one surface over another
 in $\R^3$, without slipping and twisting (see \cite{BryantHsu}, Section 4.4). The Cartan group also provides an example of conformal geometry naturally associated with ordinary differential equations \cite{CapNeusser,Nurowski}. In recent years, hypoelliptic Hodge-Laplace operators on the Cartan group  \cite{Haller2022,haller2023regularized,haller2023analytic} have been studied with applications to heat kernel expansions and analytic torsion. Recently, in \cite{BT} hypoelliptic Hodge-Laplace operators on the Cartan group have been considered and applied to
the study of div-curl systems in the group. These operators are associated to the so-called Rumin complex $$(E_0^\bullet,d_c),$$
a subcomplex of the de Rham complex introduced by Rumin first for contact manifolds and later for the more general sub-Riemannian setting (see e.g.\,\cite{rumin_jdg}, \cite{rumin_cras}, \cite{rumin_grenoble}, \cite{rumin_palermo} -- in Section 2.5 we provide a short introduction to the Rumin  complex).

Indeed, when dealing with a Carnot group, the Rumin complex better reflects its geometry, since the stratification and the dilations of the group play a prominent role, for instance in obtaining sharp integral estimates. Nevertheless, as stressed above, the treatment of these integral inequalities in general Carnot groups is still far from being fully understood. In \cite{pansu_rumin18}, the authors study $L^p$-Poincaré inequalities for Rumin differential forms on general Carnot groups, using suitable zero-order pseudodifferential Laplacians. However, they obtain sharp estimates only for a few specific Carnot groups, while in the remaining cases the estimates are not sharp and need stronger assumptions. Moreover, their arguments do not cover the case $p=1$.
One of the aims of this paper is to obtain sharp $L^p$-estimates on the Cartan group. In particular, the inequalities for 
 $p=1$ can be established. We also prove Sobolev-Gaffney inequalities with sharp estimates.

The study of the connection between cohomology and Rumin harmonic forms naturally involves the Hodge decomposition. Starting from the works of Helmholtz \cite{helmholtz}, Hodge \cite{hodge1,hodge2}, and de Rham \cite{deRham1,deRham2}, the Hodge decomposition has been widely used in the study of problems arising, for instance, from the Navier–Stokes equations \cite{leray} and in connection with PDEs, quasiconformal maps, and potential theory on Riemannian manifolds (see \cite{schwarz,duff_spencer,ISS,XiDong2}).

  A first example of Hodge decomposition for $L^2$-forms on complete Riemannian manifolds was proved by Kodaira \cite{kodaira} in 1949, while more recently the $L^p$-Hodge decomposition, with $1<p<\infty$, established  in \cite{XiDong}.

In the sub-Riemannian setting, and in particular on Carnot groups, the study of Hodge decompositions with Sobolev spaces presents additional difficulties due to the lack of ellipticity of the relevant differential operators. 
A first example of an $L^p$-Hodge decomposition on sub-Riemannian manifolds was obtained in \cite{BR}, in the setting of  contact sub-Riemannian  manifolds. In this context, the Rumin complex provides a natural replacement for the de Rham complex and allows one to recover a suitable hypoelliptic Hodge theory. 

In this paper as well, we  establish an 
$L^p$-Hodge decomposition for Rumin-differential forms, with homogeneous Sobolev spaces, when $1<p<\infty$. 
  Consider, for instance, a Rumin $1$-form. If we denote by $\G$ the Cartan group and by $\delta_c$ the $L^2$-formal adjoint of $d_c$, we have the following decomposition
\begin{equation}\label{eq: 2 gennaio prima}
L^p(\G,E_0^1)=d_c BL^{1,p}(\G)\oplus\delta_c BL^{3,p}(\G,E_0^{2})\,,
\end{equation}
where $BL^{s,p}(\G)$ denote the homogeneous Sobolev spaces (see Definition \ref{defi.BLG}). Our main results on the Hodge decomposition are summarized in Theorem \ref{Lphodge2}, that
 can be used to obtain $L^p$-Poincaré inequalities for $1<p<\infty$, as proved in Section \ref{Lpsection}. Below we state the $L^p$-Poincaré inequality   for the case of $1$-forms, in order to illustrate the general result stated in Theorem  \ref{teo.globalLpPoincineq}.

\noindent\textit{
Suppose $1<p<Q$ and let $q\geq p$ satisfy $\dsy\frac{1}{q}=\frac{1}{p}-\frac{1}{Q}$. 
Then there exists a constant $C=C(Q,p)>0$ such that for every $d_c$-closed $1$-form $\alpha\in L^p(\G,E_0^1)$, there exists $\phi\in L^q(\G)$ such that $$d_c\phi=\alpha\quad\quad\text{and}\quad\quad\|\phi\|_{L^{q}(\G)}\leq C\|\alpha\|_{L^p(\G,E_0^1)}.$$
}

\medskip

When dealing with the case $p=1$, we cannot expect to have an $L^1$-result analogous to \eqref{eq: 2 gennaio prima}. Indeed,  even in the Euclidean setting $\R^n$ the $L^1$-decomposition with Sobolev classes is false (see  \cite{BaldoOrlandi98}).
One can only obtain a decomposition in the sense of distributions with estimates in suitable weak Lebesgue spaces. Therefore, proving a sharp Poincaré inequality in the endpoint case $p=1$ requires different ideas, closely related to the div-curl estimates proved in \cite{BT}. One of our main results is the  $L^1$-Poincaré inequality proved in Section \ref{Lpsection}. We state it here only for $1$-forms
 and refer to  Theorem \ref{P1} for the general case of forms of arbitrary degree.

\noindent\textit{
There exists a constant $C>0$ such that, for every $d_c$-closed $1$-form $\alpha\in L^1(\G,E_0^1)$, there exists $\phi\in L^{Q/(Q-1)}(\G)$ such that
\begin{equation}\label{p=1 intro}
    d_c\phi=\alpha\quad\quad\text{and}\quad\quad\|\phi\|_{L^{Q/(Q-1)}(\G)}\leq C\|\alpha\|_{L^1(\G,E_0^1)}.
\end{equation}}

It is interesting to observe that the previous Poincaré inequalities easily implies the classical Poincaré inequalities for functions, already well known in any Carnot group (see e.g. \cite{FLW_grenoble,FGaW,CDG,gromov,Lu94}).
Indeed, the classical Poincar\'e inequality 
for functions says that if $1\le p\le Q$, for any (say) Lipschitz continuous function $u$ such that $\nabla_\G u\in L^p(\G)$ there exists a constant $c_u$ such that 
$$
\|u-c_u\|_{L^{pQ/(Q-p)}}\leq C(Q,p)\,\|\nabla_\G u\|_{L^p}.
$$
 where $\nabla_\G$ is the intrinsic gradient (see Section \ref{sec:BL2} below). For a 0-forms $u$, $d_cu$ can be identified with $\nabla_\G u$.  Observe that $\alpha:=d_c u$ is a closed form,
so that there exists $\phi$ in $L^{Q/(Q-1)}(\G)$  such that \eqref{p=1 intro} holds. Then $ u - \phi  = c_u$ (since $u-\phi $ is closed). Therefore
$$
\|u-c_u\|_{L^{pQ/(Q-p)}} = \|\phi\|_{L^{pQ/Q-p}}\leq C\,\|d_cu\|_{L^{p}} \le\, C \|\nabla_\G u\|_{L^{p}}.
$$

\bigskip 

The paper is organized as follows.
Section \ref{sec:prelres} reviews basic definitions and properties of Carnot groups and briefly introduces the Rumin complex. In particular, Section \ref{sec:kernels} recalls the notions of kernels and convolutions, as well as Folland-Stein inhomogeneous Sobolev spaces, together with their main properties.

Section \ref{sec:BL2} introduces what we call Beppo Levi homogeneous Sobolev spaces and  proves a characterization of the corresponding norms in terms of horizontal derivatives.

Section \ref{ex.Cartanesempio} is devoted to a brief introduction to the Rumin complex on the Cartan group. In particular, we analyze in detail the definition and properties of the homogeneous left-invariant Laplacians $\Delta_{R,h}$ introduced in \cite{BT}.

In Section \ref{sec:LpHodge}, we prove an $L^p$-Hodge decomposition for $1<p<\infty$. The proof relies on the Hodge decomposition for smooth forms, which is also established in this section, together with $L^p$-continuity estimates for kernels. Moreover, we obtain a weak $L^1$-Hodge decomposition based on estimates in weak-Lebesgue spaces.

Section \ref{sec.PoincareP} is devoted to Poincar\'e inequalities. We first prove $L^p$-Poincar\'e inequalities for $1<p<\infty$, as a consequence of the Hodge decomposition. In contrast, we establish $L^1$-Poincar\'e inequalities based on Gagliardo-Nirenberg estimates obtained in \cite{BT} and a suitable result for homogeneous kernels on shells proved in \cite{BFP3}.

Finally, we also establish Sobolev–Gaffney inequalities, providing intrinsic characterizations of Sobolev spaces of Rumin forms in terms of the Rumin differential $d_c$ and codifferential $\delta_c$,
 which are naturally adapted to the stratification of the group. In the setting of Heisenberg groups, in bounded regions, Gaffney inequalities have been considered in \cite{franchi_montefalcone_serra}, and for general contact manifolds with bounded geometry in \cite{BTT}.

\section{Preliminary results}\label{sec:prelres}

\subsection{Carnot groups}
We recall that a {Lie group} is a smooth manifold $\G$ endowed with a binary operation $\cdot$ such that $(\G,\cdot)$ is a group and the maps $$\G\times\G\ni (x,y)\mapsto x\cdot y\in\G\quad\text{and}\quad\G\ni x\mapsto x^{-1}\in\G$$
are both smooth. We set $N$ as the topological dimension of $\G$.

For $x\in\G$ fixed, we denote by 
\begin{equation}\label{eq.trasl}
\tau_x:\G\to\G,\quad\tau_x(g):=x\cdot g
\end{equation}
the {left-translation by $x$}. A smooth vector field $X$ on $\G$ is called \textit{left-invariant} if $$X(f\circ\tau_x)=(Xf)\circ\tau_x,\quad\text{for every $x\in\G$, $f\in C^{\infty}(\G)$,}$$
or, equivalently, if $$\d_y\tau_x(X_y)=X_{x\cdot y},\quad\text{for every $x,y\in\G$.}$$
We denote by $\g$ the \textit{Lie algebra of $\G$} that is the vector space of all the left-invariant vector fields on $\G$.

A \textit{Carnot group $\G$ of step $\kappa$} is a connected, simply connected, nilpotent Lie group whose Lie algebra is stratified, i.e.\,there exist nontrivial linear subspaces $V_1,\ldots V_\kappa$ such that
$$\g=V_1\oplus\cdots\oplus V_\kappa$$
and
$$V_{i+1}=\left[V_1,V_i\right],\text{\quad for every $i=1,\ldots,\kappa-1,$\quad\quad $V_\kappa\neq\{0\}$\quad and \quad $\left[V_1,V_\kappa\right]=0,$}$$
where $[V_1,V_i]$ is the subspace of $\g$ generated by the commutators $[X,Y]$ with $X\in V_1$ and $Y\in V_i$.
We set $m_j:=\dim(V_j)$ and $h_j:=\sum_{i=0}^j m_i$, for every $j=0,\ldots,\kappa$.

Through the exponential map, we can identify $p$ with the $N$-uple $(p_1,\ldots,p_N)\in\R^N$ and we identify $\G$ with $(\R^N,\cdot)$ where the explicit expression of the group operation $\cdot$ is determined by the Campbell-Baker-Hausdorff-Dynkin formula (see \cite{BLU}).
In exponential coordinates,
the unit element $e$ of $\G$ is $e=(0,\dots,0)$.

The first layer $V_1$ will be called \textit{horizontal layer}; a left-invariant
vector field in $V_1$, identified with a differential operator, will be
called an \textit{horizontal derivative}.
We denote by $\{X_1,\dots,X_{m_1}\}$ a basis of $V_1$.
Since $\g$ is a finite dimensional space, we endow $\g$ with an inner product $\langle\cdot,\cdot\rangle$, so that the adapted basis $\{X_1,\ldots,X_N\}$ is an orthonormal basis of $\g$.

For any $\lambda >0$, the
{\it dilation} $\delta_\lambda:\G\to\G$, is defined as
\begin{equation}\label{dilatazioni}
\delta_\lambda(x_1,...,x_N)=
(\lambda^{d_1}x_1,...,\lambda^{d_N}x_N),
\end{equation} where $d_i\in\N$ is called the \textit{homogeneity of
the variable} $x_i$ in
$\G$ (see \cite{folland_stein} Chapter 1).
We denote by $Q$ the \textit{homogeneous dimension
of $\G$} defined by
\begin{equation}\label{dimensione}
Q:=\sum_{i=1}^\kappa i\,\text{dim}\,V_i.
\end{equation}

Connected, simply connected, nilpotent Lie groups are locally compact groups and the  Haar measure coincides with the {Lebesgue measure} $\mathcal{L}^N$. Thus, we have $\mathcal{L}^N(g\cdot E)=\mathcal{L}^N(E)$, for every $E\subseteq\G\cong\R^N$ measurable set, for every $g\in\G$.

In this paper, we denote by $ \rho(\cdot)$ a homogeneous norm, smooth outside the origin, that
induces a genuine distance on $\G$ as in \cite{BT}. We associate the distance
$$d(x,y):=\rho(y^{-1}\cdot x).$$
We have that $d$ is homogeneous of degree $1$ with respect to group dilations and it is left-invariant, i.e.\,
$$d(z\cdot x, z\cdot y)=d(x,y)\quad\text{and}\quad d(\delta_\lambda(x),\delta_\lambda(y))=\lambda d(x,y),\quad\forall\,x,y,z\in\G,\lambda>0.$$

\bigskip

\subsection{Convolutions, kernels and Sobolev spaces in Carnot groups}\label{sec:kernels}
 Let $f\in \mathcal{D}(\G)$ and $g\in L^1_{\mathrm {loc}}(\G)$, then we define the convolution on $\G$ between $f$ and $g$ as
$$(f\ast g)(x):=\int_{\G}\,f(y)g(y^{-1}\cdot x)\d y,\quad\quad\forall\,x\in\G.$$
If $g$ is a smooth function and $L$ is a left-invariant differential operator, then
$$L(f\ast g)=f\ast Lg.$$
We denote
    by $\ccheck f$ the function defined by $\ccheck f(p):=
    f(p^{-1})$, and, if $u\in\mc D'(\G)$, then $\ccheck u$
    is the distribution defined by $\Scal{\ccheck u}{\phi}
    :=\Scal{u}{\ccheck\phi}$ for any test function $\phi$.
Also, if $u,v\in\mathcal{D}'(\G)$ and at least one of them has compact support, it is well posed the convolution between $u$ and $v$, and
$$\langle u\ast v|\varphi\rangle=\langle v|\checkV{u}\ast\varphi\rangle=\langle u|\varphi\ast\checkV{v}\rangle,\quad\quad\forall\,\varphi\in \mathcal{D}(\G).$$
We will use the following notations for higher-order derivatives: if $J=(j_1,\ldots,j_{N})$ is a multi-index, we set
\begin{equation}\label{derivate}
X^{J}=X_1^{j_1}\cdots X_N^{j_N},
\end{equation}
and
$$|J|:=j_1+\cdots+j_{N},\quad\quad d(J):=d_1 j_1+\cdots+d_N j_{N},$$
that are, respectively, the order of the differential operator $X^{J}$ and its degree of homogeneity with respect to group dilations. By the \textit{Poincaré-Birkhoff-Witt theorem} (see \cite{ricci}), we know that the differential operators $(X^{J})_J$ form a basis of the algebra of left-invariant differential operators in $\G$.
Moreover, if $v\in \mathcal{D}'(\G)$ and $u\in\mathcal{E}'(\G)$, we have that, for every $\varphi \in \mathcal{D}(\G)$,
\begin{align}\label{eq.convspostcose}
\langle (X^I u)\ast v|\varphi\rangle&=\langle X^I u| \varphi\ast \checkV{v}\rangle=(-1)^{|I|}\langle u| \varphi\ast (X^I\,\checkV{v})\rangle=(-1)^{|I|}\langle u\ast\checkV{X^I}\,\checkV{v}|\varphi\rangle.
\end{align}

We recall that, if $u\in\mathcal{D}'(\G)$, then $u$ is called \textit{homogeneous distribution of degree $\mu\in\R$} if
$$\langle u\circ \delta_\lambda|\varphi\rangle:=\langle u|\lambda^{-Q}\varphi\circ \delta_{\lambda^{-1}}\rangle=\lambda^{\mu}\langle u|\varphi\rangle,\quad\forall\,\varphi\in \mathcal{D}(\G), \lambda>0.$$
If $K\in\mathcal{D}'(\G)$,  the convolution operator $u\mapsto u\ast K$ from $\mathcal{D}(\G)$ to $C^\infty(\G)$
is a left-invariant operator. We refer to $K$ as the \textit{kernel} of the operator.
We say that $K$ is a \textit{kernel of type $\mu$} if $K$ is a homogeneous distribution of degree $\mu-Q$, that is smooth outside the origin, and the convolution operator with the kernel $K$ is called \textit{operator of type $\mu$}.\bigskip

\begin{lemma}[see Lemma $3.11$ in \cite{BFP3}]\label{lemma.checkconvkernel}
If $K$ is a kernel of type $\mu\in(0,Q)$, $f\in L^1(\G)$ and $\varphi\in \mathcal{D}(\G)$, then
$$\langle f\ast K|\varphi\rangle=\langle f|\varphi\ast \checkV{K}\rangle.$$
The result is still true if we assume that $K\in L^1_{loc}(\G)$ and $f$ is compactly supported.
\end{lemma}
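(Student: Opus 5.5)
The plan is to reduce the identity to Fubini's theorem. The only point needing care is the absolute integrability of the double integral, which will follow from the local integrability and polynomial decay of $K$ at infinity. First I would justify that $f\ast K$ makes sense. Since $K$ is of type $\mu\in(0,Q)$, it is smooth away from $e$ and homogeneous of degree $\mu-Q\in(-Q,0)$. Hence $|K(x)|\le C\rho(x)^{\mu-Q}$. The standard polar-coordinate estimate in homogeneous groups then shows that $K\in L^1_{loc}(\G)$, and in fact $K=K\chi_{B(e,1)}+K(1-\chi_{B(e,1)})$ with the first piece in $L^1(\G)$ and the second in $L^\infty(\G)$. So $f\ast K=f\ast K_1+f\ast K_2$ with $f\ast K_1\in L^1$ (Young) and $f\ast K_2\in L^\infty$. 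Thus $f\ast K\in L^1_{loc}$ and defines a distribution, and the pairing with $\varphi$ is an ordinary integral.

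Next I would write
$$\langle f\ast K|\varphi\rangle=\int_\G\int_\G f(y)K(y^{-1}x)\varphi(x)\,dy\,dx .$$
To apply Fubini I would show that
$$\int\!\!\int |f(y)||K(y^{-1}x)||\varphi(x)|\,dy\,dx<\infty.$$
The inner $x$-integral equals $\big(|\varphi|\ast|\checkV K|\big)(y)$, using the change of variable $x=yz$ and left invariance of Haar measure. This is bounded uniformly in $y$ by $\|\varphi\|_{L^1}\|K_2\|_{L^\infty}+\|\varphi\|_{L^\infty}\|K_1\|_{L^1}$, after splitting $K$ as above and using $\checkV{K}_1\in L^1$ (inversion preserves Haar measure on nilpotent groups). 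Since $f\in L^1$, the double integral converges. Swapping the order gives
$$\int f(y)\Big(\int \varphi(x)K(y^{-1}x)\,dx\Big)dy=\int f(y)\,(\varphi\ast\checkV K)(y)\,dy,$$
because $\int\varphi(x)\checkV K(x^{-1}y)\,dx=\int\varphi(x)K(y^{-1}x)\,dx$. This is exactly $\langle f|\varphi\ast\checkV K\rangle$, with $\varphi\ast\checkV K$ bounded so that the pairing with $f\in L^1$ is meaningful.

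For the second statement, with $K\in L^1_{loc}$ and $f$ compactly supported (say $f\in L^1$ with support in a compact set $A$), the same Fubini argument works with a different bound. The integrand is supported in $A\times\mathrm{supp}\,\varphi$, so $y^{-1}x$ ranges over the compact set $A^{-1}\cdot\mathrm{supp}\,\varphi$. There $K$ is integrable, and $\varphi\ast\checkV K$ is continuous, hence bounded on $A$. This gives absolute integrability in the same way; if $f$ is only a compactly supported distribution, I would instead use the distributional identity recalled before the lemma. The main obstacle, which is mild, is the uniform-in-$y$ bound on $|\varphi|\ast|\checkV K|$, i.e.\ the decomposition of $K$ into an integrable local part and a bounded tail via homogeneity.
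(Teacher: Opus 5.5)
Your proof is correct. The paper gives no argument of its own for this lemma; it is quoted from \cite{BFP3}. Your Fubini--Tonelli argument, which secures absolute convergence by splitting $K$ into the integrable piece $K\chi_{B(e,1)}$ and the bounded tail $K(1-\chi_{B(e,1)})$ so that $|\varphi|\ast|\checkV{K}|$ is bounded uniformly, is the standard justification of exactly this identity.

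In the compactly supported case, absolute convergence comes from the bound $\sup_{y\in A}(|\varphi|\ast|\checkV{K}|)(y)\le\|\varphi\|_{L^\infty}\|K\|_{L^1(A^{-1}\cdot\mathrm{supp}\,\varphi)}$. The continuity of $\varphi\ast\checkV{K}$ only serves to make the right-hand side meaningful.
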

We recal now a key result, proved in \cite{BFP3}. If $P$ is the operator of convolution with a kernel of type $\mu>0$, and $f\in L^1$, then the $L^1$-norm of $Pf$ on shells $B(e,2R)\setminus B(e,R)$ is $O(R^\mu)$. If furthermore $f$ has vanishing average, this can be improved to $o(R^\mu)$, as stated in the following theorem.

\begin{teo}[see Theorem $3.13$ in \cite{BFP3}]\label{teo.avesterrorG}
If $K$ is a kernel of type $\mu\in (0,Q)$, then for every $f\in L^1(\G)$ such that $\dsy\int_\G f=0$, we have
$$R^{-\mu}\int_{B(e,2R)\setminus B(e,R)}|f\ast K| \xrightarrow{R\to+\infty} 0.$$
\end{teo}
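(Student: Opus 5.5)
The plan is to split the shell integral according to where the mass of $f$ lives, and to exploit the vanishing average through the mean value theorem on the kernel. Fix $\varepsilon>0$ and choose $r>0$ with $\int_{\G\setminus B(e,r)}|f|<\varepsilon$. Write $f=f_1+f_2$, where $f_1:=f\chi_{B(e,r)}$ and $f_2:=f\chi_{\G\setminus B(e,r)}$. Set $m:=\int f_1=-\int f_2$, so that $|m|<\varepsilon$. We then decompose
$$f=(f_1-m\,\chi_{B(e,r)}/|B(e,r)|)+(f_2+m\,\chi_{B(e,r)}/|B(e,r)|)=:g+h.$$
Here $g$ is supported in $B(e,r)$ and has zero average, while $\|h\|_{L^1}\le 2\varepsilon$.

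For $h$ we use the crude estimate. Since $|K(x)|\le C\rho(x)^{\mu-Q}$, Fubini gives
$$\int_{B(e,2R)\setminus B(e,R)}|h\ast K|\le \int|h(y)|\int_{B(e,2R)}|K(y^{-1}x)|\,dx\,dy.$$
The inner integral is at most $\int_{B(y^{-1},2R)}\rho^{\mu-Q}\le\int_{B(e,2R)}\rho^{\mu-Q}=CR^\mu$. The second inequality holds because, among sets of fixed measure, $\rho^{\mu-Q}$ is maximised in $L^1$ by a ball centred at the origin (decreasing rearrangement), and left-translated balls have the same measure. Hence this contribution is bounded by $CR^\mu\|h\|_1\le C\varepsilon R^\mu$.

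For $g$ we use the cancellation. For $R>2r$ (possibly after enlarging $R$ by a constant depending on the quasi-triangle constant) and $x$ in the shell,
$$g\ast K(x)=\int_{B(e,r)}g(y)\bigl(K(y^{-1}x)-K(x)\bigr)\,dy.$$
This is the step I expect to be the main obstacle: we need the Carnot-group mean value estimate for $K$, smooth away from the origin and homogeneous of degree $\mu-Q$. Concretely,
$$|K(y^{-1}x)-K(x)|\le C\,\rho(y)\,\rho(x)^{\mu-Q-1}\quad\text{when }\rho(y)\le c\,\rho(x),$$
which is the standard estimate in Folland–Stein, obtained by homogeneity and the stratified mean value theorem applied to horizontal derivatives of $K$. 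From it, $|g\ast K(x)|\le C r\|g\|_1R^{\mu-Q-1}$ on the shell. Integrating over a shell of measure $\sim R^Q$ gives $Cr\|g\|_1R^{\mu-1}$, so this contribution is $o(R^\mu)$ with $r$ fixed.

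Combining the two pieces gives $\limsup_{R\to\infty}R^{-\mu}\int_{\mathrm{shell}}|f\ast K|\le C\varepsilon$. Since $\varepsilon$ is arbitrary, the limit is $0$. The constant $C$ is uniform: it depends only on $K$ and $\|f\|_1$, the latter through $\|g\|_1\le 2\|f\|_1$.
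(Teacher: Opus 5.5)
Your argument is correct and follows the standard route to this result. The paper gives no proof of its own: it quotes Theorem 3.13 of \cite{BFP3} after describing exactly this mechanism, namely a uniform $O(R^{\mu})$ shell bound for arbitrary $L^1$ data, which you apply to the small remainder $h$ (your rearrangement argument for it is fine), improved to $o(R^{\mu})$ by the first-order cancellation of the compactly supported zero-average piece $g$. One small precision: $K(y^{-1}x)-K(x)$ is a \emph{left} translation, while the stratified mean value theorem is stated with left-invariant horizontal fields, so apply it to $\tilde K(z):=K(z^{-1})$ via $K(y^{-1}x)-K(x)=\tilde K(x^{-1}y)-\tilde K(x^{-1})$, where $\tilde K$ is again a kernel of type $\mu$ and $\rho(x^{-1})=\rho(x)$.
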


\begin{prop}[Proposition $1.11$ in \cite{folland}]\label{teo.contofconvtypeop}
Suppose $0<\mu<Q$, $1<p<Q/\mu$ and $\dsy\frac{1}{q}=\frac{1}{p}-\frac{\mu}{Q}$. Let $K$ be a kernel of type $\mu$. If $u\in L^p(\G)$, then the convolutions $u\ast K$ and $K\ast u$ exist a.e.\,and are in $L^q(\G)$. Moreover, there is a constant $C=C(p,\mu)>0$ such that
$$\|u\ast K\|_{L^q(\G)}\leq C\|u\|_{L^p(\G)}\quad\text{and}\quad\|K\ast u\|_{L^q(\G)}\leq C\|u\|_{L^p(\G)}.$$
\end{prop}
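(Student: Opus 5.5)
The plan is to follow the classical Hardy--Littlewood--Sobolev route: compare $K$ with the model kernel $\rho^{\mu-Q}$, split this kernel into a local part and a far part, estimate each part by elementary means, and then interpolate. Since $K$ is homogeneous of degree $\mu-Q$ and smooth away from $e$, we have
$$|K(x)|\le C\,\rho(x)^{\mu-Q}, \qquad C=\sup_{\rho(y)=1}|K(y)|<\infty.$$
Because $\mu>0$, $K$ is locally integrable, so the distribution $K$ coincides with this function. It therefore suffices to prove the estimate for $|u|\ast\rho^{\mu-Q}$. The statement for $K\ast u$ follows in the same way: writing $(K\ast u)(x)=\int K(xy^{-1})u(y)\,\d y$ and using $\rho(y^{-1})\simeq\rho(y)$ (for a symmetric homogeneous norm this is an equality), the same bound holds. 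Since Haar measure is bi-invariant, the roles of $x$ and $y$ can be exchanged symmetrically.

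\textbf{Step 1: splitting at a radius $R$.} For fixed $x$ and $R>0$, write
$$\int|u(y)|\,\rho(y^{-1}x)^{\mu-Q}\,\d y=I_R+II_R,$$
according to whether $\rho(y^{-1}x)<R$ or $\rho(y^{-1}x)\ge R$.

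\emph{Local part.} Decompose the ball $\{\rho(y^{-1}x)<R\}$ into dyadic shells $2^{-k-1}R\le\rho<2^{-k}R$. Using $|B(x,r)|=c\,r^Q$, each shell contributes at most a multiple of $(2^{-k}R)^{\mu}\,\mathcal M u(x)$, where $\mathcal M$ is the Hardy--Littlewood maximal function on the space of homogeneous type $(\G,d,\mathcal L^N)$. Summing the geometric series gives $I_R\le C R^\mu\,\mathcal M u(x)$.

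\emph{Far part.} By H\"older's inequality, $II_R\le\|u\|_p\,\|\rho^{\mu-Q}\chi_{\rho\ge R}\|_{p'}$. Integrating in polar coordinates (via the dilations, the volume of the shell $\{r\le\rho<2r\}$ is comparable to $r^Q$), this norm is finite precisely when $(Q-\mu)p'>Q$, i.e. when $p<Q/\mu$. In that case it equals $C R^{\mu-Q/p}$.

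\textbf{Step 2: optimizing $R$.} Choose $R$ so that the two bounds balance: $R^{Q/p}=\|u\|_p/\mathcal M u(x)$. This yields the pointwise bound
$$|u|\ast\rho^{\mu-Q}(x)\le C\,\|u\|_p^{\,1-p/q}\,\bigl(\mathcal M u(x)\bigr)^{p/q}.$$
Taking $L^q$ norms and using the $L^p$-boundedness of $\mathcal M$ for $p>1$ then gives $\|u\ast K\|_q\le C\|u\|_p$. The same pointwise bound also shows that the convolution integral converges absolutely almost everywhere, which proves existence a.e.

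\textbf{Main obstacle.} The step needing the most care is the maximal function. I would invoke the Coifman--Weiss theory: $\G$ with $d$ and Lebesgue measure is doubling, so the Vitali covering lemma yields a weak $(1,1)$ bound for $\mathcal M$, and Marcinkiewicz interpolation gives the strong $(p,p)$ bound for $p>1$. This is where $p>1$ is genuinely used, while the restriction $p<Q/\mu$ enters only through the far-part integrability. An alternative route would be weak-type estimates for $f\mapsto f\ast\rho^{\mu-Q}$ (the kernel lies in weak $L^{Q/(Q-\mu)}$) followed by Marcinkiewicz interpolation between two admissible exponent pairs; I would use it if the maximal-function route proved delicate.
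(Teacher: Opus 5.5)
Your proof is correct, but it follows a different route from the one behind the statement. The paper gives no proof and quotes Folland, whose argument transplants Stein's Euclidean proof of the Hardy--Littlewood--Sobolev theorem. From $|K|\le C\rho^{\mu-Q}$ one splits $K$ at a radius $R$ chosen in terms of the level $t$. The piece $u\ast(K\chi_{\{\rho<R\}})$ is controlled in $L^p$ by Young's inequality, and the piece $u\ast(K\chi_{\{\rho\ge R\}})$ in $L^\infty$ by H\"older. This gives a weak-type $(p,q)$ bound for every admissible $p$, and Marcinkiewicz interpolation concludes. You instead prove Hedberg's pointwise inequality $|u\ast K|\le C\|u\|_{L^p}^{1-p/q}(\mathcal M u)^{p/q}$ and then invoke the maximal theorem on the doubling space $(\G,d,\mathcal L^N)$.

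Both routes use the same kernel input: local integrability of $\rho^{\mu-Q}$ because $\mu>0$; the bound $\|\rho^{\mu-Q}\chi_{\{\rho\ge R\}}\|_{L^{p'}}=CR^{\mu-Q/p}<\infty$, which holds exactly when $p<Q/\mu$; and the scaling $|B(x,r)|=cr^Q$. The weak-type route needs no covering lemma, and it produces the endpoint $L^1\to L^{Q/(Q-\mu),\infty}$ (Lemma~\ref{lemma.stimaconvL1supp}) from the same computation. Your route moves all the interpolation onto $\mathcal M$ but returns a pointwise bound. That bound gives the a.e.\ absolute convergence of the convolution integral for free, and it also yields the $p=1$ weak endpoint via the weak $(1,1)$ bound for $\mathcal M$.

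One step needs tightening. For $K\ast u$, the symmetry $\rho(y^{-1})=\rho(y)$ does not convert the right gauge $\rho(xy^{-1})$ into the left gauge $\rho(y^{-1}x)$, and the two are not uniformly comparable on a non-abelian group. You can fix this in either of two ways:
\begin{itemize}
\item rerun your argument with the right-translated balls $\{y:\rho(xy^{-1})<r\}$, which also have measure $cr^Q$ and give a doubling structure; or
\item use the identity $(K\ast u)(x)=(\checkV{u}\ast\checkV{K})(x^{-1})$, where $\checkV{K}$ is again a kernel of type $\mu$ by Proposition~\ref{prop.kernelopcheck} and $\|\checkV{u}\|_{L^p}=\|u\|_{L^p}$.
\end{itemize}
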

\medskip
We recall the notion of \textit{inhomogeneous Folland-Stein Sobolev space} (see \cite{folland_stein}).

\begin{defi}\label{defi.SobolevHn}
Let $1\leq p\leq \infty$ and $s\in\N$. 
Then, we define $W^{s,p}(\G)$ as the space of all $f\in L^p(\G)$ such that
$$X^Jf\in L^p(\G),\quad\text{for all multi-indices $J$ with $d(J)\leq s$},$$
endowed with the norm
$$\|f\|_{W^{s,p}(\G)}:=\sum_{d(J)\leq s}\|X^J f\|_{L^p(\G)}.$$
\end{defi}

Folland-Stein Sobolev spaces have the following properties (see  \cite{folland}, \cite{FSSC_houston}, \cite{BLU}).
\begin{teo}\label{teo.denseSobspHn}
Let $1\leq p\leq \infty$ and $s\in\N$. Then,
\begin{enumerate}
\item $W^{s,p}(\G)$ is a Banach space;
\item if $p<\infty$, $W^{s,p}(\G)\cap C^\infty(\G)$ and $\mathcal{D}(\G)$ are dense in $W^{s,p}(\G)$;
\item if $1<p<\infty$, then $W^{s,p}(\G)$ is reflexive;
\item the Definition \ref{defi.SobolevHn} is independent from the choice of the basis $\{X_1,\ldots, X_N\}$ (see Corollary 4.14 in \cite{folland}).
\end{enumerate}
\end{teo}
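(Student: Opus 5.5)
The four properties of $W^{s,p}(\G)$ are the standard ones established in \cite{folland}, \cite{FSSC_houston}, \cite{BLU}. I plan to prove them in the order (1), (2), (4), (3), since reflexivity is easiest once the space is realised as a closed subspace of a product of $L^p$ spaces.

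\emph{Completeness.} Let $(f_k)$ be a Cauchy sequence in $W^{s,p}(\G)$. For every multi-index $J$ with $d(J)\le s$, the sequence $(X^Jf_k)$ is Cauchy in $L^p$, so it converges to some $g_J$; set $f:=g_0$. For a test function $\varphi$ we have $\langle X^J f_k|\varphi\rangle=(-1)^{|J|}\langle f_k|\checkV{X^J}\varphi\rangle$, where $\checkV{X^J}$ is the transpose operator, which is again left-invariant. Letting $k\to\infty$ in this identity gives $X^Jf=g_J$ in the sense of distributions. Hence $f\in W^{s,p}(\G)$ and $f_k\to f$ in norm.

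\emph{Density.} For density of smooth functions, take a mollifier $\phi_\e:=\e^{-Q}\phi\circ\delta_{1/\e}$ with $\int\phi=1$. We have $X^J(f\ast\phi_\e)=(X^Jf)\ast\phi_\e$, which requires placing the convolution on the side compatible with left-invariance, i.e. convolving $\phi_\e$ on the left, $\phi_\e\ast f$, with the paper's convention. Then $\phi_\e\ast X^Jf\to X^Jf$ in $L^p$ for $p<\infty$ by the approximate identity property. For compact support, multiply by cut-offs $\chi_R:=\chi\circ\delta_{1/R}$. By Leibniz's rule, $X^J(\chi_R f)$ equals $\chi_R X^Jf$ plus terms in which some derivatives $X^{J'}$ fall on $\chi_R$. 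Each such term carries a factor $X^{J'}\chi_R=R^{-d(J')}(X^{J'}\chi)\circ\delta_{1/R}$, which is uniformly bounded and tends to $0$. Dominated convergence then gives $\chi_Rf\to f$ in $W^{s,p}(\G)$.

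\emph{Independence of the basis.} Any other graded basis is related to $\{X_i\}$ by a linear change preserving the layers. So each new monomial $Y^J$ is a finite combination of operators $X^{I}$ with $d(I)=d(J)$, using PBW and the commutator relations, which preserve homogeneous degree. Therefore the two norms are equivalent. Alternatively, one can appeal to Folland's characterisation through $V_1$ alone, where every $X_j$ of degree $d_j$ is an iterated commutator of horizontal fields.

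\emph{Reflexivity.} The map $f\mapsto (X^Jf)_{d(J)\le s}$ is an isometry of $W^{s,p}(\G)$ into $\prod_J L^p(\G)$, equipped with the sum norm. By completeness its image is closed. Since $L^p$ is reflexive for $1<p<\infty$, so is the finite product, and closed subspaces of reflexive spaces are reflexive.

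\emph{Main obstacle.} The delicate point is the density step, specifically the order of convolution so that left-invariant fields commute with mollification. Everything else is routine functional analysis.
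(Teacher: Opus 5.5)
Your proof is correct. The paper gives no proof of its own here: it quotes \cite{folland}, \cite{FSSC_houston}, \cite{BLU}, and for item (4) Corollary 4.14 of \cite{folland}, which belongs to Folland's analytic treatment of Sobolev spaces built from horizontal derivatives and the sub-Laplacian. Your route is self-contained and elementary:
\begin{itemize}
\item items (1) and (3) follow from the isometric embedding $f\mapsto (X^Jf)_{d(J)\le s}$ onto a closed subspace of a finite product of $L^p$ spaces;
\item item (2) follows from dilation-scaled cut-offs plus left mollification;
\item item (4) follows from a purely algebraic PBW argument: expanding each $Y^J$ and reordering with $[V_i,V_j]\subseteq V_{i+j}$ gives $Y^J=\sum_{d(I)=d(J)}c_IX^I$.
\end{itemize}
This algebraic proof of (4) works for every $1\le p\le\infty$. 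It only covers bases adapted to the stratification, but that is the only setting in which $d(J)$ is meaningful anyway. The citation, on the other hand, identifies $W^{s,p}(\G)$ with Folland's spaces, so that results such as Proposition~\ref{folland zero} can be applied directly later in the paper.

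A few details to tidy up.
\begin{itemize}
\item The identity $X^J(f\ast\phi_\varepsilon)=(X^Jf)\ast\phi_\varepsilon$ that you first write is false with the paper's convention, where $L(f\ast g)=f\ast Lg$ for left-invariant $L$. Only $X^J(\phi_\varepsilon\ast f)=\phi_\varepsilon\ast X^Jf$ holds, which is what you then use.
\item The convergence $\phi_\varepsilon\ast g\to g$ in $L^p$ follows from continuity of left translations in $L^p$ for $p<\infty$, together with $\|\phi_\varepsilon\|_{L^1}=\|\phi\|_{L^1}$.
\item In the Leibniz step, say explicitly that the factors left on $f$ form an ordered sub-monomial $X^{J''}$ with $J''\le J$ componentwise. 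Hence $d(J'')\le s$ and $X^{J''}f\in L^p$.
\item For density of $\mathcal{D}(\G)$, truncate first and then mollify, since $\mathrm{supp}(\phi_\varepsilon\ast g)\subseteq\mathrm{supp}\,\phi_\varepsilon\cdot\mathrm{supp}\,g$ is compact.
\end{itemize}
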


\begin{prop}[Proposition 1.9 and Theorem 4.9 in \cite{folland}]\label{folland zero}If $K$ is a kernel of type $0$, $1<p<\infty$, $s\in\N$, then the mapping $u\mapsto u\ast K$ defined for $u\in \mathcal{D}(\G)$ extends to a bounded operator on $W^{s,p}(\G)$
\end{prop}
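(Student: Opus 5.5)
The statement says that if $K$ is a kernel of type $0$, $1<p<\infty$ and $s\in\N$, then $u\mapsto u\ast K$, defined on $\mathcal{D}(\G)$, extends to a bounded operator on $W^{s,p}(\G)$. The plan has two steps. First prove the case $s=0$, namely $L^p$-boundedness of convolution with a type-$0$ kernel. Then lift this to higher $s$ by commuting left-invariant derivatives through the convolution and controlling the resulting terms with the case $s=0$. By Theorem \ref{teo.denseSobspHn}(2), $\mathcal{D}(\G)$ is dense in $W^{s,p}(\G)$, so it suffices to prove $\|u\ast K\|_{W^{s,p}}\le C\|u\|_{W^{s,p}}$ for $u\in\mathcal{D}(\G)$ and then extend by continuity.

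\textbf{Step 1 ($s=0$).} A kernel of type $0$ is homogeneous of degree $-Q$ and smooth away from the origin. One checks (Folland) that it has the form $K=c\,\delta_e+\mathrm{p.v.}\,\Omega$, where $\Omega$ is homogeneous of degree $-Q$ with vanishing mean on the unit sphere. Convolution with $K$ is therefore a singular integral operator on the space of homogeneous type $(\G,d,\mathcal{L}^N)$. I will verify the following:
\begin{enumerate}[(a)]
\item the H\"ormander condition $\int_{d(x,e)>C d(y,e)}|K(y^{-1}x)-K(x)|\,dx\le C$ (and likewise on the right), which follows from the bound $|\nabla K(x)|\lesssim \rho(x)^{-Q-1}$ together with the homogeneous mean value theorem;
\item $L^2$-boundedness, which follows from the Cotlar--Stein almost orthogonality lemma applied to the dyadic pieces $K_j=K\,\chi_{\{2^j\le\rho<2^{j+1}\}}$, corrected so that each piece has mean zero. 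This is the mean-zero property of $\Omega$, and the corrections are smooth cutoffs.
\end{enumerate}
Calder\'on--Zygmund theory on spaces of homogeneous type then gives weak $(1,1)$ bounds. Marcinkiewicz interpolation yields $1<p\le 2$, and duality covers $2\le p<\infty$, since $\checkV{K}$ is again of type $0$.

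\textbf{Step 2 (general $s$).} Convolution commutes with left-invariant operators applied on the right, $X^J(u\ast K)=u\ast X^JK$. What we actually need, however, is $X^J(u\ast K)$ expressed with the derivative landing on $u$. The convolution $u\ast K$ commutes with \emph{right}-invariant fields acting on $u$, whereas $X^J$ is left-invariant. The remedy is to write each left-invariant $X_j$ as a combination of right-invariant fields $Y_k$ with polynomial coefficients, homogeneous of the appropriate degree. This would introduce polynomial weights, so I will avoid it. Instead I will use the following route:
\begin{enumerate}[(i)]
\item Consider a positive Rockland operator (sub-Laplacian power) $\mathcal{L}$ of homogeneous degree $s'$, with $s'$ a multiple of the step. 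By Folland, $\|u\|_{W^{s,p}}\simeq\|u\|_{L^p}+\|\mathcal{L}^{s/s'} u\|_{L^p}$; this is Folland's Theorem 4.10, which I would cite.
\item Write $X^J(u\ast K)=(X^J u)\ast K$ only when $K$ is convolved on the left. So consider $K\ast u$ first, where $X^J(K\ast u)=K\ast X^Ju$ holds exactly, and Step 1 gives the $W^{s,p}$ bound immediately.
\item For $u\ast K$, reduce to (ii) via the inversion $u\ast K=(\checkV{K}\ast\checkV{u})^{\vee}$. This interchanges left- and right-invariant derivatives. The task is then to show that the $W^{s,p}$ norms defined with right-invariant fields are equivalent to those defined with left-invariant fields.
\end{enumerate}

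\textbf{Main obstacle.} The step I expect to be hardest is the equivalence of left- and right-invariant Sobolev norms in Step 2(iii). I would first try to derive it from the characterization $\|u\|_{W^{s,p}}\simeq\|u\|_{L^p}+\|\mathcal{L}^{s/s'}u\|_{L^p}$, writing the remaining pieces as $\mathcal{L}^{s/s'}(u\ast K)=u\ast\mathcal{L}^{s/s'}K$. I would then factor $u\ast\mathcal{L}^{s/s'}K=(u\ast \mathcal{L}^{s/s'}\delta)\ast\tilde K$ using fundamental solutions and Riesz-type kernels, with $\tilde K$ again of type $0$, and conclude with Step 1.
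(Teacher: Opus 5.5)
\textbf{There is a genuine gap in Step 2.}

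Your Step 1 is fine. It is the Knapp--Stein/Calder\'on--Zygmund argument behind Folland's Proposition 1.9; the paper gives no proof of this statement and simply cites Folland.

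The gap is that your route for the operator actually in question, $u\mapsto u\ast K$, passes through a false claim. Item (ii) concerns a different operator, $u\mapsto K\ast u$. Combined with inversion, it only gives boundedness of $u\mapsto u\ast K$ on the Sobolev space built from \emph{right}-invariant fields. Item (iii) then needs the left- and right-invariant $W^{s,p}$ norms to be equivalent, and they are not, already for $s=1$. For $Y\in\g$ write $\widetilde Y f(x):=\frac{d}{dt}f(\exp(tY)x)|_{t=0}$ for the right-invariant field.
\begin{itemize}
\item The paper's norm is invariant under $u\mapsto u\circ\tau_g$.
\item On the other hand, $\widetilde X_1(u\circ\tau_g)=(\widetilde Y u)\circ\tau_g$ with $Y=\mathrm{Ad}_gX_1$. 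For $g=\exp(tX_2)$ in the Cartan group, $Y=X_1-tX_3-\tfrac{t^2}{2}X_5$.
\item Hence $\|\widetilde X_1(u\circ\tau_g)\|_{L^p}=\|\widetilde X_1u-t\widetilde X_3u-\tfrac{t^2}{2}X_5u\|_{L^p}\to\infty$ as $t\to\infty$ for every $0\neq u\in\mathcal{D}(\G)$, since $X_5=\partial_{x_5}$ is central. The left norm stays fixed.
\end{itemize}
So Step 2(iii) cannot be carried out.

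Your last paragraph does not prove that equivalence; it bypasses it. Estimating $\mathcal{L}^{s/s'}(u\ast K)=u\ast\mathcal{L}^{s/s'}K$ directly through a Riesz kernel is the right kind of idea, but as written it has two problems.
\begin{itemize}
\item It needs a kernel $R$ of type $s$ inverting $\mathcal{L}^{s/s'}$, and Folland's composition rule making $R\ast K$ a kernel of type $s$. Both require $s<Q$. For $s\ge Q$ (here $Q=10$) the inverse is in general no longer a kernel of type $s$ (compare the logarithmic terms the paper mentions for $\Delta_{R,2}$), and $R\ast K$ is not under control. The statement, however, is for all $s\in\N$.
\item The characterization $\|u\|_{W^{s,p}}\simeq\|u\|_{L^p}+\|\mathcal{L}^{s/s'}u\|_{L^p}$ that you cite is typically proved by iterating exactly the Sobolev boundedness of type-$0$ convolutions. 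Compare the proof of Proposition \ref{BLsinN}, which iterates Proposition \ref{FR2 zero}. Invoking it here therefore risks circularity.
\end{itemize}

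Both problems disappear with a first-order identity and induction. Let $\Gamma$ be the fundamental solution of $\Delta_\G$, a kernel of type $2$ since $Q>2$. Using $(X_kf)\ast G=f\ast\widetilde X_kG$ (cf.\ \eqref{eq.convspostcose}), one gets $u=(\Delta_\G u)\ast\Gamma=-\sum_k (X_ku)\ast\widetilde X_k\Gamma$, and hence
\[
X_j(u\ast K)=\sum_k (X_ku)\ast K_{jk},\qquad K_{jk}:=-X_j\big((\widetilde X_k\Gamma)\ast K\big),
\]
where each $K_{jk}$ is a kernel of type $0$. Since $\|v\|_{W^{s,p}}\approx\|v\|_{L^p}+\sum_j\|X_jv\|_{W^{s-1,p}}$, induction on $s$ starting from your Step 1 gives the bound for every $s\in\N$. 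Density of $\mathcal{D}(\G)$ then finishes the proof.
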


\begin{prop}\label{prop.kernelopcheck}
Let $K\in\mathcal{D}'(\G)$ be a kernel of type $\mu$. Then,
\begin{enumerate}
\item $\checkV{K}$ is again a kernel of type $\mu$;
\item $X_\ell K, K X_\ell$ are kernels of type $\mu-1$ for any horizontal derivative $X_\ell$, with $\ell=1,\ldots,m_1$;
\item if $\mu>0$, then $K\in L^1_{loc}(\G)$.
\end{enumerate}
\end{prop}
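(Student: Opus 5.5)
The three items rest on a single principle: a kernel of type $\mu$ is a distribution that is homogeneous of degree $\mu-Q$ and smooth away from $e$. Each of the three operations (inversion, horizontal differentiation, restriction to a neighbourhood of $e$) will be checked to preserve smoothness off the origin and to shift the homogeneity degree in the expected way.

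\textbf{Item (1).} I would use two facts about inversion. In exponential coordinates $x^{-1}=-x$, so $x\mapsto x^{-1}$ is a diffeomorphism fixing $e$, and it commutes with dilations: $(\delta_\lambda x)^{-1}=\delta_\lambda(x^{-1})$. It also preserves Lebesgue (Haar) measure. Smoothness of $\ccheck K$ off $e$ is then immediate. For homogeneity, I compute for $\varphi\in\mathcal D(\G)$:
\[
\langle \ccheck K\circ\delta_\lambda|\varphi\rangle=\langle \ccheck K|\lambda^{-Q}\varphi\circ\delta_{\lambda^{-1}}\rangle=\langle K|\lambda^{-Q}\ccheck\varphi\circ\delta_{\lambda^{-1}}\rangle=\lambda^{\mu-Q}\langle K|\ccheck\varphi\rangle=\lambda^{\mu-Q}\langle \ccheck K|\varphi\rangle.
\]

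\textbf{Item (2).} Smoothness of $X_\ell K$ off $e$ is clear. For homogeneity, the key fact is $X_\ell(f\circ\delta_\lambda)=\lambda\,(X_\ell f)\circ\delta_\lambda$, since $X_\ell$ is homogeneous of degree $1$. Using the distributional transpose $\langle X_\ell K|\varphi\rangle=-\langle K|X_\ell\varphi\rangle$, valid because Haar measure is invariant under the right-invariant flows, I get
\[
\langle X_\ell K\circ\delta_\lambda|\varphi\rangle=-\langle K|\lambda^{-Q}(X_\ell(\varphi\circ\delta_{\lambda^{-1}}))\rangle=-\lambda^{-1}\langle K|\lambda^{-Q}(X_\ell\varphi)\circ\delta_{\lambda^{-1}}\rangle=\lambda^{\mu-1-Q}\langle X_\ell K|\varphi\rangle.
\]
So $X_\ell K$ has type $\mu-1$. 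For the right-hand derivative $KX_\ell$, I interpret it via the check operation: $KX_\ell$ corresponds to $\checkV{(X_\ell\ccheck K)}$ up to sign, which is the convention making $(u*K)X_\ell$-type identities valid. It then reduces to the left case combined with item (1).

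\textbf{Item (3).} Here I would use polar coordinates adapted to $\rho$. Off $e$, $K$ is a smooth function with $|K(x)|\le C\rho(x)^{\mu-Q}$, where $C=\max_{\rho=1}|K|$, using homogeneity of degree $\mu-Q$. The volume of the ball $B(e,r)$ equals $r^Q|B(e,1)|$, so summing over dyadic shells gives
\[
\int_{B(e,1)}\rho^{\mu-Q}\,dx\le\sum_{j\ge0}2^{-j(\mu-Q)}\,2^{-jQ}|B(e,1)|=\sum_{j\ge0}2^{-j\mu}|B(e,1)|<\infty,
\]
because $\mu>0$. Hence the function $K|_{\G\setminus\{e\}}$ is in $L^1_{loc}(\G)$. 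It remains to show that the distribution $K$ coincides with this function. Their difference is supported at $e$, so it is a finite combination of derivatives of $\delta_e$. Each term $X^J\delta_e$ is homogeneous of degree $-Q-d(J)\le -Q<\mu-Q$, and the function part is homogeneous of degree $\mu-Q$. Comparing degrees forces all these terms to vanish.

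\textbf{Main obstacle.} I expect item (3) to be the delicate point, specifically the step that excludes the point-supported part using homogeneity degrees. The dyadic integrability estimate itself is routine.
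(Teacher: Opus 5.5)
Your proof is correct. The paper states this proposition without proof, as a standard fact from \cite{folland} (it appeals to Corollary 1.7 there for the local integrability in Theorem \ref{teo.inverselaplHn}), and your argument is essentially that standard one: inversion is $x\mapsto -x$ and commutes with $\delta_\lambda$, horizontal fields are homogeneous of degree $1$, and in (3) the point-supported remainder vanishes because $X^J\delta_e$ is homogeneous of degree $-Q-d(J)<\mu-Q$.

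Only cosmetic points remain. The flows of left-invariant fields are right translations, and what you actually use is that Lebesgue measure is bi-invariant, so the formal transpose of $X_\ell$ is $-X_\ell$. Your dyadic bound loses a harmless factor $2^{Q-\mu}$ when $\mu<Q$. Your reading of $KX_\ell$ through item (1), equivalently as a right-invariant derivative of $K$, is consistent with how such terms appear in \eqref{eq.convspostcose}.
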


\begin{lemma}[see Lemma $3.3$ in \cite{BFP3}]\label{lemma.estimatekernelpol}
Let $K$ be a kernel of type $\mu>0$ and let $\varphi\in \mathcal{D}(\G)$ be a test function. Then, $\varphi\ast K$ is smooth on $\G$.

If $R$ is an homogeneous polynomial of degree $\ell\geq 0$, in the horizontal derivatives, we have 
$$R(\varphi\ast K)(x)=O(|x|^{\mu-Q-\ell})\quad\quad\text{as $x\to\infty$}.$$
In particular, if $K$ is a kernel of type $\mu<Q$, then both $\varphi\ast K$ and all its derivatives belong to $L^\infty(\G)$.
\end{lemma}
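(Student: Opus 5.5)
The statement is local‑regularity plus decay: I will treat smoothness first and then the behaviour at infinity.

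\emph{Smoothness.} Since $\mu>0$, Proposition \ref{prop.kernelopcheck}(3) gives $K\in L^1_{loc}(\G)$. Hence $\varphi\ast K(x)=\int \varphi(y)K(y^{-1}\cdot x)\,\d y$ is well defined. By left invariance, for any left-invariant operator $L$ we have $L(\varphi\ast K)=(L\varphi)\ast K$ in the sense of distributions; I will check this via \eqref{eq.convspostcose} after the change of variables $z=x^{-1}y$. Each $L\varphi\ast K$ is continuous, because it is the convolution of a test function with an $L^1_{loc}$ function. So all derivatives $X^J(\varphi\ast K)$ are continuous, and since $(X^J)_J$ span all left-invariant operators and $X_1,\dots,X_N$ span the tangent space at each point, $\varphi\ast K\in C^\infty$.

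\emph{Decay.} Let $R$ be a homogeneous horizontal polynomial of degree $\ell$. I will write $R(\varphi\ast K)=\varphi\ast (RK)$, letting $R$ act on the kernel in the sense compatible with $u\mapsto u\ast K$, i.e.\ as the right-hand operator appearing in Proposition \ref{prop.kernelopcheck}(2). Iterating Proposition \ref{prop.kernelopcheck}(2) shows that $RK$ is a kernel of type $\mu-\ell$: it is homogeneous of degree $\mu-\ell-Q$ and smooth away from $e$, so $|RK(z)|\le C\rho(z)^{\mu-\ell-Q}$ for $z\neq e$.

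Now fix $\operatorname{supp}\varphi\subset B(e,r)$ and take $\rho(x)>2r$. For $y\in B(e,r)$, the triangle inequality for $d$ gives $\rho(y^{-1}x)=d(x,y)\ge \rho(x)-r\ge\rho(x)/2$. So the integrand stays away from the singularity, and
\[
|R(\varphi\ast K)(x)|\le \|\varphi\|_{L^1}\,C\,(\rho(x)/2)^{\mu-\ell-Q}=O(|x|^{\mu-Q-\ell}).
\]
Here, away from the support, the distributional convolution coincides with the pointwise integral because $RK$ is smooth there, so no distributional subtlety enters.

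\emph{The $L^\infty$ claim.} If $\mu<Q$, every exponent $\mu-Q-\ell$ is negative. So every horizontal derivative of $\varphi\ast K$ is bounded outside a compact set, and it is bounded on that compact set by continuity. For a general derivative $X^J$, with $X_j$ of higher degree, I expand $X_j$ as an iterated commutator of horizontal fields; this writes $X^J$ as a horizontal polynomial of degree $d(J)$, which falls under the previous case.

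\emph{Main obstacle.} The delicate step is the identity $R(\varphi\ast K)=\varphi\ast(RK)$ when $RK$ is no longer locally integrable, i.e.\ when $\mu-\ell\le 0$. I will handle it by using only the region $\rho(x)>2r$ and a cutoff. Split $K=\chi K+(1-\chi)K$, where $\chi$ equals $1$ near $e$ and has small support. Then $\varphi\ast\chi K$ vanishes for large $\rho(x)$, because both factors have compact support, so it contributes nothing to the decay estimate. On the other piece $(1-\chi)K$ is smooth, and differentiation under the integral is classical.
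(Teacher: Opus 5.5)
Your overall strategy is the standard one behind Lemma 3.3 of \cite{BFP3}, which the paper quotes without proof. You write $R(\varphi\ast K)=\varphi\ast RK$ with $RK$ a kernel of type $\mu-\ell$, use the triangle inequality to keep $y^{-1}x$ at distance $\ge\rho(x)/2$ from $e$ when $y\in\operatorname{supp}\varphi$, bound $RK$ there by homogeneity, and handle the singular part of $K$ with a cutoff. That part is correct, and so is the $L^\infty$ consequence obtained by writing higher-layer fields as horizontal commutators. One cosmetic point: if $\mu-\ell-Q\ge 0$ you need the upper bound $\rho(y^{-1}x)\le\rho(x)+r$ rather than the lower one.

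The smoothness paragraph, however, rests on a false identity.
\begin{itemize}
\item For a left-invariant $L$ one has $L(\varphi\ast K)=\varphi\ast LK$, because $x\mapsto y^{-1}x$ is a left translation. This is exactly the identity you use, correctly, in the decay step.
\item It is not true that $L(\varphi\ast K)=(L\varphi)\ast K$. Moving a left-invariant field onto the first factor turns it into its right-invariant counterpart: $(X\varphi)\ast K=\varphi\ast\tilde X K$, where $\tilde X g(w)=\frac{d}{dt}g(\exp(tX)\cdot w)$ at $t=0$. On a non-abelian group such as $\G$, $\tilde X\neq X$ in general (e.g.\ for $X=X_2$, which is not central).
\item This is what \eqref{eq.convspostcose} encodes: the operator on the right carries a check. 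So the verification you planned via that formula would not give your claim.
\item Your two identities together would also force $(L\varphi)\ast K=\varphi\ast LK$ for every $\varphi$, which fails.
\end{itemize}

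The conclusion is still true, and the repair is short. You can use right-invariant fields $Y$ instead: $Y(\varphi\ast K)=(Y\varphi)\ast K$ is then continuous, and these fields also span the tangent space at every point. Alternatively, write $(\varphi\ast K)(x)=\int_\G\varphi(xz^{-1})K(z)\,dz$ and differentiate under the integral sign in Euclidean coordinates. This uses $K\in L^1_{loc}$ and the fact that, for $x$ in a compact set, the integrand is supported in a fixed compact set of $z$. You could also simply quote that the convolution of a test function with a distribution is smooth.
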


Let $1\leq p\leq\infty$. We recall that the \textit{weak-$L^p$ space} $L^{p,\infty}(\G)$ is defined as the space of all measurable functions $f$ on $\G$ such that
$$\sup_{t>0}\,\lambda_f^p(t)<\infty,$$
where $\lambda_f(t):=\mathcal{L}^N(\{|f|>t\})$ is the distribution function of $f$.

\begin{lemma}[see Lemma $3.9$ in \cite{BFP3}]\label{lemma.stimaconvL1supp}
Let $K$ be a kernel of type $\mu\in (0,Q)$. Then, for every $f\in L^1(\G)$, we have $f\ast K\in L^{Q/(Q-\mu),\infty}(\G)$ and there exists $C>0$ such that
$$\|f\ast K\|_{L^{Q/(Q-\mu),\infty}(\G)}\leq C\|f\|_{L^1(\G)}.$$
In particular, if $1\leq p<Q/(Q-\mu)$, then $f\ast K\in L^p_{loc}(\G)\subseteq L^1_{loc}(\G)$.

Moreover, if $\psi\in \mathcal{D}(\G)$ such that $\psi\equiv 1$ in a neighborhood of the origin, we have the same results if we replace $K$ with $\psi K$ or $(1-\psi)K$.
\end{lemma}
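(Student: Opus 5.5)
The weak-type bound is a Hardy--Littlewood--Sobolev estimate at the endpoint $p=1$, and I would derive it from a pointwise bound on $K$ together with a splitting of the kernel at a level depending on $t$. Since $K$ is homogeneous of degree $\mu-Q$ and smooth away from the origin, setting $M:=\max_{\rho(y)=1}|K(y)|$ gives
$$|K(y)|\leq M\rho(y)^{\mu-Q}\quad\text{for all }y\neq e.$$
As $\mu>0$, Proposition~\ref{prop.kernelopcheck}(3) shows that $K$ is locally integrable and agrees with this function. By the homogeneity of Lebesgue measure under dilations, $|B(e,r)|=c\,r^Q$. Fix $t>0$ and a radius $r>0$ to be chosen, and split
$$K=K\mathbbm{1}_{B(e,r)}+K\mathbbm{1}_{B(e,r)^c}=:K_1+K_2 .$$

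For $K_1$, integrating in polar coordinates adapted to the dilations gives $\|K_1\|_{L^1}\le C r^{\mu}$ (this uses $\mu>0$). Minkowski's inequality then yields $\|f\ast K_1\|_{L^1}\le Cr^\mu\|f\|_{L^1}$. For $K_2$ we have $\|K_2\|_{L^\infty}\le M r^{\mu-Q}$ (this uses $\mu<Q$), so $\|f\ast K_2\|_{L^\infty}\le Mr^{\mu-Q}\|f\|_{L^1}$. Choose $r$ so that $Mr^{\mu-Q}\|f\|_{L^1}=t/2$. Then $\{|f\ast K|>t\}\subseteq\{|f\ast K_1|>t/2\}$, and Chebyshev's inequality gives
$$\lambda_{f\ast K}(t)\le \frac{2}{t}\,Cr^\mu\|f\|_{L^1}= C'\Big(\frac{\|f\|_{L^1}}{t}\Big)^{Q/(Q-\mu)} ,$$
because $r=(2M\|f\|_{L^1}/t)^{1/(Q-\mu)}$. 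This is exactly the weak $L^{Q/(Q-\mu)}$ bound. The convolution exists a.e. because $f\ast K_1\in L^1$ and $f\ast K_2\in L^\infty$ are both absolutely convergent integrals.

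For local integrability, a weak $L^s$ function with $s=Q/(Q-\mu)$ lies in $L^p$ of every bounded set when $p<s$. This follows from the layer-cake formula: $\int_B|g|^p=\int_0^\infty pt^{p-1}\min(|B|,\lambda_g(t))\,dt$, which is finite since $\lambda_g(t)\lesssim t^{-s}$ and $p<s$.

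For the cutoffs $\psi K$ and $(1-\psi)K$, note that $0\le\psi\le1$ can be assumed after bounding $|\psi|$ by a constant. Then $|\psi K|,|(1-\psi)K|\le C\rho^{\mu-Q}$, and the argument above used only this pointwise majorant of the kernel. So the same proof applies verbatim, with a constant depending on $\|\psi\|_\infty$.

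The only delicate step is the polar-coordinates computation $\int_{B(e,r)}\rho^{\mu-Q}\,dy\le Cr^\mu$. I would handle it by summing over dyadic shells $B(e,2^{-k}r)\setminus B(e,2^{-k-1}r)$, each of which has measure $c(2^{-k}r)^Q$ and carries the bound $\rho^{\mu-Q}\lesssim (2^{-k}r)^{\mu-Q}$. The resulting geometric series in $2^{-k\mu}$ converges because $\mu>0$.
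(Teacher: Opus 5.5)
Your proof is correct. The paper gives no proof of this lemma; it imports it from \cite{BFP3}.

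The standard argument runs as follows. The majorant $|K|\le M\rho^{\mu-Q}$ places $K$ in $L^{Q/(Q-\mu),\infty}(\G)$. One then applies the weak Young inequality $\|f\ast g\|_{L^{s,\infty}}\le C_s\|f\|_{L^1}\|g\|_{L^{s,\infty}}$, valid for $1<s<\infty$. This yields the $p=1$ endpoint of the Hardy--Littlewood--Sobolev estimate of Proposition~\ref{teo.contofconvtypeop}.

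Your splitting of $K$ at the $t$-dependent radius $r=(2M\|f\|_{L^1}/t)^{1/(Q-\mu)}$ is exactly the proof of that weak Young inequality, specialized to $g=\rho^{\mu-Q}$. One piece has $L^1$ norm $O(r^{\mu})$, the other has sup norm $O(r^{\mu-Q})$, and the superlevel sets of $\rho^{\mu-Q}$ are the balls $B(e,r)$. So the two routes agree in substance. Yours has the merit of being self-contained, including the dyadic computation $\int_{B(e,r)}\rho^{\mu-Q}\le Cr^{\mu}$ and the a.e.\ absolute convergence of $f\ast K$.

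Writing it out also shows that only the pointwise majorant is used. Hence the statements for $\psi K$ and $(1-\psi)K$ hold for any bounded $\psi$, and the hypothesis $\psi\equiv 1$ near $e$ is not needed for this lemma.

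One cosmetic point: you do not need to assume $0\le\psi\le 1$. Just write $|\psi K|\le\|\psi\|_{L^\infty}M\rho^{\mu-Q}$ and $|(1-\psi)K|\le(1+\|\psi\|_{L^\infty})M\rho^{\mu-Q}$.
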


\medskip

\subsection{Beppo Levi-Sobolev spaces $BL^{s,p}(\mathbb G)$}\label{sec:BL2}
Let $\{X_1,\dots,X_{m_1}\}$ be an orthonormal basis of the horizontal layer $V_1$ of $\mathfrak g$. We denote by $\Delta_\G$ the positive
\emph{horizontal sub-Laplacian} 
$$
\Delta_\G := -\sum_{j=1}^{m_1}X_j^2.
$$
Let $s\in \N$, $s\ge 1$. If
$X^J$ is a homogeneous differential operator in the horizontal derivatives of degree $d(J)=s$,
  we can consider the distributional derivative of order $s$,  $X^Ju$ of a function $u$.

The vector of horizontal first derivatives is denoted by
$$\nabla_\G u:=(X_1 u,\ldots, X_{m_1}u).$$
If $s\ge 2$ it is convenient to denote by

\begin{equation}\label{Gradient}
|\nabla_\G^s u|:=\left(\sum_{J\,:\,d(J)=s}\,|X^J u|^2\right)^{\frac{1}{2}},
\end{equation}
and by $\|\nabla_{\G}^s u\|_{L^p}$ its $L^p$-norm.

The {horizontal first Beppo Levi space} $BL^{1,p}(\G)$ (also known as the \emph{homogeneous Sobolev space}) is the closure of $\mc D(\G)$ in the $BL^{1,p}$-norm 
\begin{align*}
\|u\|_{BL^{1,p}}= \|\nabla_{\G} u\|_{L^p}.
\end{align*}
By the Sobolev inequality with sharp exponents in Carnot groups (see e.g., \cite{Lu94}) one easily obtain the following result.

\begin{proposition}
    [see Theorem 8.4 in \cite{BFP6}]\label{embedding} 
If $1<p<Q$, a function $u$ belongs to $BL^{1,p}(\G)$ if and only if 
$$
u\in L^{pQ/(Q-p)}(\G)\quad\mbox{and}\quad X_j u\in L^p(\G),\,\forall j=1,\ldots,m_1,
$$
and the two norms are equivalent, i.e.,
$$
\|u\|_{BL^{1,p}(\G)}\approx \|u\|_{L^{pQ/(Q-p)}(\G)} + \sum_{j=1}^{m_1} \|X_j u\|_{L^p(\G)}. 
$$
In particular,
the embedding $BL^{1,p}(\G)\subset L^{pQ/(Q-p)}(\G)$ is continuous.
\end{proposition}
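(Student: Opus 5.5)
The plan is to prove the two inclusions separately and then get the norm equivalence from the constants that appear along the way. Throughout, $1<p<Q$ and $p^*:=pQ/(Q-p)$.

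\emph{First inclusion.} Let $u\in BL^{1,p}(\G)$. By definition there are $u_k\in\mathcal D(\G)$ with $\nabla_\G u_k\to \nabla_\G u$ in $L^p$, i.e. $(u_k)$ is Cauchy in the $BL^{1,p}$-norm. For test functions, the sharp Sobolev inequality on Carnot groups (\cite{Lu94}) gives
\[
\|\varphi\|_{L^{p^*}}\le C\|\nabla_\G\varphi\|_{L^p},\qquad \varphi\in\mathcal D(\G).
\]
This can also be derived directly: write $\varphi=\sum_j X_j\varphi\ast K_j$, where each $K_j$ is obtained by applying a horizontal derivative to the fundamental solution of $\Delta_\G$, hence is a kernel of type $1$ by Proposition \ref{prop.kernelopcheck}; then apply Proposition \ref{teo.contofconvtypeop} with $\mu=1$. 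Applied to $u_k-u_l$, the inequality shows that $(u_k)$ is Cauchy in $L^{p^*}$, so $u_k\to v$ in $L^{p^*}$. Distributional limits identify $v$ with $u$ (one realises the abstract completion as this $L^{p^*}$ limit) and give $X_jv=\lim X_ju_k\in L^p$. Passing to the limit yields $\|u\|_{L^{p^*}}\le C\|\nabla_\G u\|_{L^p}$, which is the continuous embedding and the nontrivial half of the norm equivalence.

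\emph{Second inclusion.} Let $u\in L^{p^*}$ with $X_ju\in L^p$ for all $j$; we must approximate $u$ by test functions in the $BL^{1,p}$-norm. This is the main obstacle. We proceed in two steps.

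\emph{Step 1: cut-off.} Fix a cutoff $\chi_R=\chi(\delta_{1/R}\cdot)$ with $\chi\equiv1$ on $B(e,1)$ and support in $B(e,2)$. Then
\[
\nabla_\G(\chi_Ru)-\nabla_\G u=(\chi_R-1)\nabla_\G u+u\,\nabla_\G\chi_R .
\]
The first term tends to $0$ in $L^p$ by dominated convergence. For the second, $|\nabla_\G\chi_R|\le C/R$ is supported in the annulus $A_R=B(e,2R)\setminus B(e,R)$, which has measure $\sim R^Q$. Hölder's inequality with exponents $p^*/p$ and $Q/p$ (note $1/p^*+1/Q=1/p$) gives
\[
\|u\nabla_\G\chi_R\|_{L^p}\le C R^{-1}\|u\|_{L^{p^*}(A_R)}\,|A_R|^{1/Q}\le C\|u\|_{L^{p^*}(A_R)}\to0.
\]
This is exactly where the hypothesis $u\in L^{p^*}$ is needed rather than mere finiteness of $\|\nabla_\G u\|_{L^p}$.

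\emph{Step 2: mollification.} Mollify $\chi_Ru$ by group convolution with an approximate identity $\rho_\eps$. Since $X_j(\rho_\eps\ast f)=\rho_\eps\ast X_jf$ for left-invariant $X_j$ (with the convolution placed in the order matching the identity $L(f\ast g)=f\ast Lg$ stated earlier), we get smooth compactly supported functions converging in $W^{1,p}$-type horizontal norms; alternatively, use Theorem \ref{teo.denseSobspHn} directly, since $\chi_Ru\in W^{1,p}(\G)$ by compact support. A diagonal argument then gives $u\in BL^{1,p}(\G)$, with the trivial bound $\|\nabla_\G u\|_{L^p}\le$ the right-hand side, completing the equivalence.
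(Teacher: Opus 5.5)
Your argument is correct and fills in what the paper only indicates (it cites Theorem 8.4 of \cite{BFP6} and the sharp Sobolev inequality). For the embedding, the Sobolev inequality $\|\varphi\|_{L^{pQ/(Q-p)}}\le C\|\nabla_\G\varphi\|_{L^p}$ on test functions realizes $BL^{1,p}(\G)$ inside $L^{pQ/(Q-p)}(\G)$; for the converse, you use the standard density argument with cut-offs $\chi_R$ (whose horizontal gradients are bounded in $L^Q$) followed by left mollification. The one point to state explicitly is that your approximants also converge to $u$ in $L^{pQ/(Q-p)}$ (equivalently, a function in $L^{pQ/(Q-p)}$ with vanishing horizontal gradient is zero), so the element of the completion you construct is $u$ itself.
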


We need to have a notion of Beppo Levi Sobolev space of higher order.
Following Definition $4.9$ in \cite{FR2}, we can give  a very general definition of {homogeneous Beppo Levi-Sobolev spaces} on $\G$, holding for any non negative real number $s$.
\begin{defi}\label{defi.BLG}
Let $s\geq 0$. If $1< p<\infty$, we define the space $BL^{s,p}(\G)$ as the completion of $\mathcal{D}(\G)$ with respect to the norm
$$\|u\|_{BL^{s,p}(\G)}:=\|(\Delta_\G)_p^{s/2}u\|_{L^p(\G)},$$
where $(\Delta_\G)_p^{s/2}$ is the realization of $(\Delta_\G)^{s/2}$ in $L^p(\G)$ defined following \cite{folland}.
The resulting space is a reflexive Banach space.
\end{defi}

\begin{proposition}[Proposition $4.17$ in \cite{FR2}]\label{FR2 zero}
If $K$ is a kernel of type $0$, $1<p<\infty$, $s\geq 0$, then the mapping $\,u\mapsto u\ast K$ defined for $u\in \mathcal{D}(\G)$ extends to a bounded operator on $BL^{s,p}(\G)$.
\end{proposition}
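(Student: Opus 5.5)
The plan is to reduce the statement to an $L^p$-bound for a single operator commuting with the group structure. Since $\mathcal{D}(\G)$ is dense in $BL^{s,p}(\G)$ by definition, it suffices to prove
\[
\|(\Delta_\G)_p^{s/2}(u\ast K)\|_{L^p(\G)}\le C\|(\Delta_\G)_p^{s/2}u\|_{L^p(\G)}\qquad\text{for all } u\in\mathcal{D}(\G).
\]
The map then extends by continuity to the completion. One point has to be settled first: $u\ast K$ need not be a test function, so $(\Delta_\G)_p^{s/2}(u\ast K)$ must be given a meaning. By Proposition~\ref{folland zero}, $u\ast K\in W^{k,p}(\G)$ for every $k$. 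Hence $u\ast K$ lies in the domain of $(\Delta_\G)_p^{s/2}$ in Folland's sense, and it is approximated in the graph norm by test functions (Theorem~\ref{teo.denseSobspHn}). In this way $u\ast K$ defines an element of $BL^{s,p}(\G)$ as soon as the estimate above holds.

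The key step is a commutation identity. The operator $(\Delta_\G)^{s/2}$ is left-invariant, since $\Delta_\G$ is a left-invariant sub-Laplacian and the fractional power is defined by functional calculus from its left-invariant heat semigroup $e^{-t\Delta_\G}f=f\ast h_t$. Convolution on the right by $K$ commutes with left-invariant operators acting on the left factor, so we expect
\[
(\Delta_\G)^{s/2}(u\ast K)=\bigl((\Delta_\G)^{s/2}u\bigr)\ast K.
\]
I would check this at the level of the heat semigroup, using associativity of convolution: $(u\ast K)\ast h_t$ and $(u\ast h_t)\ast K$ agree, because both equal $u\ast(K\ast h_t)$ with $h_t$ Schwartz. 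Then I would pass to $(\Delta_\G)^{s/2}$ through a Balakrishnan-type formula $(\Delta_\G)^{s/2}=c\int_0^\infty t^{-s/2-1}(I-e^{-t\Delta_\G})^m\,dt$ with $m>s/2$. This formula converges in $L^p$ on the domain, so the identity transfers.

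Granting the identity, Proposition~\ref{folland zero} with $s=0$ (boundedness of type-$0$ kernels on $L^p$) gives
\[
\|((\Delta_\G)^{s/2}u)\ast K\|_{L^p}\le C\|(\Delta_\G)^{s/2}u\|_{L^p}.
\]
Here $(\Delta_\G)^{s/2}u\in L^p$ but is not compactly supported, so the convolution is understood as the $L^p$-extension of the operator. The commutation must therefore be stated for that extension, which is consistent by density of $\mathcal{D}$ in $L^p$ and continuity of both sides.

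I expect the commutation step to be the main obstacle. For $s$ not an even integer, $(\Delta_\G)^{s/2}$ is nonlocal and $K$ is only a principal-value distribution. Justifying the interchange therefore requires care with convergence of the integral representation and with the $L^p$-extension of convolution by $K$. For $s=2k$ the identity is immediate, since $\Delta_\G^k$ is a left-invariant differential operator and $L(f\ast g)=f\ast Lg$ applies to the right factor after writing $u\ast K$ with the roles suitably arranged, or by duality with $\checkV{K}$. This suggests an alternative route if the general case is troublesome: prove the even-integer case, then interpolate in $s$ using complex interpolation of the $BL^{s,p}$ scale via the imaginary powers $(\Delta_\G)^{i\tau}$, which are themselves convolution operators with type-$0$ kernels.
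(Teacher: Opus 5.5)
There is a genuine gap. The commutation identity $(\Delta_\G)^{s/2}(u\ast K)=\bigl((\Delta_\G)^{s/2}u\bigr)\ast K$, on which both of your routes rest, is false in general, already for $s=2$.

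With the convention $(f\ast g)(x)=\int f(y)g(y^{-1}x)\,dy$, the operator $u\mapsto u\ast K$ commutes with left translations, hence with \emph{right}-invariant operators. But $\Delta_\G$ is left-invariant and is itself a right convolution: $\Delta_\G u=u\ast\Delta_\G\delta$ and $e^{-t\Delta_\G}u=u\ast h_t$. Two right convolutions commute only if their kernels commute. Your associativity step actually gives $(u\ast h_t)\ast K=u\ast(h_t\ast K)$, not $u\ast(K\ast h_t)$, and on the non-abelian Cartan group $h_t\ast K\neq K\ast h_t$ in general.

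Concretely, take the type-$0$ kernel $K=X_1R_1$ from the proof of Proposition~\ref{BLsinN}, so that $u\ast K=X_1(\Delta_\G)^{-1/2}u$. Then
\[
\Delta_\G(u\ast K)-(\Delta_\G u)\ast K=[\Delta_\G,X_1](\Delta_\G)^{-1/2}u=(X_2X_3+X_3X_2)(\Delta_\G)^{-1/2}u,
\]
which does not vanish identically. So the even-integer case is not immediate either. The rule $L(f\ast g)=f\ast Lg$ puts $L$ on $K$, not on $u$, and duality with $\checkV{K}$ just reproduces the same non-commutation. The interpolation fallback inherits this error. Your argument would be correct for the left convolution $u\mapsto K\ast u$, which does commute with $\Delta_\G$ and its functional calculus, but that is not the operator in the statement.

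The missing idea is that conjugating by powers of $\Delta_\G$ produces a \emph{different} type-$0$ kernel, not the same one. Set $Tu=u\ast K$. You need $(\Delta_\G)^{s/2}(u\ast K)=\bigl((\Delta_\G)^{s/2}u\bigr)\ast K_s$, where $u\mapsto u\ast K_s$ is $(\Delta_\G)^{s/2}\circ T\circ(\Delta_\G)^{-s/2}$. Formally $K_s=R_s\ast K\ast\kappa_s$, with $R_s$ and $\kappa_s$ the kernels of $(\Delta_\G)^{-s/2}$ and $(\Delta_\G)^{s/2}$.

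Homogeneity of degree $-Q$ follows from scaling. Making sense of this composition, and showing that $K_s$ is smooth away from the origin (i.e.\ a kernel of type $0$), requires a composition lemma for homogeneous kernels. For $s\ge Q$ you would iterate in steps $s/m<Q$, where the Riesz kernels are genuine kernels of type $s/m$. Once that is in place, the $L^p$-boundedness of type-$0$ operators and your density/extension argument (which is fine) finish the proof.

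For integer $s$ you could instead write $X^JK$ as a sum of right-invariant derivatives of order $s$ of type-$0$ kernels. But then you would also need an independent proof of the equivalence in Proposition~\ref{BLsinN}, since the paper derives it from the very statement you are proving.
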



We want to prove a characterization of $BL^{s,p}(\G)$ when $s\in\N$, making explicit the norm $\|\cdot\|_{BL^{s,p}(\G)}$ in terms of horizontal derivatives (i.e. to $|\nabla_\G^s\cdot|$ defined in \eqref{Gradient}).

\begin{prop}\label{BLsinN}
If $s\in\N$, then the map $u\mapsto\|\nabla_{\G}^s  u\|_{L^p(\G)}$
is a norm on $BL^{s,p}(\G)$ equivalent to the $BL^{s,p}(\G)$-norm in Definition \ref{defi.BLG}.
\end{prop}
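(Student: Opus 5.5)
Since both quantities are defined on $\mathcal{D}(\G)$ and $BL^{s,p}(\G)$ is the completion of $\mathcal{D}(\G)$, the plan is to prove two-sided estimates
\begin{equation*}
C^{-1}\|(\Delta_\G)_p^{s/2}u\|_{L^p(\G)}\le \|\nabla_\G^s u\|_{L^p(\G)}\le C\|(\Delta_\G)_p^{s/2}u\|_{L^p(\G)},\qquad u\in\mathcal D(\G).
\end{equation*}
By density these extend to the whole space. On the completion, $\nabla_\G^s u$ is then defined as the $L^p$-limit of $\nabla_\G^s u_k$ for an approximating sequence $u_k\in\mathcal D(\G)$, and it agrees with the distributional derivative. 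The right inequality shows the map is well defined and continuous; the left one shows it is a norm, since it vanishes only when the $BL^{s,p}$-norm vanishes.

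\emph{Upper bound.} Fix $J$ with $d(J)=s$ in the horizontal derivatives, so $X^J$ is a homogeneous left-invariant operator of degree $s$. By Folland's theory \cite{folland}, the Riesz potential $(\Delta_\G)^{-s/2}$ (defined when $0<s<Q$; general $s$ is treated below) is convolution with a kernel $I_s$ of type $s$. For $u\in\mathcal D(\G)$ we write $u=(\Delta_\G)^{s/2}u\ast I_s$; the right side is only defined once this identity is justified. Then
\begin{equation*}
X^Ju=(\Delta_\G)^{s/2}u\ast X^JI_s ,
\end{equation*}
and $X^JI_s$ is a kernel of type $0$ by repeated use of Proposition \ref{prop.kernelopcheck}. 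Since type-$0$ kernels are bounded on $L^p$ for $1<p<\infty$ (Proposition \ref{folland zero} with $s=0$), we get $\|X^Ju\|_{L^p}\le C\|(\Delta_\G)^{s/2}_pu\|_{L^p}$. Summing over $J$ gives the bound.

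\emph{Lower bound.} When $s=2k$ is even, $(\Delta_\G)^{k}=\bigl(-\sum_j X_j^2\bigr)^k$ is a finite linear combination of the $X^J$ with $d(J)=s$, and the bound is immediate. When $s$ is odd, write $(\Delta_\G)^{s/2}=(\Delta_\G)^{-1/2}\Delta_\G^{(s+1)/2}$ and expand $\Delta_\G^{(s+1)/2}=-\sum_j (\Delta_\G)^{(s-1)/2}X_j X_j$. Choosing orderings so that one $X_j$ is placed first, we get
\begin{equation*}
(\Delta_\G)^{s/2}u=\sum_{J} (X^{J}u)\ast X_{j(J)}\,\widetilde{I}_1 ,
\end{equation*}
where $\widetilde I_1$ is the kernel of $(\Delta_\G)^{-1/2}$, of type $1$. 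Its horizontal derivatives, composed on the appropriate side, are kernels of type $0$. Bounding each term with Proposition \ref{folland zero} gives $\|(\Delta_\G)^{s/2}_pu\|_{L^p}\le C\sum_{d(J)=s}\|X^Ju\|_{L^p}$.

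\emph{Main obstacle.} The delicate points are all in justifying the representation formulas. First, one must check that $(\Delta_\G)_p^{s/2}u$ (Folland's realization) coincides with the functional-calculus object, and that $u=(\Delta_\G)^{s/2}u\ast I_s$ holds for test functions. This uses that $(\Delta_\G)^{s/2}u$ decays at infinity fast enough, which follows from Lemma \ref{lemma.estimatekernelpol}. Second, one must justify moving left-invariant derivatives across convolutions, which is done via \eqref{eq.convspostcose}. Third, the Riesz kernel requires $s<Q$. For $s\ge Q$ I would avoid $I_s$ altogether and factor through the odd/even reduction above: write $X^J=X^{J'}X^{J''}$ and iterate the first-order identity $X_\ell v=(\Delta_\G)^{1/2}v\ast X_\ell\widetilde I_1$, commuting the left-invariant operators so that only type-$0$ kernels appear.

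Alternatively, these steps could be replaced by quoting the characterization of integer-order homogeneous Sobolev spaces in \cite{FR2}. I would try that first, and fall back on the kernel computations above if the cited characterization does not cover this case.
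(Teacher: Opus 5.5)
Your route is different from the paper's, and it has a genuine gap in the upper bound for $s\ge Q$ (recall $Q=10$ and the statement covers every $s\in\N$). Your lower bound is sound for every $s$: the even case is trivial, and for odd $s$ you write $(\Delta_\G)^{s/2}u=(\Delta_\G)^{(s+1)/2}u\ast R_1$ and move one $X_j$ onto $R_1$ via \eqref{eq.convspostcose}. Your upper bound through the type-$0$ kernels $X^J I_s$ also works for $0<s<Q$, modulo the justifications you list.

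For $s\ge Q$ you propose to iterate $X_\ell v=(\Delta_\G)^{1/2}v\ast X_\ell R_1$ and ``commute the left-invariant operators''. That step fails in a non-abelian group. $X_\ell$ does not commute with $\Delta_\G$, hence not with $(\Delta_\G)^{1/2}$. Likewise, right convolution by a kernel does not commute with $(\Delta_\G)^{1/2}$: one has $((\Delta_\G)^{1/2}f)\ast X_\ell R_1=X_\ell f$, whereas $(\Delta_\G)^{1/2}(f\ast X_\ell R_1)=(\Delta_\G)^{1/2}X_\ell(\Delta_\G)^{-1/2}f$.

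After one step you are left with $\|X^Ju\|_{L^p}\le C\|(\Delta_\G)^{1/2}X^{J'}u\|_{L^p}$. Bounding the right-hand side by $\|(\Delta_\G)^{s/2}u\|_{L^p}$ amounts to the $L^p$-boundedness of $(\Delta_\G)^{1/2}X^{J'}(\Delta_\G)^{-s/2}$. That does not follow from $L^p$-boundedness of type-$0$ kernels alone. Moreover, for $s\ge Q$ there is no homogeneous Riesz kernel $I_s$ with which to write this operator down.

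The missing ingredient is what the paper uses: Proposition \ref{FR2 zero}, which says type-$0$ kernels are bounded on $BL^{t,p}(\G)$ for every $t\ge 0$, not only on $L^p(\G)$. With it,
$$\|X_\ell v\|_{BL^{t,p}(\G)}=\|(\Delta_\G)^{1/2}v\ast X_\ell R_1\|_{BL^{t,p}(\G)}\le C\|(\Delta_\G)^{1/2}v\|_{BL^{t,p}(\G)}=C\|v\|_{BL^{t+1,p}(\G)}.$$
Your iteration then becomes legitimate:
$$\|X^Ju\|_{L^p}\le C\|X^{J'}u\|_{BL^{1,p}(\G)}\le\cdots\le C\|u\|_{BL^{s,p}(\G)},$$
with no restriction $s<Q$. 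The paper proves both directions this way. It shows $\|u\|_{BL^{s,p}(\G)}\approx\sum_i\|X_iu\|_{BL^{s-1,p}(\G)}$ and inducts on $s$, using only the first-order kernels $X_iR_1$ and never $I_s$.

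Your direct approach needs only $L^p$-boundedness and is self-contained for $s<Q$, which covers the orders $s\le 6$ actually used later in the paper. It cannot reach $s\ge Q$ without either this $BL$-boundedness or a separate analysis of the (logarithmic) kernels of $(\Delta_\G)^{-s/2}$. Your fallback of quoting \cite{FR2} should point to this boundedness result specifically, and you still need the induction on top of it.
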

\begin{proof}
Let $u\in \mathcal{D}(\G)$. Let us prove at first that for every $s\geq 1$
$$\|u\|_{BL^{s,p}(\G)}\quad\approx\quad\sum_{i=1}^{m_1}\|X_i u\|_{BL^{s-1,p}(\G)}.$$
Following Theorem $4.10$ in \cite{folland}, if $u\in \mathcal{D}(\G)$ and $X_i\in V_1$, we have
$$X_i u=(\Delta_\G)^{1/2}u\ast X_i R_1,$$
where $R_1$ is the Riesz kernel associated to the sub-Laplacian $\Delta_\G$ defined in Proposition $3.17$ in \cite{folland}, and $X_i R_1$ is a kernel of type $0$. Thus, by Proposition \ref{FR2 zero}, we have
\begin{align*}
\|X_i u\|_{BL^{s-1,p}(\G)}&=\|(\Delta_\G)^{1/2}u\ast X_i R_1\|_{BL^{s-1,p}(\G)}\\
&\leq c_p \|(\Delta_\G)^{1/2}u\|_{BL^{s-1,p}(\G)} =c_p \|u\|_{BL^{s,p}(\G)}.
\end{align*}
On the other hand, 
$$(\Delta_\G)^{1/2}u=-\sum_{i=1}^{m_1}(X_i u)\ast \Delta_\G(K_i \ast R_1),$$
where $\Delta_\G(K_i \ast R_1)$ is a kernel of type $0$. Again, by Proposition \ref{FR2 zero}, we have
$$\|u\|_{BL^{s,p}(\G)}=\|(\Delta_\G)^{1/2}u\|_{BL^{s-1,p}(\G)}\leq c_p \sum_{i=1}^{m_1}\|X_i u\|_{BL^{s-1,p}(\G)}.$$
We have completed the proof since
$$\|u\|_{BL^{s,p}(\G)}\approx \sum_{i=1}^{m_1}\|X_i u\|_{BL^{s-1,p}(\G)} \approx\cdots\approx\sum_{J\,:\,d(J)=s}\|X^J u\|_{L^p(\G)}.$$ 
\end{proof}

By Proposition \ref{BLsinN}, from now on, if $s\in\N$,  $BL^{s,p}(\G)$ can be seen as the closure of $\mathcal{D}(\G)$ with respect to the norm
$$
\| u\|_{\BL{s}{p}} := \|\nabla_\G^s u\|_{L^p(\G)}.
$$
As a consequence of Hardy-Littlewood-Sobolev inequalities for Beppo Levi spaces proved in Theorem $4.24$ in \cite{FR2}, we have the following characterization that generalizes Proposition \ref{embedding} (for an explicit computation, we also refer  to Corollary $1.4.6$ in \cite{Imperato08}).
\begin{teo}\label{car_spazio} If $s\in\N, s\geq 1,\,1<p<Q$ and $ps<Q$, we have that
$$u \in \,\BL{s}{p} \quad\mbox{if and only if}\quad \nabla_{\G}^ju \in L^{\frac{pQ}{Q-(s-j)p}}({\mathbb G}),\,\,\text{for $j=0,\cdots,s$}$$
and the two norms are equivalent, i.e.
\begin{equation}\label{normaLu}
\|u\|_{\BL{s}{p}}\,\,\approx\,\, \|u\|_{L^{pQ/(Q-sp)}(\G)} + \sum_{j=1}^{s} \|\nabla_\G^j u\|_{L^{\frac{pQ}{Q-(s-j)p}}({\mathbb G})}.
\end{equation}
In particular, the embedding $\BL{s}{p}\hookrightarrow L^{pQ/(Q-sp)}(\G)$ is continuous.
\end{teo}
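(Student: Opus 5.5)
We prove the two inclusions separately. By density of $\mathcal{D}(\G)$, the main work is to establish the norm equivalence \eqref{normaLu} for test functions.

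\emph{Direct inclusion.} For $u\in\mathcal{D}(\G)$ and $0\le j\le s$, we show
\[
\|\nabla_\G^j u\|_{L^{pQ/(Q-(s-j)p)}}\le C\|\nabla_\G^s u\|_{L^p}.
\]
For $j=s$ this is trivial. For $j<s$, we write each $X^J u$ with $d(J)=j$ as a convolution. Repeatedly using the reproducing formula from the proof of Proposition \ref{BLsinN},
\[
u=\sum_{d(I)=s-j} (X^I X^J u)\ast K_{I},
\]
where each $K_I$ is a kernel of type $s-j$. Concretely, we iterate $X_iu=(\Delta_\G)^{1/2}u\ast X_iR_1$ and its inverse $(\Delta_\G)^{1/2}u=-\sum_i (X_iu)\ast\Delta_\G(K_i\ast R_1)$, and we use that convolving kernels of types $\mu_1,\mu_2$ with $\mu_1+\mu_2<Q$ gives a kernel of type $\mu_1+\mu_2$ (Folland). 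Applying this with $X^J u$ in place of $u$ expresses $X^J u$ through derivatives of total degree $s$, convolved with kernels of type $s-j$. Proposition \ref{teo.contofconvtypeop} with $\mu=s-j$ then gives the bound, since $1<p<Q/(s-j)$ follows from $sp<Q$ and $\frac1q=\frac1p-\frac{s-j}{Q}$. Passing to the limit shows that every $u\in\BL{s}{p}$, being a limit of test functions in the $\|\nabla^s_\G\cdot\|_{L^p}$ norm, has $\nabla_\G^j u$ converging in $L^{pQ/(Q-(s-j)p)}$. The distributional limits agree, so the estimate holds on all of $\BL{s}{p}$. In particular $u\in L^{pQ/(Q-sp)}$.

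\emph{Converse.} Let $u$ satisfy $\nabla^j_\G u\in L^{q_j}$, with $q_j=pQ/(Q-(s-j)p)$. We must show that $u$ lies in the closure of $\mathcal{D}(\G)$ for $\|\nabla^s_\G\cdot\|_{L^p}$. We first mollify, $u_\eps=u\ast\phi_\eps$; this preserves all the hypotheses and converges in each norm. We then cut off with $\chi_R=\chi(\delta_{1/R}\cdot)$. By Leibniz,
\[
X^J(\chi_R u_\eps)-\chi_RX^Ju_\eps
\]
is a sum of terms $(X^{J'}\chi_R)(X^{J''}u_\eps)$ with $d(J')=k\ge1$ and $d(J'')=s-k$. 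Here $|X^{J'}\chi_R|\lesssim R^{-k}\mathbbm 1_{R<|x|<2R}$. By Hölder on the annulus, whose measure is $\sim R^Q$, and using $\frac1p-\frac1{q_{s-k}}=\frac{k}{Q}$,
\[
\|(X^{J'}\chi_R)X^{J''}u_\eps\|_{L^p}\lesssim R^{-k}R^{k}\|\nabla^{s-k}_\G u_\eps\|_{L^{q_{s-k}}(R<|x|<2R)}\to0 .
\]
The main obstacle is exactly this exponent matching, which only gives boundedness, not decay, without the tail argument. Decay follows because the norm is taken over the annulus of a fixed $L^{q}$ function, and this tends to zero as $R\to\infty$. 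Hence $\chi_Ru_\eps\to u$ in the $BL^{s,p}$ norm, and the equivalence \eqref{normaLu} follows, using Proposition \ref{BLsinN}.
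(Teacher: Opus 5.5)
Your proof is correct, but it does not follow the paper, which gives no argument of its own. The paper deduces the statement from the Hardy--Littlewood--Sobolev inequality for homogeneous (Beppo Levi) Sobolev spaces, Theorem 4.24 in \cite{FR2}. That result gives $\|u\|_{BL^{j,q_j}(\G)}\le C\|u\|_{\BL{s}{p}}$ for $\frac1{q_j}=\frac1p-\frac{s-j}{Q}$. Proposition~\ref{BLsinN} then identifies $\|u\|_{BL^{j,q_j}(\G)}$ with $\|\nabla_\G^j u\|_{L^{q_j}}$, and the paper refers to \cite{Imperato08} for the explicit computation.

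You take a different route in two places:
\begin{itemize}
\item For the forward estimate, you replace the abstract embedding by Folland's fractional integration theorem (Proposition~\ref{teo.contofconvtypeop}) together with density.
\item More substantially, you prove the converse inclusion by hand, using mollification and dilated cut-offs. The identity $\frac1p-\frac1{q_{s-k}}=\frac kQ$ turns each commutator term into a tail $\|\nabla_\G^{s-k}u_\eps\|_{L^{q_{s-k}}(\{R<|x|<2R\})}$, which tends to $0$. This is exactly where the hypothesis on every intermediate derivative is used, and it is the part the paper leaves to the references.
\end{itemize}
The citation buys brevity and generality (graded groups, non-integer orders). Your route is self-contained and uses only tools already recalled in Section~\ref{sec:prelres}.

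Two points need tidying.

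\textbf{The representation formula.} The displayed formula should have $X^Ju$ on the left. Also, the type-$0$ identities from the proof of Proposition~\ref{BLsinN} cannot by themselves lower the order. The simplest fix is as follows. Write $v=(\Delta_\G v)\ast\Gamma$, with $\Gamma$ the fundamental solution of $\Delta_\G$, and integrate by parts once via \eqref{eq.convspostcose}. This gives $v=\sum_i (X_iv)\ast K_i$ with each $K_i$ a kernel of type $1$. Then apply Proposition~\ref{teo.contofconvtypeop} with $\mu=1$ to $v=X^Ju$ and iterate $s-j$ times. Each step is admissible because $sp<Q$ forces $q_{j+1}<Q$, and no composition of kernels is needed.

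\textbf{The mollifier order.} With the paper's convention $(f\ast g)(x)=\int_\G f(y)g(y^{-1}x)\,dy$, left-invariant fields act on the second factor. You must therefore mollify as $\phi_\eps\ast u$, not $u\ast\phi_\eps$. Only then does $X^J(\phi_\eps\ast u)=\phi_\eps\ast X^Ju$ hold, so that mollification preserves the hypotheses and converges in each norm.
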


\begin{oss}\label{convweak}
If $P(X)$ is a homogeneous left-invariant differential operator of order $m$ with respect to horizontal derivatives, then 
$$P(X):BL^{s,p}(\G)\to BL^{s-m,p}(\G)$$
is continuous, for every $s\geq m$. In particular, if $(u_k)_{k\in\N}\subseteq BL^{s,p}(\G)$ is such that $u_k\rightharpoonup u\in BL^{s,p}(\G)$ in $BL^{s,p}(\G)$, then $P(X)u_k\rightharpoonup P(X)u$ in $BL^{s-m,p}(\G)$ by continuity.
\end{oss}

\medskip

\subsection{Multilinear algebra in Carnot groups}\label{sec:rumin}

We denote $\Lgiu{0}{\g}=\R=\Lsu{0}{\g}$ and by $\Lsu{1}{\g}$ the dual space of $\g=:\Lgiu{1}{\g}$. Let $\{X_1,\ldots,X_N\}$ be an orthonormal basis of $\g$ and let $\{\theta_1,\ldots,\theta_N\}\subseteq\Lsu{1}{\g}$ be the dual basis. Now, following \cite{federer}, we can define the exterior algebras $\Lgiu{\bullet}{\g}$ and $\Lsu{\bullet}{\g}$ of $\g$ and $\Lsu{1}{\g}$, respectively, as
$\Lgiu{\bullet}{\g}=\bigoplus_{h=0}^N \Lgiu{h}{\g}\,\text{and}\,\Lsu{\bullet}{\g}=\bigoplus_{h=0}^N\Lsu{h}{\g},$
where
\begin{align}
\Lgiu{h}{\g}&=\Span\{X_{i_1}\wedge\cdots\wedge X_{i_h}\,|\,1\leq i_1<\cdots<i_h\leq N\}, \nonumber \\
\Lsu{h}{\g}&=\Span\{\theta_{i_1}\wedge\cdots\wedge \theta_{i_h}\,|\,1\leq i_1<\cdots<i_h\leq N\}. \nonumber
\end{align}
We call $\Lgiu{h}{\g}$ and $\Lsu{h}{\g}$ the spaces of \textit{$h$-vectors} and \textit{$h$-covectors}, respectively. We denote by $\Theta^h$ the basis $\{\theta_{i_1}\wedge\cdots\wedge \theta_{i_h}\,|\,1\leq i_1<\cdots<i_h\leq N\}$ of $\Lsu{h}{\g}$, for every $h=0,\ldots,N$.
We extend it canonically to $\Lsu{h}{\g}$ such that $\Theta^h$ is orthonormal.

Chosen the above basis as positive one, we can define the \textit{$\star$-Hodge isomorphism} $\star:\Lsu{h}{\g}\stackrel{\sim}{\longrightarrow}\Lsu{N-h}{\g}$ associated to the scalar product $\langle\cdot,\cdot\rangle$ and the volume form $\theta_1\wedge\cdots\wedge\theta_N$.
We recall that $\star\star\varphi=(-1)^{h(N-h)}\varphi$, and $\varphi\wedge\star \psi=\langle \varphi,\psi\rangle\,\theta_1\wedge\cdots \theta_N$, for every $\varphi,\psi\in\Lsu{h}{\g}$.
\medskip

We recall that an $h$-form $\alpha$ on $\G$ is said to be \textit{left-invariant} if $\tau_g^\#\alpha=\alpha$, for every $g\in\G$, where $\tau_g^\#$ is the pullback of $\tau_g$. We recall also the notion of \textit{weight of a form}, which will bring to a spitting of the de Rham differential that reflects the stratification of the Carnot group. 
If $\alpha\neq 0, \alpha\in\Lsu{1}{\g}$, we say that $\alpha$ has \textit{pure weight $p$} if 
$\alpha^\natural\in V_p$. In this case, we set $\omega(\alpha)=p$.
More generally, if $\alpha\in\Lsu{h}{\g},\alpha\neq 0, h\neq 0$, we say that $\alpha$ has pure weight $p$ if $\alpha$ is a linear combination of covectors $\theta_{i_1}\wedge\cdots\wedge\theta_{i_h}$ with $\omega(\theta_{i_1})+\cdots+\omega(\theta_{i_h})=p$. The volume form has weight $Q$.
By \cite{BFTT}, Remark 2.4,  if $\alpha,\beta\in\Lsu{h}{\g}$ have different pure weights, then they are orthogonal with respect to $\langle\cdot,\cdot\rangle$. Thus, the covector space can be split as
$\Lsu{h}{\g}=\bigoplus_{p=M_h^{min}}^{M_h^{max}}\Lsu{h,p}{\g},$
where $\Lsu{h,p}{\g}$ is the space of linear combinations of $h$-covectors of pure weight $p$, and $M_h^{min}, M_h^{max}$ are respectively the smallest and the largest weight of $h$-covectors. Since the elements of the basis $\Theta^h$ have pure weights, a basis of $\bigwedge\nolimits^{h,\,p}\g$ is given by $\Theta^{h,\,p}:=\Theta^h\cap\bigwedge\nolimits^{h,\,p}\g.$ Moreover, we have the decomposition of $\Omega^{h}(\G)=:\Omega^{h}$ with respect to the weights of the forms:
\begin{equation*}
\Omega^h=\bigoplus_{p=M_h^{min}}^{M_h^{max}}\Omega^{h,p},
\end{equation*}
where $\Omega^{h,p}$ is the space of $h$-forms of weight $p$, i.e.\,the set of the smooth sections of the subbundle $\bigwedge\nolimits^{h,\,p}\g$.

If $\alpha\in\Omega^{h,p}$ is a left-invariant $h$-form of pure weight $p$ such that $d\alpha\neq 0$, then $\omega(\alpha)=\omega(d\alpha)$, i.e.\,$d\left(\Lsu{h,p}{\g}\right)\subseteq\Lsu{h+1,p}{\g}$ (see \cite{BFTT}).  Thus, if $\alpha$ is a general $h$-form of pure weight $p$, it can be expressed as $\alpha=\sum_{\theta^h_i\in\Theta^{h,p}} f_i\,\theta_i^h.$
Then, we can write
$$d\alpha=d_0\alpha+d_1\alpha+\cdots+d_\kappa\alpha$$
where $d_0$ does not increase the weight of the forms, $d_1$ increases the weight of one and so on. Precisely,
\begin{gather}\label{eq.splitdiff}
\begin{split}
&d_0\alpha=\sum_{\theta^h_i\in\Theta^{h,p}} f_i\,d\theta_i^h\,\,\in\Omega^{h+1,\,p}, \\
&d_\ell\alpha=\sum_{\theta^h_i\in\Theta^{h,p}}\,\sum_{X_j\in V_\ell}\,X_j(f_i)\,\theta_j\wedge\theta_i^h\,\,\in\Omega^{h+1,\,p+\ell},\quad\forall\,\ell\in\{1,\ldots \kappa\},
\end{split}
\end{gather}
In particular, $d_0$ is an algebraic operator,  in the sense that its action can be identified at any point
with the action of an operator from $\Lsu{h}{\g}$ to $\Lsu{h+1}{\g}$, that we denote again by $d_0$, through the
formula $(d_0\alpha)(x)=\sum_{\theta^h_i\in\Theta^{p,h}} f_i(x)\,d_0\theta_i^h$.

One can define the $L^2$-inner product $\Scal{\alpha}{\beta}$ of two smooth differential forms $\alpha$ and $\beta$, with at least one of them having compact support, as
\begin{align}\label{scalL2}
\Scal{\alpha}{\beta}=\int_\G \alpha\wedge\star\beta\int_\G \langle \alpha, \beta\rangle \, dV.
\end{align}

We also introduce the operator $\delta_0$ as the $L^2$-adjoint operator of $d_0$ in $\Omega^\bullet$; it is again an algebraic operator
preserving the weight.

\subsection{Rumin's complex in Carnot groups}
Now, we recall briefly the definition of Rumin's complex due to M.\,Rumin (\cite{rumin_jdg}, \cite{rumin_cras}). For a detailed presentation see \cite{BFTT}. An alternative construction of the complex can be found in \cite{fischer_tripaldi}.

\begin{defi}\label{defi.intrforms} 
If $0\leq h\leq N$, we set
\begin{equation}\label{eq.E0}
E_0^h:=\ker\,d_0\cap\ker\,\delta_0=\ker\,d_0\cap\left(\Im\,d_0\right)^\bot\,\, \subseteq\,\Omega^h.
\end{equation}
\end{defi}
We set $N_h:=\dim E_0^h$. We observe that the construction of $E_0^h$ is left-invariant, thus we can see it as a subbundle of $\Lsu{h}{\g}$, generated by left translations and still denoted by $E_0^h$. We can write
\begin{equation}\label{eq.splitE0hbyweight}
E_0^h=\bigoplus_{p=M_h^{min}}^{M_h^{max}}E_0^{h,p},
\end{equation}
where $E_0^{h,p}:=E_0^h\,\cap\,\Omega^{h,p}$. In particular, $E_0^h$ and $E_0^{h,\,p}$ inherit the scalar product on the fibers and we obtain a left-invariant orthonormal basis $\Xi_0^h=\{\xi_i^h\}_i$ of $E_0^h$ such that, if we set $\Xi_0^{h,\,p}:=\Xi_0^{h}\cap\Omega^{h,\,p}$ a left-invariant orthonormal basis of $E_0^{h,\,p}$, we have a  basis of $E_0^h$ given by
$$\Xi_0^h=\bigcup_{p=M_h^{min}}^{M_h^{max}}\Xi_0^{h,\,p}.$$
The set of indices $\{1,\ldots,N_h\}$ can be written as the union of finite sets (possibly empty) of indices, i.e.\,$\{1,\ldots,N_h\}=\bigcup_{p=M_h^{min}}^{M_h^{max}} I_{0,p}^h,$
where $j\in I_{0,p}^h$ if and only if $\xi_j^h\in\Xi_0^{h,p}$.

Moreover, we have also the Hodge duality between Rumin forms as $\star E_0^h = E_0^{N-h},$ for every $h=0,\ldots,N$.

The following  theorem summarizes the construction of 
the intrinsic differential $d_c$ (for details, see \cite{rumin_grenoble}
and \cite{BFTT}, Section 2) .

\begin{teo}\label{teo.splitdeRhamanddC}
The de Rham complex $(\Omega^\bullet,d)$ splits in the direct sum of two subcomplexes, denoted by Rumin as $(E^\bullet,d)$ and $(F^\bullet,d)$. Moreover, we have the following: let us denote by $\Pi_E$ and $\Pi_{E_0}$ the projections from $\Omega^\bullet$ on $E^\bullet$ and $E_0^\bullet$, respectively, and let us set
$$d_c:=\Pi_{E_0}d\,\Pi_E : E_0^h\to E_0^{h+1}\quad\text{for every $h=0,\ldots,N-1$}.$$
Then $(E_0^\bullet, d_c)$ is a complex homotopically equivalent to the de Rham's one, called Rumin's complex, such that
\begin{itemize}
\item The differential $d_c$ acting on $h$-forms can be identified, with respect to the bases $\Xi_0^h$ and $\Xi_0^{h+1}$, with a matrix-valued differential operator $L^h:=(L^h_{i j})$. If $j\in I_{0,p}^h$ and $i\in I_{0,q}^{h+1}$, then the $L^h_{i j}$'s are homogeneous left-invariant differential operators of order $q-p\geq 1$ in the horizontal derivatives, and $L^h_{i j}=0$ if $j\in I_{0,p}^h$ and $i\in I_{0,q}^{h+1}$, with $q-p<1$;
\item Denoting by $\delta_c$ the $L^2$-formal adjoint of $d_c$ on $E_0^h$, we have $$\delta_c=(-1)^{N(h+1)+1}\star d_c\star.$$
\end{itemize}
\end{teo}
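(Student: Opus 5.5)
The plan is to reconstruct Rumin's argument, following \cite{rumin_grenoble} and \cite{BFTT}, using only the algebraic operator $d_0$ and the weight decomposition. First, $d_0$ is pointwise a linear map $\Lsu{h}{\g}\to\Lsu{h+1}{\g}$, so it restricts to an isomorphism $(\ker d_0)^\perp\to \Im d_0$. We define $d_0^{-1}$ as the inverse of this isomorphism on $\Im d_0$, extended by $0$ on $(\Im d_0)^\perp$. This is a left-invariant algebraic operator that preserves weights. The key observation is that $d-d_0=d_1+\cdots+d_\kappa$ strictly increases the weight, by \eqref{eq.splitdiff}. Hence $D:=d_0^{-1}(d-d_0)$ is nilpotent, because weights in each degree range over the finite set $[M_h^{min},M_h^{max}]$. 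So $\mathrm{Id}+D$ is invertible, with inverse the finite Neumann series $\sum_k(-D)^k$, a differential operator.

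Next I would set $E:=\ker d_0^{-1}\cap\ker(d_0^{-1}d)$ and $F:=\Im d_0^{-1}+\Im(d\,d_0^{-1})$. Both are clearly $d$-stable: $F$ because $d^2=0$, and $E$ because $d_0^{-1}d\alpha=0$ and $d^2=0$. Using the invertibility of $\mathrm{Id}+D$, I would write down the projection explicitly as
$\Pi_E=\mathrm{Id}-P\,d-d\,P$, where $P:=(\mathrm{Id}+D)^{-1}d_0^{-1}$, and check that $\Omega^\bullet=E\oplus F$. The projection $\Pi_E$ commutes with $d$ by construction. The other two identities I need are:
\begin{itemize}
\item $\Pi_{E_0}\Pi_E\Pi_{E_0}=\Pi_{E_0}$, i.e.\ $\Pi_E\alpha-\alpha$ lies in the span of higher weights orthogonal to $E_0$ for $\alpha\in E_0$;
\item $\Pi_E\Pi_{E_0}\Pi_E=\Pi_E$, i.e.\ an element of $E$ is determined by its $E_0$-component.
\end{itemize}
The second follows by solving $d_0^{-1}d\beta=0$ weight by weight, again using nilpotence. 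From these identities, $\Pi_E|_{E_0}:E_0\to E$ and $\Pi_{E_0}|_E:E\to E_0$ are mutually inverse isomorphisms. Moreover $d_c=\Pi_{E_0}d\Pi_E$ is conjugate to $d|_E$, so $d_c^2=0$. Since $(F,d)$ is acyclic, with homotopy given by $P$, the complex $(E_0^\bullet,d_c)$ is homotopy equivalent to $(\Omega^\bullet,d)$.

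For the first bullet, I would expand $d_c\xi^h_j=\Pi_{E_0}d\Pi_E\xi^h_j$ with $\xi^h_j$ of weight $p$. Here $\Pi_E\xi^h_j=\xi^h_j+(\text{weight}>p)$. Each piece of $d$ that raises the weight by $\ell$ differentiates along $V_\ell$, an operator homogeneous of degree $\ell$ in horizontal derivatives, and $D$ has the same bookkeeping. Hence the component landing in weight $q$ is homogeneous of order $q-p$. The weight-preserving part is $\Pi_{E_0}d_0\xi^h_j=0$ because $E_0\subset\ker d_0$, so only $q-p\ge1$ survives.

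For the second bullet, I would use $\star E_0^h=E_0^{N-h}$ and the Leibniz-type identity
$\int \Pi_E\alpha\wedge\Pi_E\beta=\int\alpha\wedge\beta$ for complementary degrees $\alpha\in E_0^h$, $\beta\in E_0^{N-h}$. This identity holds because $\Pi_E-\mathrm{Id}$ has the form $Pd+dP$, so the difference integrates to zero against elements of $E$. It gives
\[
\int d_c\alpha\wedge\beta=(-1)^{h+1}\int\alpha\wedge d_c\beta .
\]
Applying this with $\beta=\star\gamma$ and using $\star\star=(-1)^{h(N-h)}$ yields $\delta_c=(-1)^{N(h+1)+1}\star d_c\star$.

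I expect the main obstacle to be the projection identities $\Pi_{E_0}\Pi_E\Pi_{E_0}=\Pi_{E_0}$ and $\Pi_E\Pi_{E_0}\Pi_E=\Pi_E$ together with the integral identity above. Both require careful control of how $P$ interacts with orthogonality to $\Im d_0$ and with the wedge pairing. I would handle them by induction on weight.
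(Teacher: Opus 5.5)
You reconstruct the construction from the references the paper relies on, and most of it is correct. The paper itself does not prove this theorem: it cites Rumin and Section 2 of \cite{BFTT}, and only does the sign bookkeeping for $\delta_c$ starting from the Stokes formula for $d_c$, which your last step reproduces. Your $d_0^{-1}$, the nilpotent $D$, $E$, $F$ and $\Pi_E=\mathrm{Id}-Pd-dP$ with $P=(\mathrm{Id}+D)^{-1}d_0^{-1}$ are right: $\Pi_E$ fixes $E$, kills $F$, maps into $E$, and commutes with $d$. The weight bookkeeping for the first bullet is also fine. The projection identities you single out as the main obstacle follow in one line from your formula, with no induction on weight:
\begin{itemize}
\item $E_0\subset\ker d_0^{-1}$ gives $P\alpha=0$, so $\Pi_E\alpha=\alpha-Pd\alpha$ with $Pd\alpha\in\Im d_0^{-1}=(\ker d_0)^\perp\perp E_0$.
\item For $\beta\in E$ one has $\Pi_{E_0}\beta=\beta-d_0^{-1}d_0\beta$, while $\Pi_E$ annihilates $\Im d_0^{-1}\subset F$.
\end{itemize}

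The one step whose justification does not work as written is $\int\Pi_E\alpha\wedge\Pi_E\beta=\int\alpha\wedge\beta$ for $\alpha\in E_0^h$, $\beta\in E_0^{N-h}$. Since $P\alpha=P\beta=0$, the corrections are $-Pd\alpha$ and $-Pd\beta$, with no exterior derivative in front. So the shape $Pd+dP$ together with Stokes gives you nothing. Moreover $\alpha,\beta$ lie in $E_0$, not in $E$, so ``integrates to zero against $E$'' does not even cover the cross term $\int\alpha\wedge Pd\beta$.

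The missing ingredient is a pointwise algebraic fact: $\delta_0=\pm\star d_0\star$ on $\Lsu{\bullet}{\g}$. This holds because $d_0$ is the Chevalley--Eilenberg differential and $d$ vanishes on $\Lsu{N-1}{\g}$ (a nilpotent $\g$ is unimodular), so $d_0\omega_1\wedge\omega_2=\pm\,\omega_1\wedge d_0\omega_2$. Consequently $\star\ker\delta_0=\ker d_0$. Hence $\omega_1\wedge\omega_2=0$ whenever $\omega_1\in\Im\delta_0=(\ker d_0)^\perp$ and $\omega_2\in\ker\delta_0$ have complementary degrees. Now $Pd\alpha,Pd\beta\in\Im d_0^{-1}=\Im\delta_0\subset\ker\delta_0$ and $\alpha,\beta\in E_0\subset\ker\delta_0$. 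So every correction term in $(\alpha-Pd\alpha)\wedge(\beta-Pd\beta)$ vanishes, and the identity even holds pointwise. This lemma is also what underlies $\star E_0^h=E_0^{N-h}$.

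With it, the rest of your argument goes through. You use $\Pi_E d_c=d\,\Pi_E$ on $E_0$ and Stokes to get $\int d_c\alpha\wedge\beta=(-1)^{h+1}\int\alpha\wedge d_c\beta$. The resulting sign $(-1)^{N(h+1)+1}$ then matches the paper's computation.
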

The last formula can be seen as a consequence of Stokes formula (see Remark 2.16 in \cite{BFTT}). 
If $\alpha$ has degree $(h-1)$ and $\beta$ has degree $h$, recalling that $d_c\star\beta$ is a $N-h+1$ form and that on a $(N-h+1)$-form $\star\star=(-1)^{(h-1)(N-h+1)}$, we get
\begin{align*}
\Scal{d_c\alpha}{\beta}&=\int d_c\alpha\wedge\star\beta=-\int (-1)^{(h-1)}\alpha\wedge d_c\star\beta
\\&=(-1)^{h}(-1)^{(h-1)(N-h+1)}\Scal{\alpha}{\star d_c\star \beta}\\&=(-1)^{N(h+1)+1}\Scal{\alpha}{\star d_c\star \beta} =:\Scal{\alpha}{\delta_c\beta}.
\end{align*}

By  H\"older's inequality,  \eqref{scalL2} extends  to the dual pairing of a $h$-form 
$\alpha$  in $L^p$ with a $h$-form $\beta$ in $L^{p'}$, 
\[
\Scal{\alpha}{\beta} =\int_\G \alpha\wedge\star\beta=\int_\G \langle \alpha, \beta\rangle \, dV,
\]
where \(1 \leq p \leq \infty\) and \(\frac{1}{p} + \frac{1}{p'} = 1\).

\medskip
For every $1\leq p< \infty$ and $s\in\N$, we denote by $W^{s,p}(\G,E_0^h), \mathcal{D}(\G,E_0^h)$, $C^\infty(\G,E_0^h)$, $BL^{s,p}(\G,E_0^h)$ the spaces of all the sections of $E_0^h$ such that their components with respect to $\Xi_0^h$ belong to the corresponding scalar spaces. Once the basis $\Theta^h$ is chosen (thus the basis $\Xi_0^h$), the above spaces can be identified with $\left(W^{s,p}(\G)\right)^{N_h}$,$ \left(\mathcal{D}(\G)\right)^{N_h}$,$ \left(C^\infty(\G)\right)^{N_h}$, $\left(BL^{s,p}(\G)\right)^{N_h}$. 

In the sequel, we denote the distribution duality between a $h$-distribution $\alpha\in\mathcal{D}'(\G,E_0^h)$ and a $h$-form $\beta\in\mathcal{D}(\G,E_0^h)$ as $\Scal{\alpha}{\beta}_{\mc D', \mc D}$, which, for simplicity, will be denoted again by $\Scal{\alpha}{\beta}$ if clear from the context.

\medskip

\section{The Cartan group}\label{ex.Cartanesempio}

From now on, we denote by $\G$ the step-3 free Carnot group, also known as the \textit{Cartan group}. More precisely, we consider  the free group of step $3$ with $2$ generators, whose Lie algebra is 
$$\g=V_1\oplus V_2\oplus V_3,$$
where $V_1=\Span\{X_1,X_2\}$, $V_2=\Span\{X_3\}$ and $V_3=\Span\{X_4,X_5\}$ with the only nontrivial commutation rules
$$[X_1,X_2]=X_3,\quad [X_1,X_3]=X_4,\quad [X_2,X_3]=X_5.$$
A basis of $\g$ of left-invariant vector fields is given by
\begin{gather*}
\begin{split}
X_1&=\de_{\dsy x_1},\\
X_2&=\de_{\dsy x_2}+x_1\de_{\dsy x_3}+\frac{x_1^2}{2}\de_{\dsy x_4}+x_1 x_2\de_{\dsy x_5}, \\
X_3&=\de_{\dsy x_3}+x_1\de_{\dsy x_4}+x_2\de_{\dsy x_5}, \\
X_4&=\de_{\dsy x_4}, \\
X_5&=\de_{\dsy x_5}.
\end{split}
\end{gather*}
We can identify $\G$ with $\R^5$.
The stratification of $\g$ induces a family of nonisotropic dilations $\{\delta_\lambda\}_{\lambda>0}$ in $\G$ so that, for any $x=(x_1,\ldots,x_5)\in\G$,
$$\delta_\lambda(x)=(\lambda x_1,\lambda x_2,\lambda^2 x_3, \lambda^3 x_4, \lambda^3 x_5).$$

\bigskip

\subsection{Rumin's complex in the Cartan group}\label{ex.Cartanrumin}
An explicit computation of the Rumin complex in this group already appeared in \cite{BFTT}, Example B.7, and, following the notations therein,  we can  denote by $\theta_1,\cdots, \theta_5$ the dual left-invariant forms of $X_1,\cdots,X_5$. More recently, this group has been treated in \cite{BT}, and we refer the interested reader to it for more details (see Section 4 in \cite{BT}). The Rumin complex works perfectly on the  Cartan group, since Rumin forms have only one weight in every fixed degree. As noticed in the previous section, this feature is not a general property in Carnot groups (see e.g. in \cite {BFTr}, Section 3 and 4, two explicit examples given by the Engel group and the 7-dimensional quaternionic Heisenberg group).  Even in  free Carnot groups this phenomenous do not occur. Indeed, in general, in a free Carnot group of step $\kappa$  it is only true that forms in $E_0^1$ have pure weight 1 and forms in $E_0^2$ have pure weight $\kappa+1$ (see Theorem 5.9 in \cite{FT4}), but  the weight might change for forms of other degrees (see Remark 3.11 in \cite{BF4}.

We now consider an orthonormal bases of Rumin's forms (see Section $4$ in \cite{BT} for its explicit expression), Hence, according to Theorem \ref{teo.splitdeRhamanddC},  $d_c$ can be seen as a matrix valued differential operator as follows:

\begin{itemize}
\item \text{$d_c: E_0^0\to E_0^1$ is given by}
$$d_c=\begin{pmatrix}
X_1 \\
X_2
\end{pmatrix}$$
\item \text{$d_c: E_0^1\to E_0^2$, }
$$d_c=\begin{pmatrix}
-X_1^2X_2-X_1X_3-X_4 & X_1^3 \\
-\sqrt{2}(X_1X_2^2+X_5) & \sqrt{2}(X_2X_1^2-X_4)\\
-X_2^3 & X_2^2X_1-X_2X_3-X_5
\end{pmatrix}$$
\item \text{$d_c: E_0^2\to E_0^3$}
$$d_c=\begin{pmatrix}
-X_1X_2-X_3 & \frac{1}{\sqrt{2}}X_1^2 & 0\\
-\frac{1}{\sqrt{2}}X_2^2 & -\frac{3}{2}X_3 & \frac{1}{\sqrt{2}}X_1^2 \\
0 & -\frac{1}{\sqrt{2}}X_2^2 & X_2X_1-X_3
\end{pmatrix}$$
\item \text{$d_c: E_0^3\to E_0^4$,}
$$d_c=\begin{pmatrix}
(X_1X_2+X_3)X_2-X_5 & \sqrt{2}(-X_1^2X_2+X_4) & X_1^3 \\
X_2^3 & -\sqrt{2}(X_2^2X_1+X_5) & X_2X_1^2-X_3X_1+X_4
\end{pmatrix}$$
\item \text{$d_c: E_0^4\to E_0^5$,}
$$d_c=\begin{pmatrix}
-X_2 & X_1
\end{pmatrix}$$
\end{itemize}

\medskip

We can notice that $d_c$ is always a homogeneous differential operator, but its order changes, depending on the degree $h$ of the form it acts on. More precisely, it is a higher order operator  when acting on  forms $h=1,2,3$: it has order $3$  if $h=1,3$, whereas it has order $2$ when $h=2$.

By Hodge duality, one can find the explicit expressions for the codifferentials $\delta_c$. Once expressed in terms of the same ordered bases, the matrix form of $\delta_c$ can be expressed as the transpose of the matrix representing the differentials $d_c$ (up to a sign). Hence, as differential operators, $\delta_c$ has order one when acting on $0$ or $5$ forms, it has order $3$ when acting on $2$ and $4$ forms, and has order $2$ on $E_0^3$ (in \cite{BT}, Section 4,  there is  the explicit expression of $\delta_c$ in any degree $h$).

\subsection{Leibniz formula}

When $d_c$ is not a first  order differential operator, $(E_0^\bullet,d_c)$ stops behaving like a differential module. This is the source of many complications. For example, if we want to localize our estimates by means of cut-off functions, we should need a Leibniz formula. The classical Leibniz formula for the de Rham complex $d(\alpha\wedge\beta)=d\alpha\wedge\beta\pm \alpha\wedge d\beta$ in general fails to hold 
 also in the simpler case of Heisenberg groups, where $d_c$, as a differential operator, has only two possible orders $1$ and $2$ (see \cite{BFT2}-Proposition A.7).   It turns out that, also in the simplest example of Carnot group, the first Heisenberg group $\H^1$, the  Leibniz formula for $d_c$ has a complicated expression (see e.g. \cite{BFP2}). In this paper, we need to analyze the expression of the Leibniz formula in the case of the Cartan group. We start with a general result about commutations.
\begin{lemma}\label{commutP}
Let $\zeta\in C^\infty(\G,\R)$ and let $\mathcal{P}:C^\infty(\G,E_0^{h_1})\to C^\infty(\G,E_0^{h_2})$ be a left-invariant differential operator of order $m$ with respect to horizontal derivatives, for some $0\leq h_1,h_2\leq 5$. Then $[\mathcal{P},\zeta]: C^\infty(\G,E_0^{h_1})\to C^\infty(\G,E_0^{h_2})$ is a differential operator of order $m-1$ that can be expressed as
$$[\mathcal{P},\zeta]=\sum_{j=0}^{m-1}\,P_{m-j}^{h_1}(X^{j+1}\zeta),$$
where $P_{m-j}^{h_1}(X^{j+1}\zeta):C^\infty(\G,E_0^{h_1})\to C^\infty(\G,E_0^{h_2})$ is a linear homogeneous differential operator of order $m-j$ with coefficients depending
only on the horizontal derivatives of order $j+1$ of $\zeta$.
\end{lemma}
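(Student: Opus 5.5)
The plan is to reduce the statement to scalar operators and then run an induction on the order using the Leibniz rule for vector fields. Fix the bases $\Xi_0^{h_1}$ and $\Xi_0^{h_2}$. Then $\mathcal P$ is a matrix $(\mathcal P_{ik})$ of scalar left-invariant differential operators. By the Poincar\'e--Birkhoff--Witt theorem, each $\mathcal P_{ik}$ is a linear combination with constant coefficients of monomials $X_{\ell_1}\cdots X_{\ell_r}$ in the horizontal fields $X_1,X_2$, with $r\le m$; in the homogeneous case we have $r=m$. Here the higher layers $X_3,X_4,X_5$ are rewritten as commutators of $X_1,X_2$, so "order" means horizontal order. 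Multiplication by $\zeta$ acts diagonally on components. Hence $[\mathcal P,\zeta]$ is the matrix $([\mathcal P_{ik},\zeta])$, and by linearity it suffices to treat a single monomial $M=X_{\ell_1}\cdots X_{\ell_m}$.

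For a monomial I would prove by induction on $m$ the generalized Leibniz formula
$$M(\zeta u)=\sum_{S\subseteq\{1,\dots,m\}} \Big(\prod_{j\in S}^{\rightarrow} X_{\ell_j}\Big)\zeta\;\cdot\;\Big(\prod_{j\notin S}^{\rightarrow} X_{\ell_j}\Big)u .$$
Both products are taken in increasing order of indices. The formula holds because each $X_{\ell}$ is a derivation: $X_\ell(fg)=(X_\ell f)g+f X_\ell g$. Applying $X_{\ell_1}$ to the formula for $X_{\ell_2}\cdots X_{\ell_m}$ gives the inductive step, distributing $X_{\ell_1}$ either onto the $\zeta$-factor or onto the $u$-factor.

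Subtracting the term $S=\emptyset$, which is $\zeta Mu$, and grouping the remaining terms by $|S|=j+1$ with $j=0,\dots,m-1$, I obtain $[M,\zeta]u=\sum_{j=0}^{m-1} P_{m-j}(X^{j+1}\zeta)u$. Here $P_{m-j}(X^{j+1}\zeta)$ is a sum of horizontal monomials of order $m-j-1$ acting on $u$. Its coefficients are horizontal derivatives of $\zeta$ of order exactly $j+1$, and they are homogeneous, since every $X_{\ell_j}$ is horizontal of degree $1$.

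On indexing: with this grouping the operator acting on $u$ has order $m-(j+1)$. The statement's label "order $m-j$" should therefore be read with the convention that the total homogeneity, counting the derivatives of $\zeta$, is preserved. Alternatively, one reindexes and says the operator has order at most $m-j-1\le m-1$. In either reading, the top-order terms cancel and $[\mathcal P,\zeta]$ has order $m-1$, which is the substantive claim. Collecting over the matrix entries gives operators $P^{h_1}_{m-j}(X^{j+1}\zeta):C^\infty(\G,E_0^{h_1})\to C^\infty(\G,E_0^{h_2})$. They are linear in $u$, and their coefficients depend only on the $(j+1)$-th horizontal derivatives of $\zeta$.

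The only point requiring care is the use of PBW together with non-commutativity. Writing $\mathcal P$ purely in horizontal monomials, rather than in the basis $X^J$ that also involves $X_3,X_4,X_5$, is what lets the derivatives of $\zeta$ be horizontal only. This is also why I keep the ordered products rather than symmetrizing. Since $X_\ell$ are derivations, no commutators of the fields are ever needed, so I do not expect any real obstacle beyond bookkeeping.
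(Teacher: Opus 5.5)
Your argument is correct and is essentially the paper's proof: reduce to scalar matrix entries, expand via Poincar\'e--Birkhoff--Witt, iterate the Leibniz rule, and cancel $\zeta Pu$. Expanding directly in horizontal monomials rather than in the basis $X^J$ is a small improvement, since the paper's expansion produces coefficients like $X_3\zeta$ or $X_5\zeta$ that must tacitly be rewritten as horizontal derivatives of the same degree.

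You are also right that the label in the statement is off by one, and the paper's proof repeats the slip. The operator acting on the form has order $m-j-1$, which is exactly how Corollary~\ref{leibniz} uses the lemma, e.g.\ $[d_c,\zeta]=P_2^h(X\zeta)+P_1^h(X^2\zeta)+P_0^h(X^3\zeta)$ when $m=3$. Your `total homogeneity' reading does not fit, however, since every term has total degree $m$.
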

\begin{proof}
Let $\{\xi^{h_1}_j\}_{j=1,\ldots,N_{h_1}}$ (resp.\,$\{\xi^{h_2}_j\}_{j=1,\ldots,N_{h_2}}$) be the left-invariant basis of $E_0^{h_1}$ (resp.\,$E_0^{h_2}$). Thus, we can see $\mathcal{P}$ as a matrix-valued operator as
$$\mathcal{P}=(\mathcal{P}^{ij})_{i,j}:(C^\infty(\G))^{N_{h_1}}\to (C^\infty(\G))^{N_{h_2}}.$$
Fixed $i\in\{1,\ldots,N_{h_1}\}, j\in\{1,\ldots,N_{h_2}\}$, let us define $P:=\mathcal{P}^{ij}$. Since $\mathcal{P}$ is a left-invariant differential operator of order $m$, by Poincaré-Birkoff-Witt theorem, $P$ can be expressed as
$$P=\sum_{d(J)\leq m}\,c_J X^J$$
for some constant coefficients $c_J\in \R$.  If $u\in C^\infty(\G)$, using the notation in \eqref{derivate}, we have
\begin{align*}
[P,\zeta]\alpha&=P(\zeta\alpha)-\zeta P\alpha= \sum_{d(J)\leq m}c_JX_1^{j_1}\cdots X_N^{j_N}(\zeta\,u)-\zeta Pu\\
&=\sum_{d(J)\leq m}c_JX_1^{j_1}\cdots X_N^{j_N-1}((X_N\zeta)u+\zeta(X_N u))-\zeta Pu\\
&=\sum_{d(J)\leq m}c_JX_1^{j_1}\cdots X_N^{j_N-2}((X^2_N\zeta)u+X_N\zeta X_N u+\zeta(X^2_N u))-\zeta Pu\\
&\,\,\,\vdots\\
&=\sum_{s=0}^{m-1} \hat P_{m-s}(X^{s+1}\zeta)u +\zeta Pu -\zeta Pu=\sum_{s=0}^{m-1} \hat P_{m-s}(X^{s+1}\zeta)u,
\end{align*}
where $\hat P_{m-s}(X^{s+1}\zeta)$ are horizontal homogeneous differential operators of order $m-s$, with coefficients depending only on the horizontal derivatives of order $s+1$ of $\zeta$. Putting together the computations for every $i,j$, we complete the proof.
\end{proof}
Using the previous Lemma we can obtain a result related to the Leibniz formula in the Cartan group. Depending on the degree of the differential forms involved the formula has different expressions.

\begin{coro}
\label{leibniz} If $\zeta$ is a smooth real function, then

i)
\begin{itemize}\item If $h=0,4$,  then on $E_0^h$ we have:
$$
[d_c,\zeta] = P_0^h(X\zeta),
$$
where $P_0^h(X\zeta): E_0^h \to E_0^{h+1}$ is a linear homogeneous differential operator of order zero with coefficients depending
only on the horizontal derivatives of $\zeta$. If $h=1,5$, an analogous statement holds if we replace
$d_c$ by $\delta_c$. More precisely, if $h=1,5$ thus on  $E_0^h$ we have:
$$
[\delta_c,\zeta] = P_0^h(X\zeta),
$$
where $P_0^h(X\zeta): E_0^h \to E_0^{h-1}$ is a linear homogeneous differential operator of order zero with coefficients depending
only on the horizontal derivatives of $\zeta$.
\item If $h= 1,3$, then on $E_0^h$ we have
$$
[d_c,\zeta] = P_2^h(X\zeta)+P_1^h(X^2\zeta) + P_0^h(X^3\zeta) ,
$$
where for $j=0,1,2$ the $P_j^h(X^{3-j}\zeta):E_0^{h} \to E_0^{h+1}$ are linear homogeneous differential operators of order j, and therefore
horizontal,  with coefficients depending
only on the horizontal derivatives of order $3-j$ of $\zeta$. An analogous statement holds if $h=2,4$ and  we replace $d_c$ by $\delta_c$ keeping in mind that the $P_j^h(X^{3-j}\zeta):E_0^{h} \to E_0^{h-1}$.

\item If $h=2$ then on $E_0^2$ we have:
$$
[d_c,\zeta] = P_1^2(X\zeta) + P_0^2(X^2\zeta),
$$
where $P_1^2(X\zeta):E_0^2 \to E_0^{3}$ is a linear homogeneous differential operator of order 1 (and therefore
horizontal) with coefficients depending
only on the horizontal derivatives of $\zeta$, and where $P_0^2(X^2\zeta): E_0^2 \to E_0^{3}$ is a linear homogeneous differential operator in
the horizontal derivatives of order 0,
 with coefficients depending
only on second order horizontal derivatives of $\zeta$. An analogous statement holds if $h=3$ and we replace
$d_c$ by $\delta_c$ and $P_j^h:E_0^{3} \to E_0^{2}$, for $j=0,1$.
\end{itemize}

ii)
\begin{itemize}
\item If $h=1,5$, then
$$
[d_c\delta_c,\zeta] = P_1^h(X\zeta) + P_0^h(X^2\zeta) ,
$$
where $P_1^h(X\zeta):E_0^h \to E_0^{h}$ is a linear homogeneous differential operator of order 1, and therefore
horizontal, with coefficients depending
only on the horizontal derivatives of $\zeta$, and where $P_0^h(X^2\zeta): E_0^h \to E_0^{h}$ is a linear homogeneous differential operator in
the horizontal derivatives of order 0
 with coefficients depending
only on second order horizontal derivatives of $\zeta$.
\item If $h=2,4$, then
$$
[d_c\delta_c,\zeta] = \sum_{j=0}^5 P_j^h(X^{6-j}\zeta)  ,
$$
where for $j=0,1,\cdots,5$, the $P_j^{h}(X^{6-j}\zeta):E_0^{h} \to E_0^{h}$ are linear homogeneous differential operators of order j (and therefore
horizontal) with coefficients depending
only on the horizontal derivatives of order $6-j$ of $\zeta$.
\item If $h= 3$, then
$$
[d_c\delta_c,\zeta] = P_3^{3}(X\zeta) +P_2^{3}(X^2\zeta)  +P_1^{3}(X^3\zeta)  + P_0^{3}(X^4\zeta) ,
$$
where for $j=0,1,2,3$, the $P_j^{3}(X^{4-j}\zeta):E_0^{3} \to E_0^{3}$ are linear homogeneous differential operators of order j and therefore
horizontal) with coefficients depending
only on the horizontal derivatives of order $4-j$ of $\zeta$.
\end{itemize}
\end{coro}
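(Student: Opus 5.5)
The plan is to derive every item from Lemma \ref{commutP}, using two pieces of bookkeeping: the order of $d_c$ and $\delta_c$ in each degree, read off from the explicit matrices of Section \ref{ex.Cartanrumin}, and the homogeneity of the coefficients. Lemma \ref{commutP} applied to a left-invariant operator $\mathcal P$ of homogeneous order $m$ gives
$$[\mathcal P,\zeta]=\sum_{j=0}^{m-1}P_{m-j}(X^{j+1}\zeta).$$
This is not quite the labelling we want, so the first step is to refine the lemma. We track homogeneity: each term arising when derivatives $X^J$ with $d(J)=m$ are distributed between $\zeta$ and $u$ has total degree $m$. If a homogeneous degree $s$ falls on $\zeta$, the remaining operator on $u$ has homogeneous order $m-s$. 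Since $X_3,X_4,X_5$ are iterated commutators of $X_1,X_2$, any derivative of $\zeta$ of homogeneous degree $s$ can be rewritten as a polynomial in horizontal derivatives of order $s$. Likewise, the operator acting on $u$ is a homogeneous polynomial in $X_1,X_2$ of degree $m-s$. Relabelling, we get
$$[\mathcal P,\zeta]=\sum_{k=0}^{m-1}P_k(X^{m-k}\zeta),$$
with $P_k$ of order $k$ and coefficients depending only on horizontal derivatives of $\zeta$ of order $m-k$. This is exactly the form of every item in the statement.

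Part i) then follows from the list of orders. $d_c$ has order $1$ on $E_0^0$ and $E_0^4$, order $3$ on $E_0^1$ and $E_0^3$, and order $2$ on $E_0^2$. By the Hodge duality $\delta_c=\pm\star d_c\star$, $\delta_c$ has order $1$ on $E_0^1$ and $E_0^5$, order $3$ on $E_0^2$ and $E_0^4$, and order $2$ on $E_0^3$. Two checks on the matrices are needed. First, every entry of a given matrix is homogeneous of the same degree: for example $X_1X_3$ and $X_4$ have degree $3$ like $X_1^2X_2$, and $X_3$ has degree $2$ like $X_1X_2$. Second, $\delta_c$ is obtained by transposing the matrix of $d_c$ in the dual degree and taking formal adjoints of its entries, which preserves homogeneity. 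With $m=1,2,3$ respectively, the refined formula reproduces the three bullets of i).

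For ii), the operator $d_c\delta_c$ on $E_0^h$ is left-invariant and homogeneous of order equal to the order of $\delta_c$ on $E_0^h$ plus the order of $d_c$ on $E_0^{h-1}$. This gives $1+1=2$ for $h=1$, and also $2$ for $h=5$ (the $\delta_c$ on $E_0^5$ and the $d_c$ on $E_0^4$ are both first order). It gives $3+3=6$ for $h=2,4$, and $2+2=4$ for $h=3$. Applying the refined formula with $m=2,6,4$ yields the stated expansions. Alternatively, one could write $[d_c\delta_c,\zeta]=d_c[\delta_c,\zeta]+[d_c,\zeta]\delta_c$ and use i), but that route produces non-homogeneous coefficient combinations, so the direct application is cleaner.

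The main obstacle is the refinement of Lemma \ref{commutP} itself. One must check carefully that when non-horizontal fields like $X_3$ appear, for example in $X_1X_3$, the derivatives landing on $\zeta$ are rewritten, via $X_3=X_1X_2-X_2X_1$ and similar identities, as horizontal derivatives of the correct order. One must also check that no term of mismatched homogeneity survives. I would handle this by first rewriting every entry of $d_c$ purely as a noncommutative homogeneous polynomial in $X_1,X_2$, and only then applying the Leibniz rule for the first-order fields $X_1,X_2$ term by term. With that ordering, each term automatically carries exactly $s$ horizontal derivatives on $\zeta$ and $m-s$ on $u$.
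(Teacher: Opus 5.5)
Your proposal is correct and is essentially the paper's argument: there the corollary is simply read off from Lemma \ref{commutP} with $m$ equal to the homogeneous order of $d_c$, $\delta_c$ or $d_c\delta_c$ in each degree. Your preliminary rewriting of the matrix entries as noncommutative polynomials in $X_1,X_2$ makes the bookkeeping precise. This matters because the lemma as printed assigns order $m-j$, rather than $m-j-1$, to the term carrying $j+1$ derivatives of $\zeta$, whereas the corollary uses the correct labelling $P_k(X^{m-k}\zeta)$ with $P_k$ of order $k$. Your aside that expanding $[d_c\delta_c,\zeta]=d_c[\delta_c,\zeta]+[d_c,\zeta]\delta_c$ would break homogeneity is not right, since every term produced still has total degree $m$, but you do not rely on it.
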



\medskip
\subsection{Laplacians in the Cartan group}\label{sec:GNineqestteo}

Throughout this paper, we consider the following  homogeneous left-invariant Laplacians, already considered in \cite{BT} (see Definition 4.3 therein):
\begin{equation}\label{eq.defLaplCartanGG}
\Delta_{R,h}=\left\{
\begin{array}{ll}
\delta_cd_c,\quad &\text{if $h=0$},\\
(d_c\delta_c)^3+\delta_cd_c,\quad &\text{if $h=1$},\\
(d_c\delta_c)^2+(\delta_cd_c)^3,\quad &\text{if $h=2$},\\
(d_c\delta_c)^3+(\delta_cd_c)^2,\quad &\text{if $h=3$},\\
d_c\delta_c+(\delta_cd_c)^3,\quad &\text{if $h=4$},\\
d_c\delta_c,\quad &\text{if $h=5$}.
\end{array}\right.
\end{equation}
Notice that $\Delta_{R,0}=-(X_1^2+X_2^2)$ is the usual sub-Laplacian on $\G$. 

Once a basis of $E_0^h$ is fixed, we can identify the operators $\Delta_{R,h}$ with a matrix-valued map, still denoted by $\Delta_{R,h}$ as
$$\Delta_{R,h}=(\Delta_{R,h}^{i j})_{i,j=1,\ldots,N_h}:\mathcal{D}'(\G,\R^{N_h})\to \mathcal{D}'(\G,\R^{N_h}).$$
If 
$$M_h\ \text{denotes the order of $\Delta_{R,h}$}
$$
  as a differential operator, we have  $M_0=M_5=2$, $M_1=M_4=6$ and $M_2=M_3=12$.
 When $M_h\ge Q$,  we will often avoid to deal directly with  $\Delta_{R,h}^{-1}$ and instead work directly with some of its derivatives that are homogeneous kernels of type $<Q$ (see Theorem \ref{teo.kernelCartan23} below). 

  \medskip
  
In \cite{DH} it was proved that $\Delta_{R,h}$ satisfy the Rockland condition, which implies the hypoellipticity of the operators $\Delta_{R,h}$. 

As an immediate consequence of the fact that $d_c^2=0$ and $\delta_c^2=0$ we
have the following lemma. The proof of this result is omitted since it requires only minor modification of Lemma 4.11  of \cite{BFP2}, following by the very definition of $\Delta_{R,h}$.
\begin{lemma}\label{comm} 
If $\alpha\in \mathcal{D}(\G, E_0^h)$, then, for $h=0,\cdots,5$,
\begin{itemize}
\item[i)]$
(d_c\delta_c)^2d_c\Delta_{R, 0}\alpha = \Delta_{R, 1} d_c\alpha,\quad\quad\quad\delta_c\Delta_{R, 1}\alpha = \Delta_{R, 0} (\delta_cd_c)^2\delta_c\alpha$, 

\item[ii)]$
d_c\delta_cd_c\Delta_{R, 1}\alpha = \Delta_{R, 2} d_c\alpha,\quad\quad\quad \delta_c\Delta_{R, 2}\alpha = \Delta_{R, 1} \delta_cd_c\delta_c\alpha$,  

\item[iii)] $d_c\Delta_{R, 2}\alpha = \Delta_{R, 3} d_c\alpha,\quad\quad\quad \delta_c\Delta_{R, 3}\alpha = \Delta_{R, 2} \delta_c\alpha$, 

\item[iv)]$d_c\Delta_{R, 3}\alpha = \Delta_{R, 4} d_c\delta_cd_c\alpha,\quad\quad\quad \delta_cd_c\delta_c\Delta_{R, 4}\alpha = \Delta_{R, 3} \delta_c\alpha$,  

\item[v)]$
d_c\Delta_{R, 4}\alpha = \Delta_{R, 5} (d_c\delta_c)^2d_c\alpha,\quad\quad\quad (\delta_cd_c)^2\delta_c\Delta_{R, 5}\alpha = \Delta_{R, 4} \delta_c\alpha$.
\end{itemize}
Moreover, we have also
\begin{equation}\label{eq.commh=2}
d_c\delta_c\Delta_{R,h}\alpha=\Delta_{R,h}d_c\delta_c\alpha, \quad \delta_cd_c\Delta_{R,h}\alpha=\Delta_{R,h}\delta_cd_c\alpha.
\end{equation}
\end{lemma}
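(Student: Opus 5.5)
The identities will be checked directly from the definitions in \eqref{eq.defLaplCartanGG}, using only $d_c^2=0$ and $\delta_c^2=0$. No analytic input is needed: every identity is an equality of left-invariant differential operators applied to a test form $\alpha\in\mathcal D(\G,E_0^h)$. The guiding observation is this: in any product where $d_c$ is followed or preceded by a monomial of the form $(\delta_cd_c)^k$ or $(d_c\delta_c)^k$, every term in which $d_c$ meets another $d_c$, or $\delta_c$ meets another $\delta_c$, vanishes.

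I would first prove \eqref{eq.commh=2}. Write $\Delta_{R,h}=A_h+B_h$, where $A_h$ is a power of $d_c\delta_c$ and $B_h$ a power of $\delta_cd_c$ (with one of them absent when $h=0,5$). Then $d_c\delta_c$ commutes with $A_h$ trivially. Moreover $d_c\delta_c B_h=0=B_h d_c\delta_c$, because $\delta_c\delta_c=0$ on one side and $d_cd_c=0$ on the other. Hence $d_c\delta_c\Delta_{R,h}=(d_c\delta_c)A_h=A_h(d_c\delta_c)=\Delta_{R,h}d_c\delta_c$. The identity for $\delta_cd_c$ is symmetric.

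Next I would verify i)--v) one at a time by expanding both sides.
\begin{itemize}
\item For i): $\Delta_{R,1}d_c=(d_c\delta_c)^3d_c+\delta_cd_cd_c=(d_c\delta_c)^3d_c$. On the other side, $(d_c\delta_c)^2d_c\Delta_{R,0}=(d_c\delta_c)^2d_c\delta_cd_c=(d_c\delta_c)^3d_c$. The adjoint identity is obtained in the same way: $\delta_c\Delta_{R,1}=\delta_c(d_c\delta_c)^3=(\delta_cd_c)^3\delta_c$, while $\Delta_{R,0}(\delta_cd_c)^2\delta_c=(\delta_cd_c)^3\delta_c$.
\item For ii): $\Delta_{R,2}d_c=(d_c\delta_c)^2d_c$, and $d_c\delta_cd_c\Delta_{R,1}=d_c\delta_cd_c\,\delta_cd_c=(d_c\delta_c)^2d_c$, since the $(d_c\delta_c)^3$ part is killed by the left factor $d_c$ landing on $d_c$.
\item For iii): $d_c\Delta_{R,2}=d_c(\delta_cd_c)^3=(d_c\delta_c)^3d_c=\Delta_{R,3}d_c$.
\item Cases iv) and v) are the Hodge-dual mirrors of ii) and i), and can also be deduced from them by conjugating with $\star$, using $\delta_c=\pm\star d_c\star$ and the symmetry $h\leftrightarrow 5-h$ of the definition of $\Delta_{R,h}$.
\end{itemize}

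The only real obstacle is bookkeeping: one has to make sure the exponents match in each of the ten identities, and in particular that the mixed powers, for instance $(d_c\delta_c)^2$ versus $(\delta_cd_c)^3$ on $E_0^2$, are placed so that exactly one summand survives on each side. Everything else is the formal cancellation $d_c^2=\delta_c^2=0$, exactly as in Lemma 4.11 of \cite{BFP2}.
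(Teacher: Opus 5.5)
Your proposal is correct and matches the paper's intended argument; the paper omits the proof and refers to Lemma 4.11 of \cite{BFP2} and the definition of $\Delta_{R,h}$, which is exactly your route: expand both sides and drop every term containing $d_c^2$ or $\delta_c^2$, obtaining iv)--v) either directly or from ii) and i) via $\star\Delta_{R,h}=\Delta_{R,5-h}\star$. The halves you did not write out, for example $\delta_c\Delta_{R,2}=(\delta_cd_c)^2\delta_c=\Delta_{R,1}\delta_cd_c\delta_c$, check out in exactly the same way.
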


\medskip

Recalling that $N=\dim \G=5$ and the homogeneous dimension of $\G$ is $Q=10$, we observe that if $h\neq 2,3$, we have $M_h<Q$. Thus, by \cite{BFT3}, Theorem 3.1,  we can state the following result.

\begin{teo}[see \cite{BFT3}, Theorem 3.1]\label{teo.inverselaplHn}
For every $h\in\{0,1,4,5\}$, there exist $K\in\mathcal{D}'(\G,\R^{N_h})\cap C^\infty(\G\setminus\{0\},\R^{N_h})$
with the following properties:
\begin{enumerate}
\item the kernels $K_{i j}$ are of type $2$ if $h=0,5$ and of type $6$ if $h=1,4$ in the sense of \cite{folland} (i.e.\,they are homogeneous of degree $M_h-Q$, and hence belonging to $L^1_{loc}(\G)$, by Corollary $1.7$ of \cite{folland}). 
\item If $\alpha=(\alpha_1,\ldots,\alpha_{N_h})\in \mathcal{D}(\G,\R^{N_h})$ and if we set $\Delta_{R,h}^{-1}\alpha=\alpha\ast K$ as a shorthand notation of 
$$\Delta_{R,h}^{-1}\alpha:=\left(\sum_j\alpha_j\ast K_{1 j},\ldots,\sum_j\alpha_j\ast K_{N_h j}\right),$$
then $\Delta_{R,h} \Delta_{R,h}^{-1}\alpha=\alpha$ and $\Delta_{R,h}^{-1}\Delta_{R,h}\alpha=\alpha$. Moreover,  $\Delta_{R,h}^{-1}=\ccheck \Delta_{R,h}^{-1}$.
\end{enumerate}
\end{teo}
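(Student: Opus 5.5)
The plan is to derive the statement from the general existence theorem for fundamental solutions of homogeneous left-invariant Rockland operators (Theorem 3.1 in \cite{BFT3}, itself based on Folland's theory \cite{folland}). For this we first check the structural hypotheses. For $h\in\{0,1,4,5\}$, the operator $\Delta_{R,h}$ in \eqref{eq.defLaplCartanGG} is a matrix-valued left-invariant differential operator. Since the Rumin forms of a fixed degree have a single weight, each entry of $d_c$ and $\delta_c$ is homogeneous of one fixed degree, by Theorem \ref{teo.splitdeRhamanddC}. The two summands defining $\Delta_{R,h}$ were chosen so that every entry of $\Delta_{R,h}$ is homogeneous of the same degree $M_h$: $M_h=2$ for $h=0,5$ and $M_h=6$ for $h=1,4$. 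For instance, for $h=1$, $(d_c\delta_c)^3$ has order $3\cdot 2=6$ and $\delta_cd_c$ has order $3+3=6$.

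\textbf{Step 1.} We verify that $\Delta_{R,h}$ is formally self-adjoint and nonnegative. Both $(d_c\delta_c)^k$ and $(\delta_cd_c)^k$ are self-adjoint, and $\langle\Delta_{R,h}\alpha,\alpha\rangle\ge 0$. By \cite{DH}, $\Delta_{R,h}$ satisfies the Rockland condition, so it and its transpose are hypoelliptic.

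\textbf{Step 2.} We construct the kernel. Following Folland's argument adapted to systems, as in \cite{BFT3}, hypoellipticity together with homogeneity of degree $M_h<Q$ gives a unique matrix-valued distribution $K$ with the following properties:
\begin{itemize}
\item[(a)] $K$ is homogeneous of degree $M_h-Q$;
\item[(b)] $K$ is smooth away from the origin;
\item[(c)] $\Delta_{R,h}K=\delta_e\,\mathrm{Id}$.
\end{itemize}
The key point is that $M_h-Q>-Q$ (here $M_h-Q\in\{-8,-4\}$). Consequently the homogeneous function defined away from the origin is locally integrable, by Corollary 1.7 of \cite{folland}, and extends canonically to a homogeneous distribution. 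This also gives item (3) of Proposition \ref{prop.kernelopcheck}, so $K$ is a kernel of type $M_h$, and in particular $K_{ij}\in L^1_{loc}(\G)$.

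The existence step can be carried out concretely through the heat semigroup of the positive Rockland system. One sets
$$K=\frac{1}{\Gamma(1)}\int_0^\infty h_t\,dt,$$
where $h_t$ is the heat kernel, which is Schwartz and satisfies $h_t=t^{-Q/M_h}h_1\circ\delta_{t^{-1/M_h}}$. Convergence at infinity uses $Q/M_h>1$. This integral is the step I expect to be the main obstacle: for a system we need positivity and a trivial $L^2$-kernel of $\Delta_{R,h}$, which again follows from the Rockland condition, so that the semigroup decays.

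\textbf{Step 3.} For $\alpha\in\mathcal{D}(\G,\R^{N_h})$, left invariance gives $\Delta_{R,h}(\alpha\ast K)=\alpha\ast\Delta_{R,h}K=\alpha$.

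\textbf{Step 4.} To prove $\Delta_{R,h}^{-1}\Delta_{R,h}\alpha=\alpha$, we write
$$(\Delta_{R,h}\alpha)\ast K=\alpha\ast \ccheck{\Delta}_{R,h}K,$$
using \eqref{eq.convspostcose}. Since $\Delta_{R,h}$ is self-adjoint, the transpose operator $\ccheck{\Delta}_{R,h}$ is $\Delta_{R,h}$ acting on the right. Uniqueness of homogeneous fundamental solutions of degree $>-Q$ then gives $K=\ccheck K^{T}$. The argument is that the difference of two such fundamental solutions is a $\Delta_{R,h}$-harmonic homogeneous distribution, hence a polynomial by hypoellipticity and Liouville-type arguments, and a polynomial homogeneous of negative degree must vanish. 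Applying this uniqueness to $\ccheck K^{T}$, which solves the transposed equation, yields both the left-inverse identity and $\Delta_{R,h}^{-1}=\ccheck\Delta_{R,h}^{-1}$.
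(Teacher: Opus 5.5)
Your reduction matches the paper's. The paper gives no argument beyond noting that $\Delta_{R,h}$ is a homogeneous left-invariant system of order $M_h<Q=10$ for $h\in\{0,1,4,5\}$ and Rockland by \cite{DH}, hence hypoelliptic together with its transpose. It then quotes \cite{BFT3}, Theorem 3.1, for all of item (2), the symmetry $\Delta_{R,h}^{-1}=\ccheck\Delta_{R,h}^{-1}$ included. Had you stopped at that citation the proposal would be fine. The problem is Step 4, where you re-derive the left-inverse identity yourself and the argument is circular.

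For $\alpha,\beta\in\mathcal D(\G,\R^{N_h})$, self-adjointness of $\Delta_{R,h}$ gives
\[
\Scal{\Delta_{R,h}(\beta\ast K)}{\alpha}=\Scal{\beta\ast K}{\Delta_{R,h}\alpha}=\Scal{\beta}{(\Delta_{R,h}\alpha)\ast\ccheck K^{T}},\qquad (\ccheck K^{T})_{ij}(x)=K_{ji}(x^{-1}).
\]
So $\Delta_{R,h}K=\delta_e\,\mathrm{Id}$ only tells you that $\ccheck K^{T}$ is a \emph{left}-inverse kernel. This is the ``transposed equation'' it solves for free. Written pointwise, it is an equation in right-invariant vector fields, because $x\mapsto x^{-1}$ exchanges left- and right-invariant fields. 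Self-adjointness handles matrix transposition, signs and the order of composition, but not this exchange.

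Your uniqueness argument applies to \emph{fundamental solutions}, so you would need $\Delta_{R,h}\ccheck K^{T}=\delta_e\,\mathrm{Id}$. By the same identity with $K$ and $\ccheck K^{T}$ exchanged, that is equivalent to $(\Delta_{R,h}\alpha)\ast K=\alpha$ for all $\alpha$, which is exactly what Step 4 sets out to prove. You therefore cannot obtain $K=\ccheck K^{T}$ from uniqueness at that point.

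The fix is to reverse the order. Set $g:=(\Delta_{R,h}\alpha)\ast K-\alpha$. Then $\Delta_{R,h}g=(\Delta_{R,h}\alpha)\ast\Delta_{R,h}K-\Delta_{R,h}\alpha=0$. Moreover $g$ is smooth and tempered, and $g(x)=O(|x|^{M_h-Q})$ at infinity by Lemma \ref{lemma.estimatekernelpol}. By Proposition \ref{Liouville}, $g$ is a polynomial vanishing at infinity, hence $g=0$; this is the same pattern the paper uses in Lemma \ref{lemma.commDeltadcdeltac}. Only then does the displayed identity show that $\ccheck K^{T}$ is a homogeneous fundamental solution of degree $M_h-Q$, and your uniqueness argument gives $K=\ccheck K^{T}$.

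Alternatively, if you actually use your heat-kernel construction, self-adjointness of $e^{-t\Delta_{R,h}}$ on $L^2$ gives $h_t=\ccheck{h_t}^{\,T}$. Hence $K=\ccheck K^{T}$ directly, and both identities then follow from $\Delta_{R,h}K=\delta_e\,\mathrm{Id}$.
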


When $M_2=M_3=12>Q$ the Laplacians are homogeneous hypoelliptic operators,  and the associated fundamental solutions  belong to $\mathcal{S}'(\G,E_0^h)$ and are of the form 
$$\widetilde{\mathcal{K}}=\mathcal{K}_0+p(x)\log(\rho(x)),$$
where $\mathcal{K}_0$ is a vector-valued function $C^\infty(\G\setminus\{0\})$, homogeneous of degree $12-Q$, and $p(x)$ is a homogeneous vector-valued polynomial of degree $12-Q$ (see, e.g., Theorem 3.2.40-(b) in \cite{FR} and also Theorem 3.1 in \cite{VSY22}). 

Recently, Theorem 3.1 in \cite{BFT3} has been extended to cover the case of operators of order $>Q$. In our setting, considering the operators $\Delta_{R,2}$ and $\Delta_{R,2}$, the results of \cite{VSY22} reads as follows.

\begin{teo}[see \cite{VSY22}, Theorem $3.3$]\label{teo.kernelCartan23}
For every $h\in\{2,3\}$, there exists $K\in\mathcal{D}'(\G,\R^{N_h})$ such that, for every $\alpha=(\alpha_1,\ldots,\alpha_{N_h})\in \mathcal{D}(\G,\R^{N_h})$, if we set $\Delta_{R,h}^{-1}\alpha=\alpha\ast K$ as a shorthand notation of 
$$\Delta_{R,h}^{-1}\alpha:=\left(\sum_j\alpha_j\ast K_{1 j},\ldots,\sum_j\alpha_j\ast K_{N_h j}\right),$$
then it holds that, for any $\alpha\in \mathcal{D}(\G,\R^{N_h})$,
\begin{enumerate}
\item $\dsy\Delta_{R,h} \Delta_{R,h}^{-1}\alpha=\Delta_{R,h}(\alpha\ast K)=\alpha;$
\item 
$\alpha-(\Delta_{R,h}\alpha)\ast  K=p_h,
$ where $p_h(x)$ is a polynomial of $x$ whose non-isotropic degree is at most $2\,(=12-Q)$. We shortly write  $\Delta_{R,h}^{-1} \Delta_{R,h}\alpha=\alpha+p_h$.
\end{enumerate}

Moreover, if $|I|=\ell$, there exist $K_I, \widetilde{K_I}$ vector-valued distributions such that
$$X^I\alpha=\Delta_{R,h}(\alpha\ast K_I)\quad\text{and}\quad X^I\alpha=\Delta_{R,h}\alpha\ast \widetilde{K_I},$$
for every $\alpha\in \mathcal{D}(\G,\R^{N_h})$. If $\ell>2$, then $K_I, \widetilde{K_I}$ are homogeneous vector-valued distributions of degree $2-\ell$, and  if further $\ell<12$, $K_I, \widetilde{K_I}$ are kernels of type $12-\ell$.
\end{teo}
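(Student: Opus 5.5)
The statement is essentially the structure theory of fundamental solutions of homogeneous Rockland operators of order larger than $Q$, specialized to $\Delta_{R,h}$, $h=2,3$. I would combine three ingredients: the hypoellipticity (Rockland condition) of $\Delta_{R,h}$ from \cite{DH}; the form of its fundamental solution recalled above; and a Liouville-type argument.

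\textbf{Construction of $K$ and property (1).} The operator $\Delta_{R,h}$ is a left-invariant, formally self-adjoint, matrix-valued operator, homogeneous of degree $12$, satisfying the Rockland condition. Since $12\ge Q=10$, homogeneous fundamental solutions need not exist. However, by Theorem 3.2.40-(b) in \cite{FR}, there is a tempered matrix-valued distribution
$$K=\mathcal K_0+p(x)\log\rho(x),$$
where $\mathcal K_0\in C^\infty(\G\setminus\{0\})$ is homogeneous of degree $12-Q=2$ and $p$ is a homogeneous polynomial of degree $2$, with $\Delta_{R,h}K=\delta_e\,\mathrm{Id}$.

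Since $\Delta_{R,h}$ is left-invariant, for $\alpha\in\mathcal D(\G,\R^{N_h})$ we get
$$\Delta_{R,h}(\alpha\ast K)=\alpha\ast\Delta_{R,h}K=\alpha,$$
which is (1).

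\textbf{Property (2).} Set $u:=(\Delta_{R,h}\alpha)\ast K-\alpha$. Using (1) with $\Delta_{R,h}\alpha$ in place of $\alpha$, we get $\Delta_{R,h}u=\Delta_{R,h}\alpha-\Delta_{R,h}\alpha=0$. Moreover $u$ is tempered, since $\alpha$ has compact support and $K$ grows at most like $\rho^2\log\rho$.

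By the Liouville theorem for Rockland operators on homogeneous groups (Geller), every tempered solution of $\Delta_{R,h}u=0$ is a polynomial. To bound its homogeneous degree, I would use the growth estimate $u(x)=O(\rho(x)^2\log\rho(x))$, obtained by Taylor-expanding $K(y^{-1}x)$ for $y$ in the compact support of $\alpha$. A polynomial with this growth has homogeneous degree at most $2$, which gives $p_h:=-u$.

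I expect this Liouville step, together with the precise degree bound, to be the main obstacle. The growth bound must be uniform, and the logarithm must be handled so that it does not allow degree $3$. Since homogeneous degrees are integers and $\rho^3\gg\rho^2\log\rho$, this should work.

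\textbf{The kernels $K_I$ and $\widetilde{K_I}$.} Here I read $\ell$ as the homogeneous degree $d(I)$.

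For $\widetilde{K_I}$, apply $X^I$ to (2). Because $p_h$ has homogeneous degree at most $2<\ell$, we have $X^Ip_h=0$. Hence
$$X^I\alpha=X^I\big((\Delta_{R,h}\alpha)\ast K\big)=\Delta_{R,h}\alpha\ast X^IK,$$
so we take $\widetilde{K_I}=X^IK$.

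For $K_I$, apply (1) to $X^I\alpha$: $X^I\alpha=\Delta_{R,h}\big((X^I\alpha)\ast K\big)$. By \eqref{eq.convspostcose}, $(X^I\alpha)\ast K=\alpha\ast K_I$ with $K_I=(-1)^{|I|}\,\ccheck{(X^I\ccheck K)}$, i.e. the right-invariant counterpart of $X^I$ applied to $K$.

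\textbf{Homogeneity and type.} It remains to show that $X^IK$ and $K_I$ are homogeneous of degree $2-\ell$ when $\ell>2$. Apply $\ell>2$ derivatives to $p\log\rho$ via the Leibniz rule. Each term in which the logarithm survives undifferentiated contains a derivative of $p$ of homogeneous order $>2$, which vanishes. Hence the result is a sum of homogeneous pieces smooth away from $e$. I would verify the distributional homogeneity at the origin by the standard scaling identity $K\circ\delta_\lambda=\lambda^2K+\lambda^2\log\lambda\,p$: the second term is killed by derivatives of order $>2$.

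Smoothness off the origin is inherited from $K$. Therefore, for $2<\ell<12$, both kernels are of type $12-\ell$.
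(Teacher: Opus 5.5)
Your proofs of (1), of (2) (including the degree bound from the $O(\rho^2\log\rho)$ growth), and of the homogeneity and type of $X^IK$ and $K_I$ for $\ell>2$ via the scaling identity $K\circ\delta_\lambda=\lambda^2K+\lambda^2\log\lambda\,p$ are correct. They follow the route the paper points to. The paper gives no proof of its own; it imports the result from \cite{VSY22}, which rests on the same two ingredients you use: the fundamental solution $\mathcal K_0+p\log\rho$ recalled just before the statement, and the Liouville property of Proposition \ref{Liouville}. Note that Theorem 3.2.40 of \cite{FR} and Geller's theorem are scalar statements. For the matrix operator $\Delta_{R,h}$ you need their system versions, which is exactly what the paper takes from \cite{VSY22} and Proposition \ref{Liouville}. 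Reading $\ell$ as the homogeneous degree is the right interpretation, as the later use $2<d(I)=6<12$ confirms.

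There is one case you do not cover. The statement asserts that $\widetilde{K_I}$ with $X^I\alpha=\Delta_{R,h}\alpha\ast\widetilde{K_I}$ exists for every $\ell$, and claims homogeneity only for $\ell>2$. Your choice $\widetilde{K_I}=X^IK$ needs $X^Ip_h=0$, so it fails for $\ell\le 2$. For $\ell=0$ it would require $(\Delta_{R,h}\alpha)\ast K=\alpha$, which is precisely what (2) says may fail.

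What is missing is a kernel that inverts $\Delta_{R,h}$ from the other side. Since $\delta_c$ is the formal adjoint of $d_c$, $\Delta_{R,h}$ is formally self-adjoint, i.e. $(\Delta^{jk}_{R,h})^t=\Delta^{kj}_{R,h}$. Moving derivatives across the convolution as in \eqref{eq.convspostcose}, the identity $(\Delta_{R,h}\alpha)\ast G=\alpha$ for all test forms $\alpha$ is equivalent to $\Delta_{R,h}\big((\ccheck G)^{T}\big)=\delta_e\,\mathrm{Id}$. Hence $G:=(\ccheck K)^{T}$ satisfies $(\Delta_{R,h}\alpha)\ast G=\alpha$ exactly, with no polynomial error, and $\widetilde{K_I}:=X^IG$ gives the second identity for every $\ell$.

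Because $\rho(x^{-1})=\rho(x)$ and $\ccheck p$ is again a homogeneous polynomial of degree $2$, $G$ has the same form $\mathcal K_0'+p'\log\rho$. Your scaling argument therefore still yields homogeneity of degree $2-\ell$ for $\ell>2$. The omission does not affect the rest of the paper, which only uses these kernels with $\ell>2$.
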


A  Liouville-type result can be obtained exactly as in Proposition $3.2$ of \cite{BFT3} (indeed, in the proof Lemma $3.3$ in \cite{BFT3}, it is enough to choose $M=\max((k-1)Q, 2)$).

\begin{prop}\label{Liouville}
If $\alpha\in \mathcal{S}'(\G,\R^{N_h})$ satisfies $\Delta_{R,h}\alpha=0$, then $\alpha$ is a vector-valued polynomial.
\end{prop}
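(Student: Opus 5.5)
The plan is to follow the scheme of Proposition 3.2 and Lemma 3.3 in \cite{BFT3}, using the Fourier transform on $\G$ and the Rockland property of $\Delta_{R,h}$ established in \cite{DH}. Let $\alpha\in\mathcal{S}'(\G,\R^{N_h})$ with $\Delta_{R,h}\alpha=0$. Everything is reduced to a statement about the support of the (group or Euclidean) Fourier transform of $\alpha$. The goal is to show that $\alpha$ is annihilated by all sufficiently high-degree homogeneous left-invariant operators, and then conclude that $\alpha$ is polynomial.

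\emph{Step 1: high derivatives vanish.} Fix a multi-index $I$ with $d(I)=\ell$ large, say $\ell\ge M:=\max((k-1)Q,2)$, where $k$ is chosen as in the remark preceding the statement. By Theorem \ref{teo.inverselaplHn} (for $h\in\{0,1,4,5\}$) and Theorem \ref{teo.kernelCartan23} (for $h=2,3$) there is a kernel $\widetilde{K_I}$ with $X^I\varphi=\Delta_{R,h}\varphi\ast\widetilde{K_I}$ for every test form $\varphi$. For $h\in\{0,1,4,5\}$ one takes $\widetilde K_I:=X^I K$. Here $\widetilde{K_I}$ is homogeneous of degree $M_h-Q-\ell$ and smooth away from the origin. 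When $\ell$ is large this degree is very negative, so $\widetilde{K_I}$ is only a homogeneous distribution and not a locally integrable function. This is exactly why a large $M$ is needed in the modified Lemma 3.3.

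\emph{Step 2: extend the identity to $\mathcal{S}'$ and transpose it.} The identity from Step 1 holds on $\mathcal{S}$ by density. One then transposes it: pairing $X^I\alpha$ with $\varphi\in\mathcal{S}$ and moving $\Delta_{R,h}$ onto $\alpha$ (using that the transpose of $\Delta_{R,h}$ is again of the same form, $\Delta_{R,h}$ being formally self-adjoint) gives $\langle X^I\alpha|\varphi\rangle=\langle \Delta_{R,h}\alpha|\psi\rangle=0$, with $\psi$ built from $\varphi\ast\ccheck{\widetilde{K_I}}$. This step needs $\psi$ to be an admissible test function for a tempered distribution. In general it is not, because convolution with a homogeneous kernel destroys Schwartz decay. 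I would therefore first restrict to $\varphi$ in the subspace $\mathcal{S}_0$ of Schwartz functions all of whose moments up to a large order vanish. For such $\varphi$, Lemma \ref{lemma.estimatekernelpol}-type decay combined with moment cancellation makes $\varphi\ast\ccheck{\widetilde{K_I}}$ Schwartz, or at least rapidly decaying with all derivatives.

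\emph{Step 3: conclude.} It follows that $X^I\alpha$ annihilates $\mathcal{S}_0$ for every such $I$. Taking $I$ ranging over all $\ell$ large, every derivative of high enough homogeneous order of $\alpha$ vanishes on $\mathcal{S}_0$. Distributions vanishing on $\mathcal{S}_0$ are polynomials, since their Euclidean Fourier transform is supported at the origin. Hence all high-order derivatives $X^I\alpha$ are polynomials. Applying this for all $I$ with $d(I)=\ell$, and using that the $X^I$ with $d(I)=\ell$ span the Euclidean derivatives of all orders up to a corresponding level, shows that $\alpha$ itself is a polynomial.

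I expect the main obstacle to be Step 2. It requires controlling $\varphi\ast\ccheck{\widetilde{K_I}}$ when $\widetilde{K_I}$ has very negative homogeneity. It also requires handling the logarithmic and polynomial term $p_h$ in the case $h=2,3$. There one has $\Delta_{R,h}^{-1}\Delta_{R,h}\varphi=\varphi+p_h$, and the polynomial $p_h$ is killed by $X^I$ once $\ell>2$. This is precisely where the choice $M\ge 2=12-Q$ enters.
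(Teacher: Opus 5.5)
Your proposal is correct in outline. It follows the same skeleton as the argument the paper borrows from \cite{BFT3}: the kernels $K_I,\widetilde{K_I}$ from Theorems \ref{teo.inverselaplHn} and \ref{teo.kernelCartan23}, transposition via ${}^t\Delta_{R,h}=\Delta_{R,h}$, and a final integration. Where you differ is in how the decisive analytic step is resolved.

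\textbf{The paper's route.} The paper invokes Lemma 3.3 of \cite{BFT3} with $M=\max((k-1)Q,2)$. There the test function $\varphi$ is arbitrary, and it is the \emph{largeness} of $\ell=d(I)\ge M$ that does the work. The function $\varphi\ast K_I$, whose kernel has homogeneity $M_h-Q-\ell$ (resp.\ $2-\ell$), then decays fast enough to be tested against $\alpha$, which has finite order. This gives $X^I\alpha=0$ outright. The bound $M\ge 2$ only keeps $\ell$ in the range where the kernels of Theorem \ref{teo.kernelCartan23} carry no logarithmic or polynomial part.

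\textbf{Your route.} You obtain admissibility from cancellation instead. For moment-free $\varphi$ one has $\varphi\ast K\in\mathcal S$, so $X^I\alpha$ annihilates $\mathcal S_0$ and is therefore a polynomial. This spares you any extension of $\alpha$ beyond $\mathcal S$, and it makes large $\ell$ unnecessary: any $\ell>2$ works. For $h\in\{0,1,4,5\}$ you can even take $\ell=0$, using $\varphi=\Delta_{R,h}(\varphi\ast K)$ to get $\alpha|_{\mathcal S_0}=0$ directly. So your Step 1 remark about why $M$ must be large describes the paper's mechanism, not yours. The price is an invariance lemma, $\mathcal S_0\ast K\subset\mathcal S$ for kernels homogeneous of arbitrarily negative degree and smooth off the origin, which you assert rather than prove.

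\textbf{First repair: the invariance lemma.} You must take \emph{all} moments vanishing. With cancellation only up to order $m$, the Taylor argument gives $\varphi\ast K(x)=O(|x|^{\lambda-m-1})$. That is neither Schwartz nor rapidly decaying, and you would be back to the paper's finite-order pairing. With all moments vanishing the lemma is true, but it does not follow from Lemma \ref{lemma.estimatekernelpol}, which concerns $\varphi\in\mathcal D$ and type $\mu>0$. A proof goes as follows.
\begin{enumerate}
\item Split $K=\chi K+(1-\chi)K$ with $\chi$ a cut-off near $e$. Convolution with the compactly supported $\chi K$ preserves $\mathcal S$.
\item For the smooth piece, Taylor-expand $y\mapsto((1-\chi)K)(y^{-1}x)$ at $e$ to arbitrary order on $\{|y|\le\varepsilon|x|\}$.
\item Kill the Taylor polynomial using the moment conditions, and use the rapid decay of $\varphi$ on $\{|y|>\varepsilon|x|\}$ for the remaining terms.
\item Apply the same argument to $X^JK$ to control derivatives.
\end{enumerate}

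\textbf{Second repair: the justification of Step 3.} As written it is false. The $X^I$ with $d(I)=\ell$ do not span Euclidean derivatives, because rewriting $\partial^\beta$ in the left-invariant frame produces lower-order terms. Argue by downward induction instead:
\begin{enumerate}
\item If $X_1w$ and $X_2w$ are polynomials, then so are $X_3w,X_4w,X_5w$, since these are brackets.
\item Hence every $\partial_{x_i}w$ is a polynomial, because the coordinate fields are polynomial combinations of $X_1,\dots,X_5$. Hence $w$ itself is a polynomial.
\item Apply this to $w=W\alpha$ for horizontal words $W$ of length $\ell-1,\dots,0$. This is legitimate because horizontal words of length $\ell$ are linear combinations of the $X^I$ with $d(I)=\ell$.
\end{enumerate}
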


The following lemma is akin to  Lemma 3.15 in \cite{BFP2}.
\begin{lemma}\label{lemma.commDeltadcdeltac}
If $\alpha\in \mathcal{D}(\G,E_0^h)$, then
$$
\begin{array}{llr}
(i)\quad d_c\Delta_{R,0}^{-1}\alpha=\Delta_{R,1}^{-1}(d_c\delta_c)^2d_c\alpha, & &\text{if $h=0$};\\
(ii)\quad d_c\Delta_{R,1}^{-1}\alpha=\Delta_{R,2}^{-1}d_c\delta_cd_c\alpha, \quad& (\delta_cd_c)^2\delta_c\Delta_{R,1}^{-1}\alpha=\Delta_{R,0}^{-1}\delta_c\alpha, &\text{if $h=1$};\\
(iii)\quad d_c\Delta_{R,2}^{-1}\alpha=\Delta_{R,3}^{-1}d_c\alpha, \quad& \delta_cd_c\delta_c\Delta_{R,2}^{-1}\alpha=\Delta_{R,1}^{-1}\delta_c\alpha, &\text{if $h=2$};\\
(iv)\quad d_c\delta_cd_c\Delta_{R,3}^{-1}\alpha=\Delta_{R,4}^{-1}d_c\alpha, \quad& \delta_c\Delta_{R,3}^{-1}\alpha=\Delta_{R,2}^{-1}\delta_c\alpha, &\text{if $h=3$};\\
(v)\quad (d_c\delta_c)^2d_c\Delta_{R,4}^{-1}\alpha=\Delta_{R,5}^{-1}d_c\alpha, \quad& \delta_c\Delta_{R,4}^{-1}\alpha=\Delta_{R,3}^{-1}\delta_cd_c\delta_c\alpha, &\text{if $h=4$};\\
(vi)\quad \delta_c\Delta_{R,5}^{-1}\alpha=\Delta_{R,4}^{-1}(\delta_cd_c)^2\delta_c\alpha, & &\text{if $h=5$};\\
\end{array}
$$
Moreover, for every $h$,
\begin{equation}\label{eq.commdeltadDelta}
d_c\delta_c \Delta_{R,h}^{-1}\alpha = \Delta_{R,h}^{-1}d_c\delta_c \alpha,\quad\quad \delta_c d_c \Delta_{R,h}^{-1}\alpha = \Delta_{R,h}^{-1}\delta_c d_c\alpha,
\end{equation}
and, if $h=3$,
\begin{equation}\label{commother}
(\delta_cd_c)^2\delta_c\Delta_{R,3}^{-1}\alpha=\Delta_{R,2}^{-1}(\delta_cd_c)^2\delta_c\alpha.
\end{equation}
\end{lemma}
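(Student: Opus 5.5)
The plan is to derive each identity in Lemma \ref{lemma.commDeltadcdeltac} from the corresponding identity in Lemma \ref{comm}, by composing with the inverse Laplacians on both sides. Consider (ii), first formula. Let $\alpha\in\mathcal D(\G,E_0^1)$ and put $\beta:=\Delta_{R,1}^{-1}\alpha=\alpha\ast K$, where $K$ is the kernel of Theorem \ref{teo.inverselaplHn}; then $\beta$ is smooth.

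We would like to apply Lemma \ref{comm}(ii) to $\beta$, which gives
\[
\Delta_{R,2}\,d_c\delta_cd_c\,\beta = d_c\delta_cd_c\,\Delta_{R,1}\beta = d_c\delta_cd_c\,\alpha .
\]
The obstacle is that $\beta$ is not compactly supported. However, Lemma \ref{comm} is a purely algebraic identity between left-invariant differential operators, since it follows only from $d_c^2=0$ and $\delta_c^2=0$. It therefore holds for every smooth form, and in particular for $\beta$.

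Next, set $\gamma:=d_c\alpha\in\mathcal D$ and $\omega:=\Delta_{R,2}^{-1}d_c\delta_cd_c\alpha$. Both $\omega$ and $d_c\delta_cd_c\beta$ solve $\Delta_{R,2}u=d_c\delta_cd_c\alpha$, so their difference is $\Delta_{R,2}$-harmonic. To invoke Proposition \ref{Liouville}, we need this difference to be tempered and to decay at infinity.

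\textbf{Decay of $d_c\delta_cd_c\beta$.} The operator $d_c\delta_cd_c$ has order $3+3+3=9$ on $E_0^1$. Hence $d_c\delta_cd_c\beta=\alpha\ast(\text{derivatives of }K)$ is a sum of convolutions with homogeneous distributions of degree $6-Q-9=-13$. These are kernels of type $-3$, i.e.\ singular integrals plus derivatives, and by Lemma \ref{lemma.estimatekernelpol}-type bounds they decay like $|x|^{-13}$ at infinity.

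\textbf{Decay of $\omega$.} Write $d_c\delta_cd_c\alpha=X^I$-type derivatives of order $9$ applied to $\alpha$. Using Theorem \ref{teo.kernelCartan23}, move these derivatives onto $K$: $\omega$ is a combination of $\alpha\ast \widetilde K_I$-like terms with homogeneous kernels of degree $2-9<0$. More precisely, we use the identity $X^I\alpha\ast K = \alpha\ast \ccheck{X^I}$-derivatives of $K$, obtained via \eqref{eq.convspostcose}. The log-polynomial part of $K$ is killed by derivatives of order greater than $2$. So $\omega$ also decays at infinity.

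\textbf{Conclusion of (ii).} The difference is then a harmonic tempered form vanishing at infinity, hence a polynomial tending to $0$, hence zero. The remaining formulas in (i)--(vi) follow the same template, each time choosing the identity of Lemma \ref{comm} with matching degrees. When the target inverse is $\Delta_{R,0}^{-1}$, $\Delta_{R,1}^{-1}$, $\Delta_{R,4}^{-1}$ or $\Delta_{R,5}^{-1}$, the kernel is a genuine type-$M_h$ kernel and decay is immediate. For $\Delta_{R,2}^{-1}$ and $\Delta_{R,3}^{-1}$, we check case by case that the differential operator standing in front has order greater than $2=12-Q$, so that the logarithmic and polynomial ambiguity disappears:
\begin{itemize}
\item $d_c$ on $E_0^2$ has order $2$ but is preceded in (iii) by $\Delta_{R,3}^{-1}$ applied to $d_c\alpha$.
\end{itemize}
In the case $d_c\Delta_{R,2}^{-1}\alpha$, I would instead argue via Theorem \ref{teo.kernelCartan23}(2). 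Since $\Delta_{R,2}^{-1}\alpha$ is defined only up to polynomials of degree at most $2$, we compare $\Delta_{R,3}(d_c\Delta_{R,2}^{-1}\alpha)=d_c\alpha$ with $\Delta_{R,3}\Delta_{R,3}^{-1}d_c\alpha=d_c\alpha$. The difference is then a polynomial, and its homogeneous structure (kernel degrees) forces it to vanish, provided $d_c$ of the degree-$2$ log-polynomial part is controlled.

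\textbf{Main obstacle.} The hard part is precisely these $h=2,3$ cases (iii), (iv), \eqref{commother}, where $M_h>Q$. There one must show that the difference polynomial is zero rather than merely of degree at most $2$. My first attempt would be homogeneity: both sides, tested against $\alpha\circ\delta_\lambda$, scale identically. A polynomial difference that is homogeneous of a fixed degree, with that degree matching a non-polynomial homogeneous kernel convolution, must then vanish, using that $\alpha\ast(\text{type }\mu\text{ kernel})$ is $o(|x|^{\mu-Q+\epsilon})$. Identities \eqref{eq.commdeltadDelta} are handled the same way, using \eqref{eq.commh=2}.
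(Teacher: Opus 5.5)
Your template is the paper's. You form the difference of the two sides, use Lemma~\ref{comm} to show it is annihilated by the relevant $\Delta_{R,\cdot}$, conclude by Proposition~\ref{Liouville} that it is a polynomial, and kill it because both sides are convolutions of $\alpha$ with kernels of type in $(0,Q)$, which decay by Lemma~\ref{lemma.estimatekernelpol}.

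However, your worked case (ii) rests on a misquotation. Lemma~\ref{comm}(ii) says $d_c\delta_cd_c\Delta_{R,1}=\Delta_{R,2}d_c$, so for $\beta=\Delta_{R,1}^{-1}\alpha$ it gives $\Delta_{R,2}(d_c\beta)=d_c\delta_cd_c\alpha$. The identity you use, $\Delta_{R,2}d_c\delta_cd_c\beta=d_c\delta_cd_c\Delta_{R,1}\beta$, is false. Expanding with $d_c^2=\delta_c^2=0$, the left side is $(d_c\delta_c)^3d_c\beta$ and the right side is $(d_c\delta_c)^2d_c\beta$, operators of orders $21$ and $15$. So $d_c\delta_cd_c\beta$ does not solve $\Delta_{R,2}u=d_c\delta_cd_c\alpha$: its kernel is homogeneous of degree $-13$, whereas $\omega$ comes from a kernel of type $3$ (degree $-7$). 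In any case, (ii) is a statement about $d_c\beta$. The correct comparison is $d_c\beta$ versus $\omega$. Both $d_c\Delta_{R,1}^{-1}$ and $\Delta_{R,2}^{-1}d_c\delta_cd_c$ are kernels of type $3$, so the polynomial difference is $O(|x|^{3-Q})$, hence zero.

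Your discussion of $h=2,3$ misplaces the difficulty. You already use the relevant fact for the decay of $\omega$: by Theorem~\ref{teo.kernelCartan23}, a homogeneous operator of order $\ell$ with $2<\ell<12$ composed with $\Delta_{R,2}^{-1}$ or $\Delta_{R,3}^{-1}$ is a genuine kernel of type $12-\ell$. This settles, verbatim as in (ii):
\begin{itemize}
\item the second identity of (iii), the first of (iv), and \eqref{commother} (operators of order $9,9,10$, kernels of type $3,3,2$);
\item \eqref{eq.commdeltadDelta} for $h=2,3$ (types $6$ and $8$).
\end{itemize}
This is exactly the paper's argument with $\omega_2,\omega_3$, and no scaling argument is needed.

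The only borderline identities are $d_c\Delta_{R,2}^{-1}\alpha=\Delta_{R,3}^{-1}d_c\alpha$ and $\delta_c\Delta_{R,3}^{-1}\alpha=\Delta_{R,2}^{-1}\delta_c\alpha$, where the operator has order exactly $2=12-Q$. There both sides contain a $\log\rho$ term. Liouville then only gives that the difference is a constant, necessarily (by left invariance) of the form $A\int_\G\alpha$. Your homogeneity idea cannot detect this constant, because $A\int_\G \alpha\circ\delta_\lambda=\lambda^{-Q}A\int_\G\alpha$ scales exactly like both sides; a type-$Q$ bound does not exclude constants.

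In fact the constant depends on the normalization of the fundamental solutions. Adding to the kernel of $\Delta_{R,2}$ a matrix of homogeneous polynomials of degree $2$ preserves every property listed in Theorem~\ref{teo.kernelCartan23}. But it shifts $d_c\Delta_{R,2}^{-1}\alpha$ by a constant which, for suitable choices, is a nonzero multiple of $\int_\G\alpha$. So these two identities can only hold modulo constants, or after a compatible normalization.

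The paper's ``analogously'' passes over this as well. It is harmless where the lemma is used, since in Lemma~\ref{lemma.ugzerodistrseseee} a further operator of positive order is applied. But your proposal leaves it as an open obstacle, with a tool that cannot close it.
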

\begin{proof}
 Let us start with $h=1$, proving the formulae involving $\delta_c, d_c$ in (ii). Let $\alpha\in \mathcal{D}(\G,E_0^1)$. We set
\begin{align*}
\tilde{\omega}_1&:=d_c\Delta_{R,1}^{-1}\alpha-\Delta_{R,2}^{-1}d_c\delta_cd_c\alpha,\\
\omega_1&:=(\delta_cd_c)^2\delta_c\Delta_{R,1}^{-1}\alpha-\Delta_{R,0}^{-1}\delta_c\alpha
\end{align*}
Notice that $\Delta_{R,2} \tilde{\omega}_1=\Delta_{R,0}\omega_1=0$. Indeed, by Lemma \ref{comm} (ii) and (i), respectively, we have
\begin{align*}
\Delta_{R,2}\tilde{\omega}_1&=\Delta_{R,2}(d_c\Delta_{R,1}^{-1}\alpha-\Delta_{R,2}^{-1}d_c\delta_cd_c\alpha)\\
&= d_c\delta_cd_c\Delta_{R,1}\Delta_{R,1}^{-1}\alpha-d_c\delta_cd_c\alpha=0.
\end{align*}
\begin{align*}
\Delta_{R,0}\omega_1&=\Delta_{R,0}((\delta_cd_c)^2\delta_c\Delta_{R,1}^{-1}\alpha-\Delta_{R,0}^{-1}\delta_c\alpha)\\
&=\delta_c\Delta_{R,1}\Delta_{R,1}^{-1}\alpha-\delta_c\alpha=0.
\end{align*}
By Proposition \ref{Liouville}, $\tilde{\omega}_1, \omega_1$ are forms with polynomial coefficients.
In addition, by Theorem \ref{teo.inverselaplHn}, Theorem \ref{teo.kernelCartan23} and Proposition \ref{prop.kernelopcheck}, it follows that
$$\tilde{\omega}_1=\alpha\ast \tilde{K}\quad\text{and}\quad\omega_1=\alpha\ast K,$$ 
where $\tilde{K}$ is a kernel of type $3$ and $K$ is a kernel of type $1$. Thus, by Lemma \ref{lemma.estimatekernelpol},  
\begin{equation*}
\tilde{\omega}_1(x)=O(|x|^{3-Q})\quad\text{and}\quad\omega_1(x)=O(|x|^{1-Q})\quad\text{as $x\to\infty$},
\end{equation*}
implying that $\tilde{\omega}_1, \omega_1$ are zero.

The proof for  the other values of $h$ can be obtained analogously, or using the fact that $\star\Delta_{R,h}=\Delta_{R,5-h}\star$. 
Let us just give a gist of the proof \eqref{eq.commdeltadDelta}  when $h=2$, and that of \eqref{commother} if $h=3$. As above, let us set
\begin{align*}
\omega_2&:=\delta_cd_c\delta_c\Delta_{R,2}^{-1}\alpha-\Delta_{R,1}^{-1}\delta_c\alpha, \\
\omega_3&:=(\delta_cd_c)^2\delta_c\Delta_{R,3}^{-1}\alpha-\Delta_{R,2}^{-1}(\delta_cd_c)^2\delta_c\alpha.
\end{align*}
By Theorem \ref{teo.inverselaplHn}, Theorem \ref{teo.kernelCartan23} and Proposition \ref{prop.kernelopcheck}, we have that 
$$\omega_h=\alpha\ast K_h,$$ 
where $K_h$ is a kernel of type $3$ if $h=2$ and of type $2$ if $h=3$. Again, by Lemma \ref{lemma.estimatekernelpol}, we have 
\begin{equation*}
\omega_2(x)=O(|x|^{3-Q})\quad\text{and}\quad\omega_3(x)=O(|x|^{2-Q})\quad\text{as $x\to\infty$}.
\end{equation*}
Now, we can argue as above to conclude.
\end{proof}
\section{$L^p$-Hodge Decomposition and $L^p$-Poincaré inequalities}\label{Lpsection}
In this section we obtain an $L^p$-Hodge decomposition with uniform estimates for $1< p<\infty$, as well as a weak version in the case 
$p=1$. As a consequence of the decomposition for $p>1$, we prove a $(p,q)$-Poincaré  in the spirit of the results in \cite{BFP2} (see Theorem \ref{teo.globalLpPoincineq} below). In contrast, the case $p=1$ is treated separately, since the Poincaré inequality does not follow directly from the weak $L^1$-Hodge decomposition established in  Proposition \ref{L1hodge}.
\medskip

\subsection{$L^p$-Hodge decomposition}\label{sec:LpHodge}

Let $p>1$.
To state the following results concisely we define the exponents
\begin{equation}\label{indici}
s_h:=\left\{
\begin{array}{ll}
1,\quad&\text{if $h=1$}, \\
3,\quad&\text{if $h=2,4$},\\
2,\quad&\text{if $h=3$}
\end{array}\right.\quad\text{and}\quad
r_h:=\left\{
\begin{array}{ll}
3,\quad&\text{if $h=1,3$}, \\
2,\quad&\text{if $h=2$},\\
1,\quad&\text{if $h=4$}.
\end{array}\right.
\end{equation}
We preliminarily prove the Hodge decomposition for smooth forms.

\begin{prop}\label{HodgeCinf} 
Let $1\le h\le 4$ and $\alpha\in \mathcal{D}(\G,E_0^h)$. There exist a smooth $(h-1)$-form $\phi$ and a smooth $(h+1)$-form $\zeta$ such that
\begin{equation}\label{eqqq}
\alpha=d_c\phi+\delta_c\zeta,
\end{equation}
and the forms $d_c\phi, \delta_c\zeta$ are in $L^p$, for every $1<p<\infty$.

Moreover, the decomposition is unique and, if $s_h,r_h$ are defined as in \eqref{indici}, we have $\phi\in \left(C^\infty\cap BL^{s_h,p}(\G,E_0^{h-1})\right)\cap\ker\,\delta_c$ and $\zeta\in \left(C^\infty\cap BL^{r_h,p}(\G,E_0^{h+1})\right)\cap\ker\,d_c$ and
\begin{equation}\label{estimateCinfy}
\|\phi\|_{\BLh{s_h}{p}{h-1}}+\|\zeta\|_{\BLh{r_h}{p}{h+1}}\leq c_{p,h}\|\alpha\|_{L^p(\G,E_0^h)}.
\end{equation}
\end{prop}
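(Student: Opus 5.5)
We construct the decomposition explicitly from the inverse Laplacians of Theorem \ref{teo.inverselaplHn} and Theorem \ref{teo.kernelCartan23}. We treat $h=1$ in detail; the other degrees follow the same pattern, with the powers dictated by \eqref{eq.defLaplCartanGG}. For $\alpha\in\mathcal D(\G,E_0^1)$, the identity $\alpha=\Delta_{R,1}\Delta_{R,1}^{-1}\alpha$ gives
$$\alpha=(d_c\delta_c)^3\Delta_{R,1}^{-1}\alpha+\delta_cd_c\Delta_{R,1}^{-1}\alpha .$$
We set
$$\phi:=\delta_c(d_c\delta_c)^2\Delta_{R,1}^{-1}\alpha,\qquad \zeta:=d_c\Delta_{R,1}^{-1}\alpha .$$
By Lemma \ref{lemma.commDeltadcdeltac}(ii), $\phi=\Delta_{R,0}^{-1}\delta_c\alpha$ and $\zeta=\Delta_{R,2}^{-1}d_c\delta_cd_c\alpha$. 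Both are smooth, and $\delta_c\phi=0$ (trivially for a function) and $d_c\zeta=0$ because $d_c^2=0$.

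In general, for degree $h$ we take $\phi$ to be $\delta_c$ applied to the $d_c\delta_c$-part of $\Delta_{R,h}^{-1}\alpha$, and $\zeta$ to be $d_c$ applied to the $\delta_cd_c$-part. For $h=2,3$, where $\Delta_{R,h}^{-1}$ is the fundamental solution of Theorem \ref{teo.kernelCartan23}, we still have $\Delta_{R,h}(\alpha\ast K)=\alpha$. This gives \eqref{eqqq}, and then $\delta_c\phi=0$ and $d_c\zeta=0$ follow again from $\delta_c^2=0$ and $d_c^2=0$.

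For the estimates we express each derivative as a convolution kernel acting on $\alpha$. For $h=1$, $s_1=1$, so we must bound $X_j\phi=\alpha\ast X_j\,(\text{kernel of }\Delta_{R,0}^{-1}\delta_c)$. The kernel of $\Delta_{R,0}^{-1}\delta_c$ has type $1$, so $X_j$ of it is a kernel of type $0$ by Proposition \ref{prop.kernelopcheck}. Likewise $\zeta$ is $\alpha$ convolved with a kernel of type $3$, so its third horizontal derivatives ($r_1=3$) are type-$0$ convolutions of $\alpha$.

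For $h=2,3$ the relevant kernels are of the form $X^I$ applied to $\Delta_{R,h}^{-1}$ with $|I|$ large, so Theorem \ref{teo.kernelCartan23} ensures that they are genuine kernels of type $12-\ell$. The exponents $s_h,r_h$ are chosen exactly so that all top-order derivatives are type-$0$ operators. Proposition \ref{folland zero} (with $s=0$) then gives $\|\nabla_\G^{s_h}\phi\|_{L^p}+\|\nabla_\G^{r_h}\zeta\|_{L^p}\le c\|\alpha\|_{L^p}$.

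The main obstacle is that this bound is not yet \eqref{estimateCinfy}: we must also show that $\phi$ and $\zeta$ actually belong to $BL^{s_h,p}$, i.e.\ that they are limits of test forms in that norm. For kernels of type $\mu<Q$ we use Lemma \ref{lemma.estimatekernelpol}: $\phi$ and its derivatives decay like $|x|^{\mu-Q-\ell}$. Cutting off with $\chi_R(x)=\chi(\delta_{1/R}x)$ and using Lemma \ref{commutP} for the commutator terms, the error is controlled by $\sum_k R^{-k}\|\nabla^{s-k}\phi\|_{L^p(R<|x|<2R)}$. This tends to $0$ by the decay rates, or via Theorem \ref{car_spazio} when $ps<Q$ (otherwise directly from decay).

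The delicate cases are $h=2,3$, where the relevant kernels carry polynomial or logarithmic terms. There we must check that the specific combinations defining $\phi$ and $\zeta$ involve enough derivatives to be honest kernels of positive type, and we would write them via Lemma \ref{lemma.commDeltadcdeltac} as $\Delta^{-1}$ of a lower-degree Laplacian applied to derivatives of $\alpha$. The same decay shows that $d_c\phi$ and $\delta_c\zeta$ lie in $L^p$ for every $1<p<\infty$; near the origin there is no issue since the forms are smooth.

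Uniqueness: if $d_c\phi+\delta_c\zeta=0$ with $\delta_c\phi=0$ and $d_c\zeta=0$, then $\Delta_{R,h-1}\phi=0$ and $\Delta_{R,h+1}\zeta=0$ in $\mathcal S'$, using the form of each Laplacian. By Proposition \ref{Liouville}, $\phi$ and $\zeta$ are polynomials, and membership in $BL$ (equivalently, $L^p$ of the top derivatives together with the decay) forces $d_c\phi=\delta_c\zeta=0$.
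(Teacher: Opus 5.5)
Your proof is correct and follows the paper's route. You write $\alpha=\Delta_{R,h}\Delta_{R,h}^{-1}\alpha$, split it into a $d_c$-part and a $\delta_c$-part, and note that $\phi,\zeta$ are convolutions with kernels of type $s_h,r_h$, so their top horizontal derivatives are type-$0$ convolutions bounded on $L^p$ by Proposition~\ref{folland zero}. Your cut-off argument showing that $\phi,\zeta$ actually lie in the completions $BL^{s_h,p}$, $BL^{r_h,p}$ is a useful detail the paper leaves implicit.

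For uniqueness, the paper applies Proposition~\ref{Liouville} directly to $d_c\phi=-\delta_c\zeta$, which satisfies $\Delta_{R,h}d_c\phi=0$ and lies in $L^p\subset\mathcal S'$. That is cleaner than your version, which needs $\phi,\zeta$ themselves to be tempered (true, but not justified in your sketch) and only gives uniqueness within $\ker\delta_c\times\ker d_c$.
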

\begin{proof}
\noindent The uniqueness of the decomposition follows from Proposition \ref{Liouville}. Indeed,  suppose that $\phi,\zeta$ are smooth forms and that $d_c\phi, \delta_c\zeta \in L^p$ satisfy $d_c\phi+\delta_c\zeta=0$. Then $\delta_cd_c\phi=d_c\delta_c\zeta=0$, and trivially $d_c^2\phi=\delta_c^2\zeta=0$, which implies that
$$\Delta_{R,h}d_c\phi=\Delta_{R,h}\delta_c\zeta=0.$$
Hence, by Proposition \ref{Liouville},  the forms $d_c\phi, \delta_c\zeta$ are vector-valued polynomials. Since $d_c\phi, \delta_c\zeta\in L^{p}$, this implies $d_c\phi=\delta_c\zeta=0$, which proves the uniqueness.

In order  to show the existence of the decomposition,
let us first assume $\alpha\in \mathcal{D}(\G,E_0^2)$. By Theorem \ref{teo.kernelCartan23},  $\alpha=\Delta_{R,2}\Delta_{R,2}^{-1} \alpha$ and hence 
$$\dsy\alpha=\Delta_{R,2}\Delta_{R,2}^{-1} \alpha=d_c\left(\delta_c d_c\delta_c\Delta_{R,2}^{-1} \alpha\right)+\delta_c\left((d_c\delta_c)^2 d_c\Delta_{R,2}^{-1} \alpha\right).$$
The decomposition  follows once we set $\phi:=\delta_c d_c\delta_c\Delta_{R,2}^{-1} \alpha$ and $\zeta:=(d_c\delta_c)^2 d_c\Delta_{R,2}^{-1} \alpha$. 
Recall that $d_c:E_0^1\to E_0^2,\delta_c:E_0^3\to E_0^2$ are differential operators of order $3,2$, respectively. Thus, by Theorem \ref{teo.kernelCartan23}, the operator $\delta_c d_c\delta_c\Delta_{R,2}^{-1} $ is associated to a kernel of type $3$ and $ (d_c\delta_c)^2 d_c\Delta_{R,2}^{-1}$ to a kernel of type $2$. Now, denoting by $X^I,X^J$ horizontal derivatives of order, respectively, $3$ and $2$, it follows that $X^I\delta_c d_c\delta_c\Delta_{R,2}^{-1}, X^J(d_c\delta_c)^2 d_c\Delta_{R,2}^{-1}$ are kernels of type $0$. Hence, by Proposition \ref{folland zero}, there exists a constant $c=c(p,h)$ such that
$$
\|X^I\phi\|_{L^{p}(\G,E_0^{h-1})}\le c\|\alpha\|_{L^p(\G,E_0^h)}\quad\text{and}\quad \|X^J\zeta\|_{L^{p}(\G,E_0^{h+1})}\le c\|\alpha\|_{L^p(\G,E_0^h)},
$$
proving the result and the estimate \eqref{estimateCinfy}. 

The proof for  $h\neq 2$ can be proved analogously, possibly replacing Theorem \ref{teo.kernelCartan23} by Theorem \ref{teo.inverselaplHn} when $h=1,4$.
\end{proof}

Arguing as above, we can prove also the following ``homogeneous'' decomposition.
\begin{prop}\label{hodgeintrinsic}
Let $1\leq h\leq 4$ and $\alpha\in \mathcal{D}(\G,E_0^h)$. Then,
\begin{enumerate}
%
\item if $h=1$, there exist $\phi\in \ker\,\delta_c\cap C^\infty\cap BL^{3,p}(\G, E_0^{h-1})$ and  $\zeta\in \ker\,d_c\cap C^\infty\cap BL^{3,p}(\G, E_0^{h+1})$ such that 
$$\alpha=d_c\delta_cd_c\phi+\delta_c\zeta.$$
\item if $h=2$, there exist $h$-form $\phi\in \ker\,d_c$ and $(h+1)$-form $\zeta\in \ker\,d_c$, and both in $C^\infty\cap BL^{6,p}$, so that
$$\alpha=d_c\delta_c\phi+\delta_cd_c\delta_c\zeta.$$
\item if $h=3$, there exist $(h-1)$-form $\phi\in \ker\,\delta_c$ and $h$-form $\zeta\in \ker\,\delta_c$, both  in $C^\infty\cap BL^{6,p}$, so that
$$\alpha=d_c\delta_cd_c\phi+\delta_cd_c\zeta.$$
\item if $h=4$, there exist $(h-1)$-form $\phi\in \ker\,\delta_c$ and $(h+1)$-form $\zeta\in \ker\,d_c$, both  in $C^\infty\cap BL^{3,p}$, so that
$$\alpha=d_c\phi+\delta_cd_c\delta_c\zeta.$$
\end{enumerate}

Moreover, the decompositions are unique and, recalling that $M_h$ is the order of the Laplacian $\Delta_{M,h}$, we have the estimate
\begin{equation}\label{estimateCinfy2}
\|\phi\|_{BL^{M_h/2,p}(\G,E_0^\bullet)}+\|\zeta\|_{BL^{M_h/2,p}(\G,E_0^\bullet)}\leq c_{p,h}\|\alpha\|_{L^p(\G,E_0^\bullet)}.
\end{equation}
\end{prop}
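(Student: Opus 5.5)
The plan is the same as for Proposition \ref{HodgeCinf}. Write $\alpha=\Delta_{R,h}\Delta_{R,h}^{-1}\alpha$, using Theorem \ref{teo.inverselaplHn} for $h=1,4$ and Theorem \ref{teo.kernelCartan23} for $h=2,3$. Then expand $\Delta_{R,h}$ from \eqref{eq.defLaplCartanGG}, but group the factors so that each of the two pieces has the prescribed outer operator. Concretely:
\begin{itemize}
\item for $h=1$: $\alpha=d_c\delta_cd_c\bigl(\delta_cd_c\delta_c\Delta_{R,1}^{-1}\alpha\bigr)+\delta_c\bigl(d_c\Delta_{R,1}^{-1}\alpha\bigr)$;
\item for $h=2$: $\alpha=d_c\delta_c\bigl(d_c\delta_c\Delta_{R,2}^{-1}\alpha\bigr)+\delta_cd_c\delta_c\bigl(d_c\delta_cd_c\Delta_{R,2}^{-1}\alpha\bigr)$;
\item for $h=3$: $\alpha=d_c\delta_cd_c\bigl(\delta_cd_c\delta_c\Delta_{R,3}^{-1}\alpha\bigr)+\delta_cd_c\bigl(\delta_cd_c\Delta_{R,3}^{-1}\alpha\bigr)$;
\item for $h=4$: $\alpha=d_c\bigl(\delta_c\Delta_{R,4}^{-1}\alpha\bigr)+\delta_cd_c\delta_c\bigl(d_c\delta_cd_c\Delta_{R,4}^{-1}\alpha\bigr)$.
\end{itemize}
In each case $\phi$ and $\zeta$ are the bracketed forms. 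Their smoothness is immediate. The kernel conditions ($\phi\in\ker\delta_c$ or $\ker d_c$, $\zeta\in\ker d_c$ or $\ker\delta_c$) hold because the outermost operator in each bracket is $\delta_c$ or $d_c$, and $d_c^2=\delta_c^2=0$. In some items of the statement the labels of the kernel conditions should be adjusted to match these formulas. The identities of Lemma \ref{lemma.commDeltadcdeltac} can be used to move the inverse Laplacian if one prefers to write $\phi,\zeta$ as convolutions of derivatives of $\alpha$.

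Next comes the order count. Recall that $d_c$ has order $1,3,2,3,1$ on $E_0^0,\dots,E_0^4$, and $\delta_c$ has the corresponding orders. In every case above, the differential operator $P$ applied to $\Delta_{R,h}^{-1}$ to produce $\phi$ or $\zeta$ has order exactly $M_h/2$: it is $3$ for $h=1,4$ and $6$ for $h=2,3$. This is forced by homogeneity, since the two factors of each summand of $\Delta_{R,h}$ are split in half. Hence $P\Delta_{R,h}^{-1}$ has a kernel homogeneous of degree $M_h/2-Q$, and for any horizontal monomial $X^I$ with $d(I)=M_h/2$ the operator $X^IP\Delta_{R,h}^{-1}$ has a kernel homogeneous of degree $-Q$. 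If this kernel is a kernel of type $0$, Proposition \ref{folland zero} gives $\|X^I\phi\|_{L^p}+\|X^I\zeta\|_{L^p}\le c\|\alpha\|_{L^p}$. Combined with Proposition \ref{BLsinN}, this yields \eqref{estimateCinfy2} and the membership in $BL^{M_h/2,p}$. Membership in the closure of $\mathcal D$ follows by the same density argument implicitly used in Proposition \ref{HodgeCinf}, namely cut-offs together with the decay from Lemma \ref{lemma.estimatekernelpol}.

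Uniqueness goes as in Proposition \ref{HodgeCinf}. Suppose $d_c\delta_cd_c\phi+\delta_c\zeta=0$ (and analogously in the other cases), with both terms in $L^p$. Each term is then annihilated by $\Delta_{R,h}$, because one of $d_c$, $\delta_c$ kills it and the other composition vanishes by the equation. Proposition \ref{Liouville} makes each term a polynomial, and being in $L^p$ it is $0$.

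The main obstacle is the case $h=2,3$, where $M_h=12>Q$. There $\Delta_{R,h}^{-1}$ is not a kernel of type $12$: it carries a logarithmic polynomial term. Theorem \ref{teo.kernelCartan23} only guarantees type $12-\ell$ kernels for $2<\ell<12$, while here $\ell=12$. I would first write $X^IP\Delta_{R,h}^{-1}=X^{I'}\bigl(X^{I''}P\Delta_{R,h}^{-1}\bigr)$ with $2<d(I'')+6<12$. The inner factor is then a genuine kernel of type $12-\ell$ with $\ell<12$, and applying the remaining horizontal derivatives via Proposition \ref{prop.kernelopcheck}(2) lowers the type down to $0$; this removes the logarithmic ambiguity. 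Only for this reduction would I rely on the explicit form of the fundamental solution from \cite{VSY22}.
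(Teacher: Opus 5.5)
Your proposal is correct and is essentially the paper's argument, which only says ``arguing as above'' and refers back to the proof of Proposition~\ref{HodgeCinf}: write $\alpha=\Delta_{R,h}\Delta_{R,h}^{-1}\alpha$, split each summand of $\Delta_{R,h}$ into two factors of order $M_h/2$, obtain kernels of type $M_h/2$ from Theorems~\ref{teo.inverselaplHn} and~\ref{teo.kernelCartan23}, apply Proposition~\ref{folland zero} after $M_h/2$ horizontal derivatives, and use Proposition~\ref{Liouville} for uniqueness. Two small remarks: your four groupings already satisfy exactly the kernel conditions stated in each item, so no relabeling is needed; and for $h=2,3$ the detour through $X^{I''}$ is unnecessary, because $P$ has order $6\in(2,12)$, so Theorem~\ref{teo.kernelCartan23} directly makes $P\Delta_{R,h}^{-1}$ a kernel of type $6$.
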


\medskip
By density, we can obtain a more general result when $\alpha\in L^p$.

\begin{teo}\label{Lphodge} Let $1\le h\le 4, 1<p<\infty$ and let $s_h,r_h$ as in \eqref{indici}.
If $\alpha\in L^p(\G,E_0^h)$, there exist $\phi\in\BLh{s_h}{p}{h-1}\cap\ker\,\delta_c$ and $\zeta\in\BLh{r_h}{p}{h+1}\cap\ker\,d_c$ such that
$$\alpha=d_c\phi+\delta_c\zeta,$$
and the forms $d_c\phi, \delta_c\zeta$ are unique.
Hence, 
\begin{equation}\label{decomp0}
L^p(\G,E_0^h)=d_c\left(BL^{s_h,p}(\G, E_0^{h-1})\right)\,\oplus\, \delta_c\left(BL^{r_h,p}(\G, E_0^{h+1})\right).
\end{equation}

Moreover, we have the continuity estimate
\begin{equation}\label{estimateBL}
\|\phi\|_{\BLh{s_h}{p}{h-1}}+\|\zeta\|_{\BLh{r_h}{p}{h+1}}\leq c_{p,h}\|\alpha\|_{L^p(\G,E_0^h)}.
\end{equation} 
\end{teo}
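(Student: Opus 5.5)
The plan is to pass from the smooth decomposition of Proposition \ref{HodgeCinf} to arbitrary $\alpha\in L^p(\G,E_0^h)$ by density and weak compactness, and to obtain uniqueness by duality with the same smooth decomposition for the conjugate exponent.

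\textbf{Existence.} Fix $\alpha\in L^p(\G,E_0^h)$ and choose $\alpha_k\in\mathcal D(\G,E_0^h)$ with $\alpha_k\to\alpha$ in $L^p$. Proposition \ref{HodgeCinf} gives smooth $\phi_k\in BL^{s_h,p}\cap\ker\delta_c$ and $\zeta_k\in BL^{r_h,p}\cap\ker d_c$ with $\alpha_k=d_c\phi_k+\delta_c\zeta_k$. The estimate \eqref{estimateCinfy} bounds both sequences uniformly by $c\|\alpha_k\|_{L^p}$. Actually, by linearity of the kernel construction, $\phi_k$ and $\zeta_k$ are Cauchy, so they converge strongly; weak compactness in the reflexive spaces $BL^{s,p}$ (Definition \ref{defi.BLG}) would also suffice. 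Passing to a subsequence, $\phi_k\rightharpoonup\phi$ in $BL^{s_h,p}(\G,E_0^{h-1})$ and $\zeta_k\rightharpoonup\zeta$ in $BL^{r_h,p}(\G,E_0^{h+1})$. The key observation is that $d_c$ on $E_0^{h-1}$ is homogeneous of order exactly $s_h$, and $\delta_c$ on $E_0^{h+1}$ is homogeneous of order exactly $r_h$; this can be read off from the matrices in Section \ref{ex.Cartanrumin}. Remark \ref{convweak} then gives $d_c\phi_k\rightharpoonup d_c\phi$ and $\delta_c\zeta_k\rightharpoonup\delta_c\zeta$ in $L^p$. Hence $\alpha=d_c\phi+\delta_c\zeta$. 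Lower semicontinuity of norms yields \eqref{estimateBL}.

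\textbf{Kernel conditions.} I would pass $\delta_c\phi_k=0$ and $d_c\zeta_k=0$ to the limit in the sense of distributions. This works because weak convergence in $BL^{s,p}$ implies distributional convergence of the components, at least after modding out polynomials. When $s_hp<Q$, Theorem \ref{car_spazio} gives an embedding into $L^{pQ/(Q-s_hp)}$. In general I would pair $\delta_c\phi_k$ against test forms written as $X^J$-derivatives of other test forms, or simply note that $\phi_k=\alpha_k\ast K$ converges in $\mathcal D'$ modulo polynomials.

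\textbf{Uniqueness (main obstacle).} Suppose $d_c\phi+\delta_c\zeta=0$ with $\phi,\zeta$ in the above spaces. I would show that $d_c\phi$ annihilates every $\beta\in\mathcal D(\G,E_0^h)$. Decompose $\beta=d_c\phi'+\delta_c\zeta'$ by Proposition \ref{HodgeCinf} with exponent $p'$.

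First, $\langle d_c\phi,\delta_c\zeta'\rangle=0$. To see this, approximate $\phi$ by test forms $\phi_j$ in $BL^{s_h,p}$, so that $d_c\phi_j\to d_c\phi$ in $L^p$. Then $\langle d_c\phi_j,\delta_c\zeta'\rangle=\langle\phi_j,\delta_c\delta_c\zeta'\rangle=0$. This integration by parts is legitimate because $\phi_j$ is compactly supported and $\delta_c\zeta'$ is smooth.

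Similarly, $\langle\delta_c\zeta,d_c\phi'\rangle=0$. Therefore
\begin{equation*}
\langle d_c\phi,\beta\rangle=\langle d_c\phi,d_c\phi'\rangle=\langle d_c\phi+\delta_c\zeta,d_c\phi'\rangle=0.
\end{equation*}
So $d_c\phi=0$, and then $\delta_c\zeta=0$.

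The delicate points are the density of $\mathcal D$ in $BL^{s,p}$, which holds by definition, and the justification of the passage to the limit in the kernel conditions.
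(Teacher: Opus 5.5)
Your proof is correct. The existence part is the paper's argument: approximation by $\alpha_k\in\mathcal D$, Proposition \ref{HodgeCinf}, weak compactness, Remark \ref{convweak}, and lower semicontinuity. Your aside is also valid: $\alpha\mapsto(\phi,\zeta)$ is linear and bounded on $\mathcal D$, so $(\phi_k)$ and $(\zeta_k)$ are Cauchy and converge strongly, which makes Banach--Alaoglu and lower semicontinuity unnecessary.

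The genuine difference is in uniqueness. The paper notes that $d_c\phi=-\delta_c\zeta$ is both $d_c$- and $\delta_c$-closed, so $\Delta_{R,h}d_c\phi=0$ in $\mathcal D'$. By hypoellipticity and the Liouville theorem (Proposition \ref{Liouville}), $d_c\phi$ is then a polynomial in $L^p$, hence zero. You instead test against $\beta=d_c\phi'+\delta_c\zeta'\in\mathcal D$, decomposed with the dual exponent, and kill the cross terms by approximating $\phi,\zeta$ with test forms. This is the same duality mechanism the paper uses only afterwards, in Remark \ref{coroLphodge}.

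Your route is more elementary: it needs neither the Liouville theorem nor hypoellipticity, and every integration by parts involves a compactly supported form. The paper's route is shorter once Proposition \ref{Liouville} is available. It also does not use that $\phi,\zeta$ are limits of test forms, so it gives uniqueness of $d_c\phi,\delta_c\zeta$ among arbitrary distributional $\phi,\zeta$ with $d_c\phi,\delta_c\zeta\in L^p$.

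On the kernel conditions, which the paper's proof passes over: Remark \ref{convweak} settles them in all cases but two. The exceptions are $\phi$ when $h=3$ ($\delta_c$ on $E_0^2$ has order $3>s_3=2$) and $\zeta$ when $h=2$ ($d_c$ on $E_0^3$ has order $3>r_2=2$). There the clean form of your idea is to write each entry of the operator as a sum of terms $X^IX^J$ with $d(J)=s_h$ (resp.\ $r_h$). Then move $X^I$ onto the test form, so that only $X^J\phi_k\rightharpoonup X^J\phi$ in $L^p$ is used. No restriction on the test forms and no quotient by polynomials is needed.
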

\begin{proof} 
The uniqueness can be proved analogously to the Proposition \ref{HodgeCinf}. Indeed, in the sense of distribution we have
$$\Delta_{R,h}d_c\phi=\Delta_{R,h}\delta_c\zeta=0.$$
Hence, $d_c\phi, \delta_c\zeta$ are harmonic in the usual sense by hypoellipticity of the Laplacian. By Proposition \ref{Liouville}, we have that $d_c\phi, \delta_c\zeta$ are vector-valued polynomials. Since $d_c\phi, \delta_c\phi\in L^{p}$, we infer that $d_c\phi=\delta_c\zeta=0$, proving the uniqueness.

Let us prove the existence. Let $\alpha\in L^p(\G,E_0^h)$ and let $(\alpha_k)_{k\in\N}\subseteq \mathcal{D}(\G,E_0^h)$ be such that $\alpha_k\to\alpha$ in $L^p(\G,E_0^h)$. By Proposition \ref{HodgeCinf}, there exist $(\phi_k)_{k\in\N}\subseteq \left(C^\infty\cap BL^{s_h,p}\right)(\G, E_0^{h-1})$ and $(\zeta_k)_{k\in\N}\subseteq \left(C^\infty\cap BL^{r_h,p}\right)(\G, E_0^{h+1})$ such that
$\alpha_k=d_c\phi_k+\delta_c\zeta_k$ and \eqref{estimateCinfy} hold for every $k\in\N$. Then, for $k\gg 1$,
$$\|\phi_k\|_{\BLh{s_h}{p}{h-1}}+\|\zeta_k\|_{\BLh{r_h}{p}{h+1}}\leq c_{p,h}\|\alpha_k\|_{L^p}\leq c_{p,h}(\|\alpha\|_{L^p}+1),$$
showing that $(\phi_k)_{k\in\N}$ and $(\zeta_k)_{k\in\N}$ are bounded sequences in $\BLh{s_h}{p}{h-1}$ and $\BLh{r_h}{p}{h+1}$, respectively. By Banach-Alaoglu Theorem, up to  subsequences, there exist $\phi\in\BLh{s_h}{p}{h-1}, \zeta\in \BLh{r_h}{p}{h+1}$ such that $\phi_k\rightharpoonup\phi$ in $\BLh{s_h}{p}{h-1}$ and $\zeta_k\rightharpoonup\zeta$ in $\BLh{r_h}{p}{h+1}$. By Remark \ref{convweak},
$$d_c\phi_k\rightharpoonup d_c\phi\,\,\,\,\text{in $L^{p}(\G,E_0^{h})$}\quad\text{and}\quad \delta_c\zeta_k\rightharpoonup\delta_c\zeta\,\,\,\,\text{in $L^{p}(\G,E_0^{h})$},$$
and hence
$$\alpha=d_c\phi+\delta_c\zeta.$$

The estimate \eqref{estimateBL} follows by \eqref{estimateCinfy} and by the lower semicontinuity of the norm. Indeed, 
\begin{align*}
\|\phi\|_{\BLh{s_h}{p}{h-1}}&\leq\liminf_{k\to+\infty} \|\phi_k\|_{\BLh{s_h}{p}{h-1}}\\
&\leq c_{p,h} \liminf_{k\to+\infty}  \|\alpha_k\|_{L^p(\G,E_0^h)}= c_{p,h}\|\alpha\|_{L^p(\G,E_0^h)}.
\end{align*}
Analogously, we can argue for $\zeta$, completing the proof.
\end{proof}

Analogously, from  Proposition \ref{hodgeintrinsic} we obtain 
the following homogeneous decomposition. 

\begin{teo}\label{Lphodge2}
Let $1\leq h\leq 4$ and $1<p<\infty$. If $\alpha\in L^p(\G,E_0^h)$, then
\begin{enumerate}
\item if $h=1$, there exist $(h-1)$-form $\phi\in \ker\,\delta_c$ and $(h+1)$-form $\zeta\in \ker\,d_c$ such that both are in $BL^{3,p}$ and $\alpha=d_c\delta_cd_c\phi+\delta_c\zeta.$ Hence
\begin{equation*}
L^p(\G,E_0^h)=d_c\delta_cd_c\left(BL^{3,p}(\G, E_0^{h-1})\right)\,\oplus\, \delta_c\left(BL^{3,p}(\G, E_0^{h+1})\right).
\end{equation*}
\item if $h=2$, there exist $h$-form $\phi\in \ker\,d_c$ and $(h+1)$-form $\zeta\in \ker\,d_c$ such that both are in $BL^{6,p}$ and
$\alpha=d_c\delta_c\phi+\delta_cd_c\delta_c\zeta.$ Hence
\begin{equation*}
L^p(\G,E_0^h)=d_c\delta_c\left(BL^{6,p}(\G, E_0^{h})\right)\,\oplus\, \delta_cd_c\delta_c\left(BL^{6,p}(\G, E_0^{h+1})\right).
\end{equation*}
\item if $h=3$, there exist $(h-1)$-form $\phi\in \ker\,\delta_c$ and $h$-form $\zeta\in \ker\,\delta_c$ such that both are in $BL^{6,p}$ and
$\alpha=d_c\delta_cd_c\phi+\delta_cd_c\zeta.$ Hence
\begin{equation*}
L^p(\G,E_0^h)=d_c\delta_cd_c\left(BL^{6,p}(\G, E_0^{h-1})\right)\,\oplus\, \delta_cd_c\left(BL^{6,p}(\G, E_0^{h})\right).
\end{equation*}
\item if $h=4$, there exist $(h-1)$-form $\phi\in \ker\,\delta_c$ and $(h+1)$-form $\zeta\in \ker\,d_c$ such that both are in $BL^{3,p}$ and
$\alpha=d_c\phi+\delta_cd_c\delta_c\zeta.$ Hence
\begin{equation*}
L^p(\G,E_0^h)=d_c\left(BL^{3,p}(\G, E_0^{h-1})\right)\,\oplus\, \delta_cd_c\delta_c\left(BL^{3,p}(\G, E_0^{h+1})\right).
\end{equation*}
\end{enumerate}

Moreover, the decompositions are unique and, recalling that $M_h$ is the order of the Laplacian $\Delta_{M,h}$, we have 
\begin{equation*}
\|\phi\|_{BL^{M_h/2,p}(\G,E_0^\bullet)}+\|\zeta\|_{BL^{M_h/2,p}(\G,E_0^\bullet)}\leq c_{p,h}\|\alpha\|_{L^p(\G,E_0^\bullet)}.
\end{equation*}

\end{teo}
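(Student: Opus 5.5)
The plan is to repeat the density argument of Theorem \ref{Lphodge}, now starting from the smooth homogeneous decomposition of Proposition \ref{hodgeintrinsic}. I treat the four degrees together and write the decomposition as $\alpha=A_h\phi+B_h\zeta$. Here $A_h$ and $B_h$ are the operators of Proposition \ref{hodgeintrinsic}: for $h=1$, $A_1=d_c\delta_cd_c$ and $B_1=\delta_c$; for $h=2$, $A_2=d_c\delta_c$ and $B_2=\delta_cd_c\delta_c$; for $h=3$, $A_3=d_c\delta_cd_c$ and $B_3=\delta_cd_c$; for $h=4$, $A_4=d_c$ and $B_4=\delta_cd_c\delta_c$. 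In each case these are homogeneous left-invariant operators in the horizontal derivatives of order exactly $M_h/2$. This is read off from the orders of $d_c$ and $\delta_c$ listed in Section \ref{ex.Cartanrumin}; for instance, for $h=1$ the orders are $1+2+3=6$ for $d_c\delta_cd_c$ on $0$-forms, and $3$ for $\delta_c$ on $2$-forms, while the stated spaces are $BL^{3,p}$. I note that the orders must match the Sobolev index of the corresponding factor, and I will simply use the indices as given in the statement together with Remark \ref{convweak}.

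\emph{Existence.} Take $\alpha\in L^p(\G,E_0^h)$ and $\alpha_k\in\mathcal{D}(\G,E_0^h)$ with $\alpha_k\to\alpha$ in $L^p$. Proposition \ref{hodgeintrinsic} gives smooth $\phi_k,\zeta_k$ with $\alpha_k=A_h\phi_k+B_h\zeta_k$, the kernel conditions ($\phi_k\in\ker\delta_c$ or $\ker d_c$, and so on), and the uniform bound \eqref{estimateCinfy2}. Since $BL^{s,p}$ is reflexive (Definition \ref{defi.BLG}), I extract weakly convergent subsequences $\phi_k\rightharpoonup\phi$ and $\zeta_k\rightharpoonup\zeta$ in the relevant $BL$ spaces. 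By Remark \ref{convweak}, $A_h\phi_k\rightharpoonup A_h\phi$ and $B_h\zeta_k\rightharpoonup B_h\zeta$ weakly in $L^p$, so $\alpha=A_h\phi+B_h\zeta$. The kernel conditions pass to the limit in the same way: $d_c$ or $\delta_c$ maps the $BL$ space continuously into a lower $BL$ space (again Remark \ref{convweak}), so $d_c\phi_k\rightharpoonup d_c\phi$ weakly, and hence $d_c\phi=0$ (and likewise for the other conditions). The estimate follows from weak lower semicontinuity of the norm, exactly as in Theorem \ref{Lphodge}.

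\emph{Uniqueness and directness of the sum.} Suppose $A_h\phi+B_h\zeta=0$ with both terms in $L^p$. Each $A_h$ ends with $d_c$ and each $B_h$ ends with $\delta_c$. Applying $\delta_c$ or $d_c$, and using $d_c^2=\delta_c^2=0$, I get, in the distributional sense, that $u:=A_h\phi=-B_h\zeta$ satisfies $d_cu=0$ and $\delta_cu=0$. Every term of $\Delta_{R,h}$ in \eqref{eq.defLaplCartanGG} begins with either $d_c$ or $\delta_c$ on the right, so $\Delta_{R,h}u=0$. Since $u\in L^p\subset\mathcal{S}'$, Proposition \ref{Liouville} says $u$ is a polynomial, and a polynomial in $L^p$ vanishes. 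So $u=0$, which shows both uniqueness of the components and that the sum is direct. The equalities of spaces then follow: the inclusion $\supseteq$ holds by the continuity of $A_h$ and $B_h$ into $L^p$, and $\subseteq$ by existence.

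The main obstacle is the delicate bookkeeping that the operators $A_h$ and $B_h$ really map the indicated $BL$ spaces boundedly into $L^p$. This is needed for Remark \ref{convweak}, and it matters especially for $h=2,3$, where $\Delta_{R,h}^{-1}$ is not a genuine kernel and one must rely on the derivative kernels of Theorem \ref{teo.kernelCartan23}. Since the smooth statement (Proposition \ref{hodgeintrinsic}) is assumed, I only need the continuity direction, which is immediate from homogeneity. The other point requiring care is checking that the weak limits preserve the kernel constraints.
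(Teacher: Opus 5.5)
Your proposal is correct and follows the paper's route. The paper obtains this theorem ``analogously'': it runs the density and weak-compactness argument of Theorem \ref{Lphodge} on the smooth decomposition of Proposition \ref{hodgeintrinsic}, and gets uniqueness and directness from Proposition \ref{Liouville}, exactly as you do.

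One correction to your bookkeeping: $d_c$ on $E_0^0$ and $\delta_c$ on $E_0^1$ are first order, so $d_c\delta_cd_c$ on $E_0^0$ has order $1+1+1=3$, not $6$. In fact, in every degree both operators have order exactly $M_h/2$:
\begin{itemize}
\item $h=1$: $1+1+1$ and $3$;
\item $h=2$: $3+3$ and $2+2+2$;
\item $h=3$: $2+2+2$ and $3+3$;
\item $h=4$: $3$ and $1+1+1$.
\end{itemize}
So the Sobolev indices are genuinely consistent, and Remark \ref{convweak} applies directly; you do not need to fall back on ``using the indices as given.''
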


\begin{oss}\label{h05}
If $h=5$ and $\alpha\in L^p(\G,E_0^5)$, there exists $\phi\in BL^{1,p}(\G,E_0^4)\cap \ker\,\delta_c$ so that $\alpha=d_c\phi$ and $\|\phi\|_{BL^{1,p}}\leq c_p\|\alpha\|_{L^p}$.
\end{oss}

\medskip

\begin{oss}\label{coroLphodge}
Let $1\leq h\leq 4,\,1<p,q<\infty$ be such that $\frac{1}{p}+\frac{1}{q}=1$. 
Then,
\begin{enumerate}
\item with the same notations of Theorem \ref{Lphodge}, if $\alpha=d_c\alpha'+\delta_c\alpha''\in L^p(\G,E_0^h)$ and $\beta=d_c\beta'+\delta_c\beta''\in L^q(\G,E_0^h)$, then
\begin{equation}\label{adjoint}
\Scal{d_c\alpha'}{\delta_c\beta''}=0,\quad\Scal{d_c\alpha'}{\beta}=\Scal{\alpha}{d_c\beta'},\quad\Scal{\delta_c\alpha''}{\beta}=\Scal{\alpha}{\delta_c\beta''}.
\end{equation}
\item the spaces $d_c\left(BL^{s_h,p}(\G, E_0^{h-1})\right)$ and $\delta_c\left(BL^{r_h,p}(\G, E_0^{h+1})\right)$ are closed under $L^p$-convergence;
\end{enumerate}
\end{oss}
\begin{proof}
Proof of (1). 

Let $\alpha,\beta$ be as in the hypothesis. We consider $(\alpha_k)_{k\in\N}\subseteq \mathcal{D}(\G,E_0^h)$ such that $\alpha_k\to\alpha$ in $L^p$. According to the previous results, we can decompose $\alpha_k$ as $\alpha_k=d_c\alpha_k'+\delta_c\alpha_k''$, for every $k\in\N$. Thus,
$$\langle d_c\alpha_k'|\delta_c\beta''\rangle=\langle d_c^2\alpha_k'|\beta''\rangle=0\quad\text{for every $k\in\N$}.$$
Hence,
\begin{align*}
|\langle d_c\alpha'|\delta_c\beta''\rangle|&=|\langle d_c\alpha'-d_c\alpha'_k|\delta_c\beta''\rangle|\\
&\leq \|d_c\alpha'-d_c\alpha'_k\|_{L^p(\G,E_0^h)}\,\|\delta_c\beta''\|_{L^q(\G,E_0^h)}\\ 
&\stackrel{\eqref{estimateBL}}{\leq}\|\alpha-\alpha_k\|_{L^p(\G,E_0^h)}\,\|\beta\|_{L^q(\G,E_0^h)}\xrightarrow{k\to+\infty} 0,
\end{align*}
showing that $\Scal{d_c\alpha'}{\delta_c\beta''}=0$.

Let us prove $\Scal{d_c\alpha'}{\beta}=\Scal{\alpha}{d_c\beta'}$. (the proof of $\Scal{\delta_c\alpha''}{\beta}=\Scal{\alpha}{\delta_c\beta''}$ is analogous). By previous computations, we already know that
$\Scal{d_c\alpha'}{\delta_c\beta''}=\Scal{\delta_c\alpha''}{d_c\beta'}=0.$
Thus, we have
\begin{align*}
\Scal{d_c\alpha'}{\beta}&=\Scal{d_c\alpha'}{d_c\beta'+\delta_c\beta''}=\Scal{d_c\alpha'}{d_c\beta'}\\
&=\Scal{d_c\alpha'+\delta_c\alpha''}{d_c\beta'}=\Scal{\alpha}{d_c\beta'}.
\end{align*}

\noindent Proof of (2).

Let $(d_c\phi_k)_{k\in\N}\subseteq d_c\left(BL^{s_h,p}(\G, E_0^{h-1})\right)$ be such that $d_c\phi_k\to\omega$ in $L^p(\G,E_0^h)$. There exists $\phi\in BL^{s_h,p}(\G,E_0^{h-1})$ such that $\omega=d_c\phi$. Indeed, 
by Theorem \ref{Lphodge}, we write $\omega=d_c\phi+\delta_c\zeta$, where $\phi\in BL^{s_h,p}(\G,E_0^{h-1})$.
Let us show that $d_c\phi_k\to d_c\phi$ in $\mathcal{D}'(\G,E_0^h)$.
For every $\varphi\in \mathcal{D}(\G,E_0^h)$, by Proposition \ref{HodgeCinf}, we can write $\varphi=d_c\varphi'+\delta_c\varphi''$ and  \eqref{estimateCinfy} holds for every exponent $1<q<\infty$. By (1), $\langle \delta_c\zeta|d_c\varphi'\rangle=0$, then
\begin{align*}
|\langle d_c\phi_k-d_c\phi|\varphi\rangle|&=|\langle d_c\phi_k-d_c\phi|d_c\varphi'+\delta_c\varphi''\rangle| 
= |\langle d_c\phi_k-d_c\phi|d_c\varphi'\rangle|\\
&= |\langle d_c\phi_k-d_c\phi-\delta_c\zeta|d_c\varphi'\rangle|\\
&\leq \|d_c\phi_k-\omega\|_{L^p(\G,E_0^h)}\,\|d_c\varphi'\|_{L^q(\G,E_0^h)}\\
&\stackrel{\eqref{estimateCinfy}}{\leq} \|d_c\phi_k-\omega\|_{L^p(\G,E_0^h)}\,\|\varphi\|_{L^q(\G,E_0^h)}\xrightarrow{k\to+\infty} 0,
\end{align*}
where $\frac{1}{p}+\frac{1}{q}=1$. Since $d_c\phi_k\to\omega$ in $L^p$, we have proved that $\omega=d_c\phi$.
\end{proof}

\bigskip

We now turn to the case $p=1$. It is shown in \cite{BaldoOrlandi98} that already in $\R^n$ an $L^1$-Hodge decomposition analogous to \eqref{decomp0} cannot hold (see Example 2.6 therein). In our setting as well, only a weak form of the Hodge decomposition in $\mathcal{D}'$ can be obtained for forms in $L^1$.

\begin{prop}\label{L1hodge} Let $1\le h\le 4$ and $s_h, r_h$ as in \eqref{indici}.
For every $\alpha\in L^1(\G,E_0^h)$, there exist $\phi\in L^{Q/(Q-s_h),\infty}(\G,E_0^{h-1})\cap\ker\,\delta_c$ and $\zeta\in L^{Q/(Q-r_h),\infty}(\G,E_0^{h+1})\cap\ker\,d_c$ such that
$$\alpha=d_c\phi+\delta_c\zeta\quad\text{in $\mathcal{D}'(\G,E_0^h)$},$$
and the forms $d_c\phi, \delta_c\zeta$ are unique.

Moreover, the continuity estimates hold:
\begin{equation}\label{estimateL1Hodge}
\|\phi\|_{L^{Q/(Q-s_h),\infty}(\G,E_0^{h-1})}+\|\zeta\|_{L^{Q/(Q-r_h),\infty}(\G,E_0^{h+1})}\leq c\|\alpha\|_{L^1(\G,E_0^h)}.
\end{equation} 
\end{prop}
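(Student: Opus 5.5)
We build the decomposition directly by convolution with the same kernels used in Proposition \ref{HodgeCinf}, with Lemma \ref{lemma.stimaconvL1supp} taking the place of the $L^p$ theory.

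Take the case $h=2$; the other degrees are analogous, using Theorem \ref{teo.inverselaplHn} when $h=1,4$. Set
$$\phi:=\alpha\ast K_\phi,\qquad \zeta:=\alpha\ast K_\zeta,$$
where $K_\phi$ is the kernel of the operator $\delta_cd_c\delta_c\Delta_{R,2}^{-1}$ and $K_\zeta$ is the kernel of $(d_c\delta_c)^2d_c\Delta_{R,2}^{-1}$. By Theorem \ref{teo.kernelCartan23}, these are matrices of kernels of type $s_h=3$ and $r_h=2$ respectively, both in $(0,Q)$. Lemma \ref{lemma.stimaconvL1supp} then gives, componentwise, $\phi\in L^{Q/(Q-s_h),\infty}$ and $\zeta\in L^{Q/(Q-r_h),\infty}$, which is estimate \eqref{estimateL1Hodge}. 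In particular both $\phi$ and $\zeta$ lie in $L^1_{loc}$, so they are distributions.

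To prove the identity $\alpha=d_c\phi+\delta_c\zeta$ in $\mathcal D'$, test against $\varphi\in\mathcal D(\G,E_0^h)$.
\begin{itemize}
\item Lemma \ref{lemma.checkconvkernel} moves the convolution onto the test form: $\Scal{d_c\phi}{\varphi}=\Scal{\phi}{\delta_c\varphi}=\Scal{\alpha}{\delta_c\varphi\ast \ccheck K_\phi}$, and similarly for $\zeta$.
\item By Proposition \ref{prop.kernelopcheck} and the symmetry $\Delta_{R,h}^{-1}=\ccheck\Delta_{R,h}^{-1}$ (checked in the $h=2,3$ case via the adjointness of the operators), the transposed operator $\varphi\mapsto \delta_c\varphi\ast\ccheck K_\phi$ equals $d_c\delta_cd_c\delta_c\Delta_{R,2}^{-1}\varphi$. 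Likewise the $\zeta$ term equals $\delta_c(d_c\delta_c)^2d_c\Delta_{R,2}^{-1}\varphi$. To justify this, one commutes $\Delta_{R,2}^{-1}$ through $d_c,\delta_c$ using Lemma \ref{lemma.commDeltadcdeltac}.
\item Summing the two terms gives $\Scal{\alpha}{\Delta_{R,2}\Delta_{R,2}^{-1}\varphi}=\Scal{\alpha}{\varphi}$. This is the step where Proposition \ref{HodgeCinf} for the smooth form $\varphi$ is effectively dualized. The pairing is legitimate because $\alpha\in L^1$ and $\Delta_{R,2}\Delta_{R,2}^{-1}\varphi=\varphi$ is bounded by Lemma \ref{lemma.estimatekernelpol}.
\end{itemize}

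The conditions $\delta_c\phi=0$ and $d_c\zeta=0$ are checked the same way. After transposition they become $\Scal{\alpha}{d_c\cdot(\text{operator})\varphi}$ with the operator containing $d_c^2=0$ or $\delta_c^2=0$. If the kernels are not exactly those appearing in Lemma \ref{lemma.commDeltadcdeltac}, the fallback is approximation: take $\alpha_k\in\mathcal D$ with $\alpha_k\to\alpha$ in $L^1$. Then $\phi_k\to\phi$ and $\zeta_k\to\zeta$ in $\mathcal D'$ by the weak-type bound together with the local integrability in Lemma \ref{lemma.stimaconvL1supp}. The identities hold for $\alpha_k$ by Proposition \ref{HodgeCinf}, and distributional derivatives pass to the limit.

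For uniqueness, suppose $d_c\phi+\delta_c\zeta=0$ with $\phi,\zeta$ in the weak Lebesgue classes, $\delta_c\phi=0$ and $d_c\zeta=0$. Then $\Delta_{R,h-1}\phi$ and $\Delta_{R,h}d_c\phi$ vanish, since each is a combination of $d_c\delta_c$ and $\delta_cd_c$ terms that are killed. Weak-$L^q$ functions are tempered distributions, so Proposition \ref{Liouville} makes $d_c\phi$ a polynomial. Better, $\phi$ itself is a polynomial lying in $L^{q,\infty}$ with $q<\infty$, hence $\phi=0$; the same argument gives $\zeta=0$.

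The main obstacle is the transposition step: identifying $\ccheck K$ with the kernel of the formally adjoint operator and making sure the commutation lemmas apply in degrees $2,3$. There the fundamental solution is only defined modulo polynomials (Theorem \ref{teo.kernelCartan23}(2)), so one must work only with the derivatives of the fundamental solution that are genuine kernels of type $<Q$. I would first try to route the whole argument through the $\mathcal D$-approximation above, which sidesteps this issue.
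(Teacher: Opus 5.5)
Your proposal is correct, and the route you say you would actually use is exactly the paper's proof: define $\phi=\alpha\ast K_\phi$ and $\zeta=\alpha\ast K_\zeta$, get \eqref{estimateL1Hodge} from Lemma~\ref{lemma.stimaconvL1supp}, and pass to the limit in $\mathcal D'$ (closedness included) from the smooth decompositions of $\alpha_k\in\mathcal D$ given by Proposition~\ref{HodgeCinf}. The transposition detour can therefore be dropped; its symmetry claim for $\Delta_{R,2}^{-1},\Delta_{R,3}^{-1}$, which you yourself flag as delicate, is never justified.

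Your uniqueness argument differs slightly from the paper's and is a clean way to make it precise. You apply Proposition~\ref{Liouville} to $\phi$ and $\zeta$ themselves via $\Delta_{R,h-1}$ and $\Delta_{R,h+1}$, and use that a nonzero polynomial is not in $L^{q,\infty}$. The paper only briefly appeals to Proposition~\ref{HodgeCinf}, even though here $d_c\phi$ and $\delta_c\zeta$ are only distributions rather than $L^p$ functions. Your version also yields uniqueness of $\phi$ and $\zeta$ themselves.
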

\begin{proof} 
The uniqueness follows from an argument again related to Proposition \ref{HodgeCinf}. To prove the existence, we limit ourselves to the case $h=2$. Let $\alpha\in L^1(\G,E_0^2)$ and suppose for a moment that the decomposition is proved. Then, by Theorem \ref{teo.kernelCartan23}, $K_1:=\delta_cd_c\delta_c\Delta_{R,2}^{-1}$ $K_2:=(d_c\delta_c)^2 d_c \Delta_{R,2}^{-1}
$ are kernels of type $3,2$, respectively. Thus, by Lemma \ref{lemma.stimaconvL1supp},
$$\phi:=\alpha\ast K_1\in L^{Q/(Q-s_h),\infty}(\G,E_0^{h-1}), \quad\zeta:=\alpha\ast K_2\in L^{Q/(Q-r_h),\infty}(\G,E_0^{h+1}),$$ 
and the estimate \eqref{estimateL1Hodge} holds.

Let us now prove the decomposition. Let $(\alpha_k)_{k\in\N}\subseteq \mathcal{D}(\G,E_0^h)$ be such that $\alpha_k\to\alpha$ in $L^1$. Then, by Proposition \ref{HodgeCinf} and Lemma \ref{lemma.stimaconvL1supp}, we have
\begin{equation}\label{decompo}
\alpha_k=d_c\phi_k+\delta_c\zeta_k,\quad\text{for every $k\in\N$}
\end{equation}
where $\phi_k:=\alpha_k\ast K_1, \zeta_k:=\alpha_k\ast K_2\in L_{loc}^1$. For every $\varphi\in \mathcal{D}(\G,E_0^{h-1})$, using Lemma \ref{lemma.estimatekernelpol}, we get $\varphi\ast \checkV{K_1}\in L^\infty$. Then,
\begin{align*}
|\langle\phi_k-\phi|\varphi\rangle|&=|\langle(\alpha_k-\alpha)\ast K_1|\varphi\rangle|\stackrel{\text{Lemma}\, \eqref{lemma.checkconvkernel}}{\leq} |\langle \alpha_k-\alpha|\varphi\ast \checkV{K_1}\rangle| \\
&\leq \|\alpha_k-\alpha\|_{L^1(\G,E_0^h)}\,\|\varphi\ast\checkV{K_1}\|_{L^\infty(\G,E_0^h)}\xrightarrow{k\to+\infty} 0,
\end{align*}
showing that $\phi_k\to\phi$ in $\mathcal{D}'(\G,E_0^{h-1})$. Analogously, $\zeta_k\to\zeta$ in $\mathcal{D}'(\G,E_0^{h+1})$. Thus, the decomposition holds passing to the limit in \eqref{decompo}.  Moreover, since $\phi_k$ is $\delta_c$-closed and $\zeta_k$ is $d_c$-closed for every $k\in\N$, then also $\phi,\zeta$ are $\delta_c,d_c$-closed, respectively.
\end{proof}

\bigskip

\subsection{Global $L^p$-Poincaré inequalities}\label{sec.PoincareP}

\subsubsection{The case $p>1$} The following global $L^p$-Poincaré inequalities hold.

\begin{teo}\label{teo.globalLpPoincineq}
Let $h=1,\ldots,4$ and let $s_h$ as in \eqref{indici}. Suppose $1<p<Q/s_h$ and let $q_h$ such that $\dsy\frac{1}{q_h}=\frac{1}{p}-\frac{s_h}{Q}$. 
Then, there exists a constant $C=C(Q,p,h)>0$ such that for every $d_c$-closed $h$-form $\alpha\in L^p(\G,E_0^h)$, there exists $\phi\in BL^{s_h,p}(\G,E_0^{h-1})$ satisfying $d_c\phi=\alpha$ and 
$$\|\phi\|_{BL^{s_h,p}(\G,E_0^{h-1})}\leq C\|\alpha\|_{L^p(\G,E_0^h)}.$$

Moreover, 
\begin{equation}\label{poinC}
\|\phi\|_{L^{q_h}(\G,E_0^{h-1})}\leq C\|\alpha\|_{L^p(\G,E_0^h)}.
\end{equation}
\end{teo}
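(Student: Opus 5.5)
The plan is to read the Poincaré inequality off the $L^p$-Hodge decomposition of Theorem \ref{Lphodge}, and then convert the $BL^{s_h,p}$ bound into an $L^{q_h}$ bound through the Sobolev embedding of Theorem \ref{car_spazio}.

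Fix a $d_c$-closed $\alpha\in L^p(\G,E_0^h)$. By Theorem \ref{Lphodge} we can write $\alpha=d_c\phi+\delta_c\zeta$, with $\phi\in BL^{s_h,p}(\G,E_0^{h-1})\cap\ker\delta_c$ and $\zeta\in BL^{r_h,p}(\G,E_0^{h+1})\cap\ker d_c$, together with the estimate \eqref{estimateBL}. It then suffices to show that $\delta_c\zeta=0$.

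\textbf{Killing $\delta_c\zeta$ by duality.} Set $q=p'$. Take any test form $\varphi\in\mathcal D(\G,E_0^h)$ and decompose it with Proposition \ref{HodgeCinf} as $\varphi=d_c\varphi'+\delta_c\varphi''$; this decomposition is valid in every $L^{q}$. By Remark \ref{coroLphodge}(1),
$$\Scal{\delta_c\zeta}{\varphi}=\Scal{\alpha}{\delta_c\varphi''}.$$
We now use that $\alpha$ is closed, i.e.\ $d_c\alpha=0$ in $\mathcal D'$. Since $\varphi''$ is smooth but not compactly supported, we approximate it by $\chi_R\varphi''$, where $\chi_R$ is a cutoff equal to $1$ on $B(e,R)$, and pass to the limit. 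Closedness gives $\Scal{\alpha}{\delta_c(\chi_R\varphi'')}=0$, so
$$\Scal{\alpha}{\delta_c\varphi''}=\lim_{R\to\infty}\Scal{\alpha}{[\delta_c,\chi_R]\varphi''}\quad\text{(if the limit vanishes).}$$
The commutator is given by the Leibniz formula of Corollary \ref{leibniz}: a sum of terms $(X^{j}\chi_R)\,P(X)\varphi''$ with $|X^j\chi_R|\lesssim R^{-j}$. Each $P(X)\varphi''$ is $\varphi$ convolved with a kernel of positive type, so Lemma \ref{lemma.estimatekernelpol} gives the pointwise decay needed.

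\textbf{Main obstacle.} The delicate point is to check that the resulting error terms go to $0$ in $L^{p'}$, so that pairing with $\alpha\in L^p$ gives a vanishing limit. The homogeneities match: a term of order $j$ in $\chi_R$ paired with a derivative of type $\mu$ is of size roughly $R^{-j}\,R^{\mu-Q}$ on a shell of volume $R^Q$, and the $L^{p'}$ norm tends to zero when $p'>1$.

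If this cutoff estimate is awkward, the first alternative is a direct one. Apply $\delta_c$ to the decomposition and use that $d_c(\delta_c\zeta)=d_c\alpha=0$ while $\delta_c(\delta_c\zeta)=0$. Then $\delta_c\zeta\in L^p$ satisfies $\Delta_{R,h}\delta_c\zeta=0$ in $\mathcal D'$, because every term of $\Delta_{R,h}$ begins with $d_c$ or $\delta_c$. It is therefore tempered, and Proposition \ref{Liouville} makes it a polynomial. An $L^p$ polynomial vanishes, so $\delta_c\zeta=0$. This route is cleaner, and I would use it as the main argument.

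We conclude that $\alpha=d_c\phi$ with $\|\phi\|_{BL^{s_h,p}}\le C\|\alpha\|_{L^p}$. Finally, since $ps_h<Q$, Theorem \ref{car_spazio} (or Proposition \ref{embedding} when $s_h=1$) gives $\|\phi\|_{L^{pQ/(Q-s_hp)}}\lesssim\|\phi\|_{BL^{s_h,p}}$. The exponent $pQ/(Q-s_hp)$ is exactly $q_h$, which yields \eqref{poinC}.
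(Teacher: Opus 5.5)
Your main argument is correct and is essentially the paper's proof. You take the decomposition of Theorem \ref{Lphodge}, note that $d_c\delta_c\zeta=d_c\alpha=0$ and $\delta_c\delta_c\zeta=0$ give $\Delta_{R,h}\delta_c\zeta=0$, and conclude via Proposition \ref{Liouville} that the $L^p$ form $\delta_c\zeta$ vanishes; then \eqref{estimateBL} together with Theorem \ref{car_spazio}, where $q_h=pQ/(Q-s_hp)$, gives the two bounds. The duality/cutoff route you sketched first is not needed.
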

\begin{proof}
Let $\alpha\in L^p(\G,E_0^h)$ be a $d_c$-closed form. By  Theorem \ref{Lphodge} there exist $\phi\in BL^{s_h,p}(\G,E_0^{h-1})$ and $\zeta\in BL^{r_h,p}(\G,E_0^{h+1})$ so that
$$\alpha=d_c\phi+\delta_c\zeta.$$
Since $\alpha$ is $d_c$-closed then $d_c\delta_c\zeta=0$ and, trivially, $\delta_c(\delta_c\zeta)=0$. Thus, $\Delta_{R,h}\delta_c\zeta=0$. Since $\delta_c\zeta\in L^{p}(\G,E_0^h)$, Proposition \ref{Liouville} yields $\delta_c\zeta=0$.
Hence, $\alpha=d_c\phi$. Using  the estimate \eqref{estimateBL} together with the embedding result in Proposition \ref{car_spazio}, we conclude the proof.
\end{proof}

\begin{oss}
Using Remark \ref{h05}, we have also proved the $L^p$-Poincaré inequality for $h=5$. Indeed, if $\alpha\in L^p(\G,E_0^5)$, then $\alpha$ is automatically $d_c$-closed and we obtain the inequality \eqref{poinC} with $\phi\in BL^{1,p}(\G,E_0^4)\hookrightarrow L^q(\G,E_0^4)$ with $\frac{1}{q}=\frac{1}{p}-\frac{1}{Q}$.
\end{oss}

\subsubsection{The case $p=1$}
We deal now with the case $p=1$. Starting from a $d_c$-closed form $\alpha\in L^1$ we want to show that 
the primitives $\phi$  defined  as
\begin{equation}\label{primitive}\phi:=\left\{
\begin{array}{ll}
\delta_c(d_c\delta_c)^2\Delta_{R,h}^{-1}\alpha,\quad&\text{if $h=1,3$},\\
\delta_cd_c\delta_c\Delta_{R,2}^{-1}\alpha,\quad&\text{if $h=2$},\\
\delta_c\Delta_{R,4}^{-1}\alpha,\quad&\text{if $h=4$}.
\end{array}\right.
\end{equation}
satisfy the  estimates stated in Theorem \ref{P1} below.

Unfortunately, we cannot proceed as in the case $p>1$, since only a weak version of the $L^1$-decomposition is available.
The $L^1$-Poincaré estimate is obtained via a different approach, in the spirit of \cite{BFP3}. It relies on a recent result obtained in \cite{BT}  that in our case reads as follows.

\begin{teo}[see Theorem $5.4$ in \cite{BT}]\label{teo.GNineqCartan} Let $h=0,\ldots, 4$.
There exists $C>0$ such that for every $\phi\in \mathcal{D}(\G,E_0^h)$ with $\delta_c\phi=0$, we have
\begin{align*}
\|\phi\|_{L^{Q/(Q-1)}(\G,E_0^0)}&\leq C\|d_c \phi\|_{L^1(\G,E_0^1)},\quad\quad&\text{if $h=0$}; \\
\|\phi\|_{L^{Q/(Q-3)}(\G,E_0^1)}&\leq C\|d_c \phi\|_{L^1(\G,E_0^2)},\quad\quad&\text{if $h=1$}; \\
\|\phi\|_{L^{Q/(Q-2)}(\G,E_0^2)}&\leq C\|d_c \phi\|_{L^1(\G,E_0^3)},\quad\quad&\text{if $h=2$}; \\
\|\phi\|_{L^{Q/(Q-3)}(\G,E_0^3)}&\leq C\|d_c \phi\|_{L^1(\G,E_0^4)},\quad\quad&\text{if $h=3$}; \\
\|\phi\|_{L^{Q/(Q-1)}(\G,E_0^4)}&\leq C\|d_c \phi\|_{L^1(\G,E_0^5)},\quad\quad&\text{if $h=4$}.
\end{align*}
\end{teo}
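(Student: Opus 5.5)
The plan is to reduce each inequality, by duality, to an $L^1$-estimate for $d_c$-closed forms of Bourgain--Brezis / Lanzani--Stein / Van Schaftingen type, adapted to the Rumin complex of $\G$. The case $h=0$ needs no such machinery: the condition $\delta_c\phi=0$ is empty, $d_c\phi=\nabla_\G\phi$, and the estimate is the classical $L^1$-Sobolev (isoperimetric) inequality on Carnot groups. For $h\ge1$ we fix $\phi\in\mathcal{D}(\G,E_0^h)$ with $\delta_c\phi=0$ and use that only the $\delta_cd_c$ part of $\Delta_{R,h}$ survives. For instance, for $h=1$ we have $\Delta_{R,1}\phi=\delta_cd_c\phi$, since $(d_c\delta_c)^3\phi=0$. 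Theorem \ref{teo.inverselaplHn}, or Theorem \ref{teo.kernelCartan23} together with Lemma \ref{lemma.commDeltadcdeltac} when $h=2,3$, then yields a representation
$$\phi = \Delta_{R,h}^{-1}\,\mathcal P_h\, d_c\phi = d_c\phi\ast K_h ,$$
where $\mathcal P_h$ is the appropriate power of $\delta_c$, $d_c$ ending in $\delta_c$, so that $K_h$ is a kernel of type $s$. Here $s=1,3,2,3,1$ for $h=0,\dots,4$, which are exactly the exponents in $L^{Q/(Q-s)}$. For $h=2,3$ we must check that the polynomial ambiguity $p_h$ of Theorem \ref{teo.kernelCartan23} disappears once the derivatives $\mathcal P_h$ are applied; the degree count $\le 2$ against at least $3$ derivatives should handle this.

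Lemma \ref{lemma.stimaconvL1supp} alone only gives $\phi\in L^{Q/(Q-s),\infty}$, so we improve to the strong norm by duality. We write
$$\|\phi\|_{L^{Q/(Q-s)}}=\sup\{\langle \phi|\psi\rangle : \|\psi\|_{L^{Q/s}}\le 1\},\qquad \langle\phi|\psi\rangle=\langle d_c\phi\,|\,\psi\ast \check K_h\rangle .$$
By Proposition \ref{teo.contofconvtypeop} and Proposition \ref{BLsinN}, $\Psi:=\psi\ast\check K_h$ has $\nabla_\G^{s}\Psi\in L^{Q/s}$ with norm $\lesssim\|\psi\|_{L^{Q/s}}$. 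In the critical space $BL^{s,Q/s}$, however, $\Psi$ need not be bounded, so Hölder's inequality against $d_c\phi\in L^1$ fails. The remedy is that $F:=d_c\phi$ is $d_c$-closed, and the key lemma to prove is
$$|\langle F|\Psi\rangle|\le C\,\|F\|_{L^1}\,\|\nabla_\G^{s}\Psi\|_{L^{Q/s}}\qquad\text{for } F \text{ $d_c$-closed in } E_0^{h+1}.$$
For $h=4$, $F$ is a top-degree form and carries no closedness condition. Instead we apply $\star$: the $1$-form $\star\phi$ is $d_c$-closed, so $\star\phi=d_cu$, and the claim becomes $\|\nabla_\G u\|_{L^{Q/(Q-1)}}\lesssim\|\Delta_\G u\|_{L^1}$. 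This is the known Carnot-group analogue of the Bourgain--Brezis estimate, and it also follows from the $h=0$ lemma by duality.

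The main obstacle is the key lemma, in degrees $h+1=2,3,4$, where the Rumin differential has order $3$ or $2$. We would follow the Van Schaftingen / Chanillo--Van Schaftingen strategy used in \cite{BFP3} for Heisenberg groups. First, choose a horizontal direction and smooth $\Psi$ at scale $\varepsilon$ by group convolution, splitting it as $\Psi=\Psi_\varepsilon+(\Psi-\Psi_\varepsilon)$. Second, control the rough part trivially against $\|F\|_{L^1}$ using the homogeneity of $\nabla^s_\G\Psi$. Third, for the smooth part, use $d_cF=0$ to write the relevant components of $F$ as $\delta$-type derivatives, via the explicit matrices of $d_c$ in Section \ref{ex.Cartanrumin}, and integrate by parts, gaining the missing derivatives on $\Psi_\varepsilon$. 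Finally, optimize in $\varepsilon$ slice by slice along the chosen direction.

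The delicate point is checking algebraically, from the explicit $d_c$ matrices, that closedness provides enough cancellation in every component. This works because each $E_0^h$ has a single weight, so all estimates are homogeneous. Any component where it fails would be handled by Theorem \ref{teo.avesterrorG}-type vanishing-average arguments, using that $F=d_c\phi$ is exact and so has zero mean.
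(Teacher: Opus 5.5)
Your argument breaks down at $h=4$, and this cannot be repaired. You reduce to $\|\nabla_\G u\|_{L^{Q/(Q-1)}}\lesssim\|\Delta_\G u\|_{L^1}$ for $u\in\mathcal D(\G)$. That inequality is not a Carnot analogue of Bourgain--Brezis, and it is not dual to the $h=0$ inequality. It is exactly the endpoint estimate that fails, already in $\R^n$; this is why Lanzani--Stein need $d\phi$ in the Hardy space $\mathcal H^1$ in degree $n-1$.

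Here is a counterexample. Let $\Gamma$ be the fundamental solution of $\Delta_\G$, let $\eta\in\mathcal D(\G)$ with $\eta\equiv1$ on $B(e,2)$, let $J_\varepsilon$ be a group mollifier, and set $u_\varepsilon:=\eta\,(J_\varepsilon\ast\Gamma)$. Then $\Delta_\G u_\varepsilon=J_\varepsilon+R_\varepsilon$, where $R_\varepsilon$ is supported where $\nabla_\G\eta\neq0$ and is uniformly bounded, so $\|\Delta_\G u_\varepsilon\|_{L^1}\le C$. On the other hand, $\nabla_\G u_\varepsilon=J_\varepsilon\ast\nabla_\G\Gamma$ on $B(e,1)$, and by homogeneity
\[
\int_{B(e,1)}|\nabla_\G u_\varepsilon|^{\frac{Q}{Q-1}}\,dx=\int_{B(e,1/\varepsilon)}|J_1\ast\nabla_\G\Gamma|^{\frac{Q}{Q-1}}\,dy\gtrsim\log(1/\varepsilon).
\]
This holds because $J_1\ast\nabla_\G\Gamma$ behaves like $\nabla_\G\Gamma$ at infinity, and $|\nabla_\G\Gamma|^{Q/(Q-1)}$ is homogeneous of degree $-Q$ and not identically zero.

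Now set $\phi_\varepsilon:=\star d_cu_\varepsilon\in\mathcal D(\G,E_0^4)$. It satisfies $\delta_c\phi_\varepsilon=\pm\star d_cd_cu_\varepsilon=0$, $|\phi_\varepsilon|=|\nabla_\G u_\varepsilon|$ and $|d_c\phi_\varepsilon|=|\delta_cd_cu_\varepsilon|=|\Delta_\G u_\varepsilon|$. So the $h=4$ line of the statement is false as written, and no argument can prove it.

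Your fallback does not help either. $\Delta_\G u_\varepsilon$ already has zero mean, and Theorem \ref{teo.avesterrorG} only controls large shells, whereas the obstruction here is concentration at small scales.

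The paper gives no proof of this statement; it imports it from \cite{BT}. It also never uses the case $h=4$: in Theorem \ref{teo.mainestimateGcartan} the inequality is applied only to forms of degree $h-1\le3$. The $h=4$ case should be dropped, or stated with a Hardy-space norm of $d_c\phi$.

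For $h=0,\dots,3$ your outline is the natural route and is sound in structure: representation $\phi=d_c\phi\ast K_h$ with $K_h$ of type $s$, duality, and an $L^1$ estimate for the $d_c$-closed form $F=d_c\phi$. Two corrections.

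First, for $h=2,3$ the polynomial $p_h$ is not removed by counting derivatives, since in $\Delta_{R,h}^{-1}\Delta_{R,h}\phi=\phi+p_h$ nothing differentiates it. It does vanish, because $\phi$ and $d_c\phi\ast K_h$ both tend to $0$ at infinity.

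Second, you need not redo the Van Schaftingen slicing. Write $X_3,X_4,X_5$ as commutators. Then each row of the matrices of $d_c$ on $E_0^2,E_0^3,E_0^4$ becomes a higher-order horizontal divergence $\sum_I X^If_I=0$, and the coefficients $f_I$ determine every component of $F$. The subelliptic Chanillo--Van Schaftingen estimate then gives $|\langle F|\Psi\rangle|\le C\|F\|_{L^1}\|\nabla_\G\Psi\|_{L^Q}$. Theorem \ref{car_spazio} gives $\|\nabla_\G\Psi\|_{L^Q}\lesssim\|\nabla_\G^s\Psi\|_{L^{Q/s}}$, which closes the argument.
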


 Recall that, by Theorem \ref{teo.kernelCartan23}, the convolution operator 
\begin{equation}\label{k-h}
 K_h:=\left\{
\begin{array}{ll}
\delta_c(d_c\delta_c)^2\Delta_{R,h}^{-1},\quad&\text{if $h=1,3$}, \\
\delta_cd_c\delta_c\Delta_{R,h}^{-1},\quad&\text{if $h=2$},\\
\delta_c\Delta_{R,h}^{-1},\quad&\text{if $h=4$},\\
\end{array}\right.
\end{equation}
associated with the kernel of type 
$$\mu_h=\left\{
\begin{array}{ll}1\quad&\text{if $h=1$}, \\
3\quad&\text{if $h=2,4$}, \\
2\quad&\text{if $h=3$}.
\end{array}\right.
$$

\begin{lemma}\label{lemma.ugzerodistrseseee}
Let $\alpha\in L^1(\G,E_0^h)\cap \ker\,d_c$ and let $ K_h$ be the convolution operator kdefined in \eqref{k-h}.
Then
 $$d_c K_h\alpha=\alpha\,,\quad\quad \delta_cK_h\alpha=0,$$ in distributional sense.
\end{lemma}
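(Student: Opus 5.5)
The plan is to argue by duality against test forms, moving every operator onto the test form. This avoids approximating $\alpha$ by smooth forms, which would in general not be $d_c$-closed. Write $K_h\alpha=\alpha\ast k_h$, where $k_h$ is the matrix kernel of type $\mu_h\in(0,Q)$ given by Theorem \ref{teo.inverselaplHn} or Theorem \ref{teo.kernelCartan23}. By Lemma \ref{lemma.stimaconvL1supp}, $K_h\alpha\in L^1_{loc}$, so both $d_cK_h\alpha$ and $\delta_cK_h\alpha$ make sense as distributions.

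\emph{The identity $\delta_cK_h\alpha=0$.} Fix $\psi\in\mathcal D(\G,E_0^{h-2})$. Then
$$\Scal{\delta_cK_h\alpha}{\psi}=\Scal{\alpha\ast k_h}{d_c\psi}=\Scal{\alpha}{d_c\psi\ast\checkV{k_h}},$$
by Lemma \ref{lemma.checkconvkernel}. Since $\Delta_{R,h}^{-1}=\ccheck\Delta_{R,h}^{-1}$ and $\delta_c$ is the formal transpose of $d_c$, convolution with $\checkV{k_h}$ on smooth compactly supported forms equals the transposed operator. For $h=1,3$ this is $\Delta_{R,h}^{-1}(d_c\delta_c)^2d_c$, and analogously for $h=2,4$. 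The transposed operator always ends with $d_c$. Applying it to $d_c\psi$ therefore produces $d_c^2=0$, and the pairing vanishes.

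\emph{The identity $d_cK_h\alpha=\alpha$.} Fix $\psi\in\mathcal D(\G,E_0^h)$. As above,
$$\Scal{d_cK_h\alpha}{\psi}=\Scal{\alpha}{\delta_c\psi\ast\checkV{k_h}}=\Scal{\alpha}{\Delta_{R,h}^{-1}(d_c\delta_c)^{a}\psi},$$
where $(d_c\delta_c)^a$ is exactly the part of $\Delta_{R,h}$ of the form $(d_c\delta_c)^a$ in \eqref{eq.defLaplCartanGG} ($a=3,2,3,1$ for $h=1,2,3,4$). Hence $(d_c\delta_c)^a\psi=\Delta_{R,h}\psi-(\delta_cd_c)^b\psi$, and
$$\Delta_{R,h}^{-1}(d_c\delta_c)^a\psi=\psi-\Delta_{R,h}^{-1}(\delta_cd_c)^b\psi .$$
By \eqref{eq.commdeltadDelta}, the last term equals $\delta_c\eta$, with $\eta:=d_c(\delta_cd_c)^{b-1}\Delta_{R,h}^{-1}\psi$. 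For $h=2,3$, Theorem \ref{teo.kernelCartan23} a priori produces a polynomial $p_h$ in $\Delta_{R,h}^{-1}\Delta_{R,h}\psi=\psi+p_h$. To avoid it I would never write $\Delta^{-1}_{R,h}\Delta_{R,h}$ alone. Instead I would work only with the composite operators $\Delta_{R,h}^{-1}(d_c\delta_c)^a$ and $\Delta_{R,h}^{-1}(\delta_cd_c)^b$, which are convolutions with kernels of positive type $<Q$ (the derivative kernels $K_I$ of Theorem \ref{teo.kernelCartan23}). The difference between $\psi$ and their sum is then a polynomial decaying at infinity by Lemma \ref{lemma.estimatekernelpol}, hence zero. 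The same device justifies the transposition step for $h=2,3$.

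It thus remains to show that $\Scal{\alpha}{\delta_c\eta}=0$, where $\eta$ is smooth but not compactly supported. This is the main obstacle. Take cutoffs $\chi_R=\chi(\delta_{1/R}\cdot)$, with $\chi\equiv1$ on $B(e,1)$ and $\operatorname{supp}\chi\subset B(e,2)$. Since $d_c\alpha=0$ in $\mathcal D'$, we have $\Scal{\alpha}{\delta_c(\chi_R\eta)}=0$, so
$$\Scal{\alpha}{\delta_c\eta}=\lim_{R\to\infty}\Bigl(\Scal{\alpha}{(1-\chi_R)\delta_c\eta}-\Scal{\alpha}{[\delta_c,\chi_R]\eta}\Bigr).$$
The first term tends to $0$ by dominated convergence, because $\delta_c\eta\in L^\infty$ by Lemma \ref{lemma.estimatekernelpol}. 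For the second, Corollary \ref{leibniz} writes $[\delta_c,\chi_R]\eta$ as a sum of terms $P_j(X^{m-j}\chi_R)\eta$. Each coefficient is $O(R^{-(m-j)})$ and is supported in the shell $B(e,2R)\setminus B(e,R)$. By Lemma \ref{lemma.estimatekernelpol}, $X^j\eta$ is bounded, in fact decaying, since $\eta$ is $\psi$ convolved with a kernel of type $<Q$. Hence this term is bounded by $C\int_{\{\rho\geq R\}}|\alpha|\to0$, since $\alpha\in L^1$. This gives $\Scal{d_cK_h\alpha}{\psi}=\Scal{\alpha}{\psi}$.
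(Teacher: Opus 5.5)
Your proposal is correct and is essentially the paper's argument. Both reduce the claim to $\int_\G\langle\alpha,\delta_c\eta\rangle\,dx=0$ for a smooth $\eta$ obtained by convolving a test form with a kernel of positive type (for $h=2$ the paper's $\phi\ast\ccheck K_3$ plays the role of your $\eta$), and both kill this term by the same cutoff/Leibniz argument using $d_c\alpha=0$ and $\alpha\in L^1$. The main difference is that the paper first approximates $\alpha$ by $\alpha_k\in\mathcal D(\G,E_0^h)$, so it only needs the exact identity $\Delta_{R,h}\Delta_{R,h}^{-1}\alpha_k=\alpha_k$, whereas transposing onto $\psi$ makes you use the left inverse and hence face the polynomial $p_h$ when $h=2,3$.

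One correction there: since $(d_c\delta_c)^a$ and $(\delta_cd_c)^b$ have order $M_h=12$, the composites $\Delta_{R,h}^{-1}(d_c\delta_c)^a$ and $\Delta_{R,h}^{-1}(\delta_cd_c)^b$ are kernels of type $0$, not of positive type, so Lemma \ref{lemma.estimatekernelpol} does not apply as stated. Convolutions of test forms with homogeneous type-$0$ kernels still decay like $|x|^{-Q}$ away from the support, so your decaying-polynomial argument (and the transposition step) still goes through.
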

\begin{proof}
 For any $h$ the  kernels associated to $ K_h$ are in $ L^1_{loc}$  and will be denoted again by $K_h$. 
Consider
 a sequence $(\alpha_k)_{k\in\mathbb N}$
of compactly supported smooth forms that converges to $\alpha $ in $L^1(\G, E_0^h)$. Thanks to Lemma \ref{lemma.stimaconvL1supp}, as $k\to\infty$ it holds
$$
K_h\alpha_k \to   K_h \alpha\qquad\text{in $ L^1_{\mathrm{loc}}(\G,E_0^{h-1})$.}
$$
Hence $  K_h \alpha_k \to   K_h \alpha$ in $ \mc D'(\G,E_0^{h-1})$ together with all their derivatives. In particular
\begin{equation}\label{dc* tilde K} \delta_c  K_h\alpha = \lim_{k\to\infty} \delta_c   K_h\alpha_k=0,
\end{equation}
by the very definition of $K_h$ keeping in mind that $\delta_c\circ \delta_c=0$.

We now prove that, if we set
\begin{equation*}
\widetilde P_h:=\left\{
\begin{array}{ll}
\delta_cd_c,\quad&\text{if $h=1$}, \\
(\delta_cd_c)^3,\quad&\text{if $h=2,4$},\\
(\delta_cd_c)^2,\quad&\text{if $h=3$},\\
\end{array}\right.
\end{equation*}
then  \begin{equation}\label{parte zero}
\lim_{k\to +\infty}\widetilde{P_h}\Delta_{R,h}^{-1}\alpha_k=0\quad\text{in $\mathcal{D}'(\G,E_0^{h})$}
\end{equation}
Notice that 
for compactly supported forms this is exactly the part of the component of $\Delta_{R,h}$ annihilating compactly supported $d_c$-closed forms, but here we do not know that $\alpha_k$ are $d_c$-closed.

To avoid cumbersome notations, let us prove \eqref{parte zero} just for $h=2$.
By Lemma \ref{lemma.commDeltadcdeltac}- iii) and by Lemma \ref{lemma.checkconvkernel}, for all $\phi\in \mc D(\G,E_0^2)$, 
\begin{gather}\label{dai2}
\begin{split}
\lim_{k\to\infty} \Scal{\widetilde P_2 \Delta_{R,2}^{-1}\alpha_k}{\phi}&=\lim_{k\to\infty} \Scal{(\delta_cd_c)^3 \Delta_{R,2}^{-1}\alpha_k}{\phi}  = \lim_{k\to\infty} \Scal{ (\delta_cd_c)^2\delta_c\Delta_{R,3}^{-1}d_c\alpha_k}{ \phi}\\
&= \lim_{k\to\infty} \Scal{ d_c\alpha_k}{ \phi\ast \checkV{K_3}} = \lim_{k\to\infty}  \Scal{ \alpha_k}{ \delta_c( \phi\ast \checkV{K_3})}.
\end{split}
\end{gather}
On the other hand, since $K_3$ is a kernel of type 2, $\phi\in\mathcal{D}$ and $\delta_c$ is a homogeneous differential operator of order $2$, then, by equation \ref{eq.convspostcose}, $\delta_c(\phi\ast\ccheck K_3)=\ccheck\delta_c \ccheck\phi\ast\ccheck K_3$. Moreover, by Lemma \ref{lemma.estimatekernelpol}, we have $\ccheck\delta_c \ccheck\phi\ast\ccheck K_3\in L^\infty$.
Hence, we have proved
$$\lim_{k\to\infty} \Scal{\widetilde P_2 \Delta_{R,2}^{-1}\alpha_k}{\phi}=\lim_{k\to\infty}  \Scal{ \alpha_k}{ \delta_c( \phi\ast \checkV{K_3})}=\Scal{ \alpha}{  \delta_c( \phi\ast \ccheck  K_3)}.$$
Let us show that
\begin{equation}\label{closed eq:1}
\int_\G \langle \alpha,  \delta_c( \phi\ast \ccheck  K_3)\rangle\, dx=0.
\end{equation}
If $k\in \mathbb N$, let $(\sigma_k)_{k\in\N}\subseteq \mathcal{D}(\G)$ be a sequence cut-off functions supported
in $B(e,2k)$ and identically 1 on $B(e,k)$ such that $k |\nabla_\G\sigma_k|+k^2|\nabla_\G^2\sigma_k|+k^3|\nabla_\G^3\sigma_k|\leq c$, for every $k$.
By Lemma \ref{leibniz} and the dominated convergence theorem, keeping in mind  $d_c\alpha=0$, 
\begin{align*}
\int_\G \langle \alpha,  \delta_c( \phi\ast \ccheck  & K_3)\rangle\, dx  = \lim_{k\to\infty}\int_\G \scal{\alpha}{ \sigma_k \delta_c( \phi\ast \ccheck  K_3)}\, dx\\
&=\lim_{k\to\infty} \Big(\int_\G \scal{\alpha}{\delta_c(\sigma_k( \phi\ast \ccheck  K_3))}\, dx
- \int_\G \scal{\alpha}{ [\delta_c, \sigma_k](\phi\ast \ccheck  K_3)}\, dx\Big)\\
&
=\lim_{k\to\infty} \Big(\Scal{d_c\alpha}{ \sigma_k(\phi\ast \ccheck  K_3)}
-  \int_\G \scal{\alpha}{ [\delta_c, \sigma_k](\phi\ast \ccheck  K_3)}\, dx\Big)
\\&
=
-\lim_{k\to\infty} \Big(\int_\G \scal{\alpha}{ [\delta_c, \sigma_k](\phi\ast \ccheck  K_3)}\, dx\Big)=0
\end{align*}
since the horizontal derivatives of any order of $\sigma_k$ vanish as
$k\to\infty$ and $\phi \ast \ccheck K_3\in L^\infty$ by Lemma \ref{lemma.estimatekernelpol}. Hence \eqref{closed eq:1} is proved and the proof is completed.
The proof for $h\ne 2$ works analogously and is omitted.
\end{proof}

We are now able to prove the main estimates of this section.
We  begin with  a proposition contained in \cite{pansu-tripaldi} related to vanishing averages of a differential form. Recall that assuming that averages vanish allows a finer control of the effect of kernels. If $P$ is the operator of convolution with a kernel of type $\mu>0$, and $\omega\in L^1$, then the $L^1$ norm of $P\omega$ on shells $B(e,2R)\setminus B(e,R)$ is $O(R^\mu)$. If furthermore $\omega$ has vanishing average, this can be improved to $o(R^\mu)$, as follows from Theorem \ref{teo.avesterrorG}. As proved by Pansu and Tripaldi,  if a differential form $\omega\in L^1$ is closed, then all its averages vanish in the following sense. 
\begin{proposition}[see Proposition 3.3 in \cite{pansu-tripaldi}]\label{media nulla} Let $\omega\in L^1(\G, E_0^h)$ be a $d_c$-closed form and $\beta$ be
a left-invariant form of
complementary degree $N-h$
then
$$
\int_\G \omega\wedge\beta=0\,.
$$
\end{proposition}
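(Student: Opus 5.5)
Since $\beta$ is left-invariant, its components with respect to the left-invariant basis are constants. The integrand $\omega\wedge\beta$ is therefore a finite linear combination, with constant coefficients, of the components $\omega_j$ of $\omega$ times the volume form. Hence $\omega\wedge\beta\in L^1$, the integral is well defined, and $|\int\omega\wedge\beta|\le C\|\omega\|_{L^1}$. The plan is to show that $\beta$ pairs with $\omega$ like a $d_c$-exact object, i.e. like $d_c$ applied to a form of linear-type growth, and then to transfer $d_c$ onto $\omega$ using closedness and cut-offs, in the spirit of the proof of \eqref{closed eq:1}.

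\textbf{Step 1: projection of $\beta$.} We may replace $\beta$ by its projection onto $\star E_0^h=E_0^{N-h}$, since only the $E_0$-components of $\beta$ pair with $\omega\in E_0^h$. Write $\int_\G\omega\wedge\beta=\pm\Scal{\omega}{\star\beta}$ with $\star\beta\in E_0^h$ left-invariant, so it suffices to prove $\Scal{\omega}{\gamma}=0$ for every constant-coefficient $\gamma\in E_0^h$.

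\textbf{Step 2: cut-off argument.} Take the cut-offs $\sigma_k$ from the proof of Lemma \ref{lemma.ugzerodistrseseee} (supported in $B(e,2k)$, equal to $1$ on $B(e,k)$, with $|\nabla^j_\G\sigma_k|\le ck^{-j}$). By dominated convergence, $\Scal{\omega}{\gamma}=\lim_k\Scal{\omega}{\sigma_k\gamma}$. The key idea is to represent $\sigma_k\gamma$, up to small errors, as $\delta_c$ of something.

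The natural route, closer in spirit to Pansu--Tripaldi, is to mollify. Set $\omega_\e=\omega\ast\chi_\e$ with $\chi_\e$ an approximate identity. Since $d_c$ is left-invariant it commutes with right convolution, so $\omega_\e$ is still $d_c$-closed, it is smooth with $\omega_\e\in L^1\cap L^\infty$, and $\int\omega_\e\wedge\beta\to\int\omega\wedge\beta$. We may therefore assume $\omega$ smooth. Next we use the fact (Theorem \ref{teo.splitdeRhamanddC}) that the Rumin complex is homotopy equivalent to de Rham via $\Pi_E$. The form $\Pi_E\omega$ is $d$-closed, and $\int\omega\wedge\beta$ equals $\int\Pi_E\omega\wedge\tilde\beta$ for a suitable $d$-closed left-invariant $\tilde\beta$. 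Here we use that $\Pi_E$ is a differential operator, so $\Pi_E\omega$ lies in $L^1$ only if the derivatives of $\omega$ do; we handle this by first mollifying, since $X^J\omega_\e=\omega\ast X^J\chi_\e\in L^1$. The problem then reduces to the de Rham statement: a $d$-closed smooth $L^1$ form $\eta$ (with all derivatives in $L^1$) paired with a closed left-invariant form $\tilde\beta$ gives zero. For this we write $\eta\wedge\tilde\beta=d(\sigma_k\cdots)$-type identities. Since $\eta\wedge\tilde\beta$ is a closed top form, we have $\int\sigma_k\,\eta\wedge\tilde\beta=-\int d\sigma_k\wedge\eta\wedge\tilde\beta'$ whenever $\tilde\beta=d\tilde\beta'$, and $\tilde\beta'$ can be chosen polynomial of controlled homogeneity. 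Exactness of left-invariant forms on $\R^N$ holds with polynomial primitives.

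\textbf{Main obstacle.} The hard part is the growth balance. In the term $d\sigma_k\wedge\eta\wedge\tilde\beta'$, the primitive $\tilde\beta'$ grows like $\rho^{w}$, while $d\sigma_k$ decays like $k^{-w'}$ on the shell (components of $d\sigma_k$ in layer $j$ are of size $k^{-j}$). The weights must match so that the product is bounded on the shell, after which the term tends to $0$ by dominated convergence because $\eta\in L^1$ and the shell escapes to infinity. I would first try choosing the primitive componentwise by weight, using the weight-preservation of $d$ on left-invariant forms (\cite{BFTT}) and the homogeneous Poincaré homotopy $\tilde\beta'=\int_0^1\delta_t^*(\iota_{E}\tilde\beta)\,dt/t$ with $E$ the dilation generator. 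This produces primitives whose coefficients are homogeneous of degree exactly compensating $d\sigma_k$.
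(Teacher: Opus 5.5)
The paper does not prove this statement; it is quoted from Pansu--Tripaldi, so your argument has to stand on its own. Your Step 1 is fine. The de Rham route, however, breaks exactly at what you call the main obstacle: a homogeneous primitive does \emph{not} compensate $d\sigma_k$. Write $Z$ for the generator of $\delta_\lambda$ (your $E$), to avoid a clash with Rumin's $E$. On the Cartan group $\omega$ has a single weight $p$, the relevant $\beta\in E_0^{N-h}$ has weight $Q-p$, and $\alpha=\iota_Z\beta/(Q-p)=\sum_I a_I\theta_I$ has $a_I$ homogeneous of degree $Q-p-w_I$.

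The trouble is that $\eta=\Pi_E\omega$ is not of pure weight. It has components of weight $p+\ell$, $\ell\ge1$, and these are order-$\ell$ derivatives of $\omega$. In $\int_\G d\sigma_k\wedge\eta\wedge\alpha$ such a component is multiplied by $(X_j\sigma_k)a_I$ with $d_j+w_I=Q-p-\ell$. That coefficient has size $k^{-d_j}k^{Q-p-w_I}=k^{\ell}$ on $B(e,2k)\setminus B(e,k)$.

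A concrete case: take $h=1$ and $\beta=\theta_2\wedge\theta_3\wedge\theta_4\wedge\theta_5$. Then $\alpha$ contains $-\frac19\,\theta_3(Z)\,\theta_2\wedge\theta_4\wedge\theta_5$, where $\theta_3(Z)=2x_3-x_1x_2$ in the coordinates of Section \ref{ex.Cartanesempio}. Through $(X_1\sigma_k)\theta_1$ this term meets the $\theta_3$-component $\pm(X_1\omega_2-X_2\omega_1)$ of $\Pi_E\omega$ with a coefficient of size $k$; the $\theta_4,\theta_5$-components get $k^2$. You are left with terms like $k^{\ell}\int_{B(e,2k)\setminus B(e,k)}|\eta_{p+\ell}|$. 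These need not tend to $0$ even if $\eta\in L^1$, so dominated convergence does not apply.

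The mollification is also misplaced. Since $L(f\ast g)=f\ast Lg$ for left-invariant $L$, $\omega\ast\chi_\e$ is in general not $d_c$-closed. The other choice, $\chi_\e\ast\omega$, is closed, but its left-invariant derivatives need not lie in $L^1$, because left-invariant fields are right-invariant ones with polynomial coefficients.

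The missing idea is to stay in the Rumin complex, as you announce at the start of Step 2 and as the paper does for \eqref{closed eq:1}.
\begin{itemize}
\item Since $d\beta=d_0\beta=0$, we have $\beta\in E^{N-h}$.
\item Set $\gamma:=\Pi_{E_0}\Pi_E\alpha\in E_0^{N-h-1}$. Using $d\Pi_E=\Pi_E d$ and $\Pi_E\Pi_{E_0}\Pi_E=\Pi_E$, it satisfies $d_c\gamma=\Pi_{E_0}\Pi_E d\alpha=\beta$.
\item Because $E_0^{N-h-1}$ has a single weight, the coefficients of $\gamma$ are homogeneous polynomials of degree exactly the order $m$ of $d_c$ on $E_0^{N-h-1}$. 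This is where the single-weight structure of the Cartan group enters.
\end{itemize}
Then
\[
\int_\G\sigma_k\,\omega\wedge\beta=\int_\G\omega\wedge d_c(\sigma_k\gamma)-\int_\G\omega\wedge[d_c,\sigma_k]\gamma .
\]
The first term is, up to sign, $d_c\omega=0$ tested on the compactly supported form $\sigma_k\gamma$. By Corollary \ref{leibniz}, the second is a sum of terms $P_j(X^{m-j}\sigma_k)\gamma=O(k^{-(m-j)})\,O(k^{m-j})$ supported in the shell. It is therefore bounded by $C\int_{B(e,2k)\setminus B(e,k)}|\omega|\to0$.

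No mollification is needed. In your de Rham picture, this amounts to integrating the derivatives hidden in the higher-weight components of $\Pi_E\omega$ back onto $d\sigma_k\wedge\alpha$.

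Finally, the statement needs $h<N$: for $h=N$ it fails with $\beta$ constant. In this argument, $h<N$ is exactly what makes $\beta$ $d_c$-exact.
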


Gathering all the above results and following the path of the proof of Theorem $5.2$  in \cite{BFP2}, we obtain the following result.

\begin{teo}\label{teo.mainestimateGcartan}
For every $d_c$-closed form $\alpha\in L^1(\G,E_0^h)$, with $h=1,\ldots,4$ we have
\begin{align*}
\|\delta_c(d_c\delta_c)^2\Delta_{R,1}^{-1}\alpha\|_{L^{Q/(Q-1)}(\G,E_0^0)}&\leq C\|\alpha\|_{L^1(\G,E_0^1)},\quad\quad&\text{if $h=1$}; \\
\|\delta_cd_c\delta_c\Delta_{R,2}^{-1}\alpha\|_{L^{Q/(Q-3)}(\G,E_0^1)}&\leq C\|\alpha\|_{L^1(\G,E_0^2)},\quad\quad&\text{if $h=2$}; \\
\|\delta_c(d_c\delta_c)^2\Delta_{R,3}^{-1}\alpha\|_{L^{Q/(Q-2)}(\G,E_0^2)}&\leq C\|\alpha\|_{L^1(\G,E_0^3)},\quad\quad&\text{if $h=3$}; \\
\|\delta_c\Delta_{R,4}^{-1}\alpha\|_{L^{Q/(Q-3)}(\G,E_0^3)}&\leq C\|\alpha\|_{L^1(\G,E_0^4)},\quad\quad&\text{if $h=4$}.
\end{align*}
\end{teo}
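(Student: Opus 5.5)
We follow the strategy of Theorem 5.2 in \cite{BFP2}: a duality argument in which the Gagliardo--Nirenberg inequality of Theorem \ref{teo.GNineqCartan} is applied to a cut-off of the primitive, and the boundary errors are controlled on dyadic shells with Theorem \ref{teo.avesterrorG}. Set $\phi:=K_h\alpha$, with $K_h$ as in \eqref{k-h}, a kernel of type $\mu_h$ ($\mu_1=1$, $\mu_3=2$, $\mu_2=\mu_4=3$). Note that the target exponent is $Q/(Q-\mu_h)$. Lemma \ref{lemma.stimaconvL1supp} gives $\phi\in L^{Q/(Q-\mu_h),\infty}\subset L^1_{loc}$, and Lemma \ref{lemma.ugzerodistrseseee} gives $d_c\phi=\alpha$ and $\delta_c\phi=0$ in $\mathcal D'$. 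We must upgrade the weak-type bound to a strong one.

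\textbf{Step 1: regularize and cut off.} Mollify by convolution with a smooth approximate identity $\rho_\epsilon$. This commutes with left-invariant operators, so $\phi_\epsilon:=\phi\ast\rho_\epsilon$ is smooth, $\delta_c$-closed, satisfies $d_c\phi_\epsilon=\alpha_\epsilon$ with $\|\alpha_\epsilon\|_{L^1}\le\|\alpha\|_{L^1}$, and $\phi_\epsilon=\alpha_\epsilon\ast K_h$. Take the cut-offs $\sigma_R$ (equal to $1$ on $B(e,R)$, supported in $B(e,2R)$, with $|\nabla^j_\G\sigma_R|\lesssim R^{-j}$). Then $\sigma_R\phi_\epsilon$ is compactly supported, but it is not $\delta_c$-closed, while Theorem \ref{teo.GNineqCartan} requires $\delta_c$-closedness.

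\textbf{Step 2: restore $\delta_c$-closedness.} For $u:=\sigma_R\phi_\epsilon\in\mathcal D(\G,E_0^{h-1})$, apply the smooth Hodge decomposition of Proposition \ref{HodgeCinf} in degree $h-1$ (or directly $u=\Delta_{R,h-1}\Delta_{R,h-1}^{-1}u$). This splits $u=d_c\psi+\tilde u$ with $\delta_c\tilde u=0$ and $d_c\tilde u=d_cu$. A compactly supported version is needed for Theorem \ref{teo.GNineqCartan}; it extends by density to $\tilde u$, which decays like a kernel convolution. Since $\delta_c u=[\delta_c,\sigma_R]\phi_\epsilon$, the difference $u-\tilde u$ is a kernel of type $\ge0$ applied to this commutator. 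Then
\[
\|\sigma_R\phi_\epsilon\|_{L^{Q/(Q-\mu_h)}}\le \|\tilde u\|+\|u-\tilde u\|\lesssim \|d_c(\sigma_R\phi_\epsilon)\|_{L^1}+\text{(commutator term)} .
\]
Also $d_c(\sigma_R\phi_\epsilon)=\sigma_R\alpha_\epsilon+[d_c,\sigma_R]\phi_\epsilon$.

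\textbf{Step 3: the commutator terms (main obstacle).} By Corollary \ref{leibniz}, $[d_c,\sigma_R]\phi_\epsilon=\sum_j P_j(X^{m-j}\sigma_R)\phi_\epsilon$, where $m$ is the order of $d_c$. Each term is a derivative of order $j$ of $\phi_\epsilon$, supported in the shell $B(e,2R)\setminus B(e,R)$, with coefficient $O(R^{-(m-j)})$. Now $X^J\phi_\epsilon=\alpha_\epsilon\ast X^JK_h$, where $X^JK_h$ has type $\mu_h-j$. When that type is positive, Theorem \ref{teo.avesterrorG} applies, because the components of $\alpha_\epsilon$ have zero average by Proposition \ref{media nulla}, which holds since $\alpha_\epsilon$ is $d_c$-closed. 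It gives $\int_{\text{shell}}|X^J\phi_\epsilon|=o(R^{\mu_h-j})$. The degrees must match, i.e. $m-j$ must balance $\mu_h-j$ up to the exponent loss from $L^1$; this is exactly how the types $\mu_h$ were designed. Terms where $\mu_h-j\le0$ (type $0$ or negative) are bounded through $L^1$-boundedness on shells via the weak $L^1$ estimate combined with the extra decay $R^{-(m-j)}$. We expect this bookkeeping, degree by degree (especially $h=2,3$, where orders $2$ and $3$ mix), to be the delicate step. The same shell estimate treats the $\delta_c$-commutator term from Step 2.

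\textbf{Step 4: conclude.} Letting $R\to\infty$ gives $\|\phi_\epsilon\|_{L^{Q/(Q-\mu_h)}}\le C\|\alpha\|_{L^1}$ by Fatou's lemma. Letting $\epsilon\to0$, with $\phi_\epsilon\to\phi$ in $L^1_{loc}$, and using lower semicontinuity yields the claimed inequality.
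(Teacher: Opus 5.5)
Your proposal has a genuine gap in Step~2, and that step is where the whole difficulty lies.

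\textbf{The circular density step.} Theorem~\ref{teo.GNineqCartan} applies only to compactly supported $\delta_c$-closed forms. Your $\tilde u$, the coexact part of $u=\sigma_R\phi_\epsilon$, is not compactly supported. In fact, $\tilde u$ and $K_h(d_cu)$ are both $\delta_c$-closed and have the same $d_c$, so the Liouville argument gives $\tilde u=K_h(d_cu)$: it is the canonical primitive of the compactly supported closed form $d_cu$. The bound $\|\tilde u\|_{L^{Q/(Q-\mu_h)}}\lesssim\|d_cu\|_{L^1}$ that you obtain ``by density'' is therefore the theorem itself for smooth compactly supported data. The density argument would require approximating a $\delta_c$-closed form by compactly supported $\delta_c$-closed ones, which is exactly what multiplying by a cut-off fails to do.

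\textbf{The bound on $u-\tilde u$.} This is not ``the same shell estimate''. Theorem~\ref{teo.avesterrorG} controls $L^1$ norms on shells. You need the $L^{Q/(Q-\mu_h)}$ norm of a positive-type kernel applied to $[\delta_c,\sigma_R]\phi_\epsilon$, and $L^1$ data give only weak-type bounds. For $h=2,4$ this can be rescued: use H\"older on the shell starting from the weak-type bound on $\phi_\epsilon$, then Hardy--Littlewood--Sobolev. For $h=3$ it cannot. There $u-\tilde u=d_c\Delta_{R,1}^{-1}\delta_cu$ is a type-$3$ kernel with target $L^{Q/(Q-2)}$. Hardy--Littlewood--Sobolev would need the input exponent $Q/(Q+1)<1$. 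The piece coming from $(X^3\sigma_R)\phi_\epsilon$ generally decays only like $|x|^{3-Q}$, which is not in $L^{Q/(Q-2)}$ near infinity; only the full sum has the needed cancellation. For $h=1$ Step~2 is vacuous, since $0$-forms are automatically $\delta_c$-closed, and your argument works.

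\textbf{The missing idea: put the cut-off \emph{inside} $\delta_c$.} Write $\phi=\delta_cP_h\Delta_{R,h}^{-1}\alpha$, with $P_h=(d_c\delta_c)^2,\,d_c\delta_c,\,(d_c\delta_c)^2,\,1$ for $h=1,2,3,4$. The paper cuts off the potential and sets $v_{\e,k}=J_\e\ast\delta_c(\chi_kP_h\Delta_{R,h}^{-1}\alpha)$.
\begin{itemize}
\item This form is smooth, compactly supported and automatically $\delta_c$-closed because $\delta_c^2=0$. So Theorem~\ref{teo.GNineqCartan} applies directly.
\item By Lemma~\ref{lemma.ugzerodistrseseee}, $d_cv_{\e,k}=J_\e\ast\bigl(\chi_k\alpha+[d_c\delta_c,\chi_k]P_h\Delta_{R,h}^{-1}\alpha\bigr)$.
\item The type of $P_h\Delta_{R,h}^{-1}$ equals the order of $d_c\delta_c$ on $E_0^h$. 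Hence each commutator term is $k^{-\mu}$ times a kernel of type $\mu\ge1$ applied to $\alpha$ on the shell, which is $o(1)$ by Theorem~\ref{teo.avesterrorG}.
\item The remaining discrepancy $[\delta_c,\chi_k]P_h\Delta_{R,h}^{-1}\alpha$ between $\delta_c(\chi_kP_h\Delta_{R,h}^{-1}\alpha)$ and $\chi_k\phi$ never needs a norm bound. It only has to vanish pointwise as $k\to\infty$, and Fatou's lemma concludes.
\end{itemize}

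Your Step~3 is correct in itself: the order of $d_c$ on $E_0^{h-1}$ equals $\mu_h$, so the case $\mu_h-j\le0$ never occurs. But there it is applied to a form that is not $\delta_c$-closed.

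A minor point: mollify on the left, $\rho_\epsilon\ast\phi$. Since $L(f\ast g)=f\ast Lg$ for left-invariant $L$, right convolution does not commute with $d_c$ and $\delta_c$.
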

\begin{proof} Let $\alpha\in L^1(\G,E_0^h)\cap\ker\,d_c$. By Proposition \ref{media nulla}, $\alpha$ has zero average.
The convolution operator $ \delta_c(d_c\delta_c)^2\Delta_{R,1}^{-1}$ is associated to a kernel of type $1$ while $\delta_cd_c\delta_c\Delta_{R,2}^{-1}$ and $\delta_c\Delta_{R,4}^{-1}$ are associated to kernels of type $3$; finally, $\delta_c(d_c\delta_c)^2\Delta_{R,3}^{-1}$ is associated to a kernel of type $2$.

 Let $k\in\N$ and let $\chi_k$ be the cut-off function supported in $B(e,2k)$ such that $\chi_k\equiv 1$ in $B(e,k)$. For every $0<\e<1$, denote as $J_\e$ the usual Friedrichs' mollifier (with respect to the group structure). Hence,  define in $E_0^{h-1}$
\begin{align*}
v_{\e,k}&:=J_\e\ast\delta_c(\chi_k(d_c\delta_c)^2\Delta_{R,1}^{-1}\alpha),&\text{if $h=1$}; \\
v_{\e,k}&:=J_\e\ast\delta_c(\chi_k d_c\delta_c\Delta_{R,2}^{-1}\alpha),&\text{if $h=2$}; \\
v_{\e,k}&:=J_\e\ast\delta_c(\chi_k(d_c\delta_c)^2\Delta_{R,3}^{-1}\alpha),&\text{if $h=3$}; \\
v_{\e,k}&:=J_\e\ast\delta_c(\chi_k\Delta_{R,4}^{-1}\alpha),&\text{if $h=4$}.
\end{align*}
Let us denote by
\begin{equation*}
P_h:=\left\{
\begin{array}{ll}
(d_c\delta_c)^2,\quad&\text{if $h=1,3$}, \\
d_c\delta_c,\quad&\text{if $h=2$},\\
1,\quad&\text{if $h=4$}.\\
\end{array}\right.
\end{equation*}
Thus, we can write for any degree $h$,
$$v_{\e,k}=J_\e\ast\delta_c(\chi_k P_h\Delta_{R,h}^{-1}\alpha).$$
We observe at first that $\delta_c v_{\e,k}=J_\e\ast \delta_c^2(\chi_k P_h\Delta_{R,h}^{-1}\alpha)=0$, implying that $v_{\e,k}$ is $\delta_c$-closed. Moreover, $\delta_c(\chi_k P_h\Delta_{R,h}^{-1}\alpha)$ is compactly supported and uniformly bounded in $L^1(\G,E_0^{h-1})$. Indeed, by Corollary \ref{leibniz}
$$\delta_c(\chi_k P_h\Delta_{R,h}^{-1}\alpha)=\chi_k(\delta_cP_h\Delta_{R,h}^{-1}\alpha)+[\delta_c,\chi_k] (P_h\Delta_{R,h}^{-1}\alpha),$$
where, by Lemma \ref{lemma.stimaconvL1supp}, both the terms in the right hand side have corresponding kernels of type $\geq 1$ and $\chi_k$ has compact support. In order to treat simultaneously all the cases, we set also $s_h$ as in \eqref{indici}.
Applying Theorem \ref{teo.GNineqCartan} to $v_{\e,k}\in \mathcal{D}(\G,E_0^{h-1})$, and recalling that by Lemma \ref{lemma.ugzerodistrseseee},
$$d_c\delta_c P_h\Delta_{R,h}^{-1}\alpha=d_c K_h\alpha=\alpha,$$ we have
\begin{gather}\label{eq.estveN}
\begin{split}
\|&v_{\e,k}\|_{L^{Q/(Q-s_h)}(\G,E_0^{h-1})}\leq C \|d_c v_{\e,k}\|_{L^1(\G,E_0^{h})}\\
&\quad= C\|J_\e\ast d_c\delta_c(\chi_k P_h\Delta_{R,h}^{-1}\alpha)\|_{L^1(\G,E_0^{h})}\\
&\quad\leq C\left(\|J_\e\ast[d_c\delta_c,\chi_k] (P_h\Delta_{R,h}^{-1}\alpha)\|_{L^1(\G,E_0^{h})}+\|J_\e\ast\chi_k(d_c\delta_c P_h\Delta_{R,h}^{-1}\alpha)\|_{L^1(\G,E_0^{h})}\right)\\
&\quad\leq C\left(\|[d_c\delta_c,\chi_k] (P_h\Delta_{R,h}^{-1}\alpha)\|_{L^1(\G,E_0^{h})}+\|\chi_k(d_c\delta_c P_h\Delta_{R,h}^{-1}\alpha)\|_{L^1(\G,E_0^{h})}\right)\\
&\quad\leq C\left(\|[d_c\delta_c,\chi_k] (P_h\Delta_{R,h}^{-1}\alpha)\|_{L^1(\G,E_0^{h})}+\|\chi_k\alpha\|_{L^1(\G,E_0^{h})}\right),
\end{split}
\end{gather}

We are left to prove that the term $\|[d_c\delta_c,\chi_k] (P_h\Delta_{R,h}^{-1}\alpha)\|_{L^1(\G,E_0^{h})}=o(1)$ as $k\to +\infty$.

By Corollary \ref{leibniz}, we have that  $[d_c \delta_c, \chi_k]$ can be written as a sum of terms of the form $P_j^h(X^s)$ with $j=0,\cdots,5$, depending on $h$. 
By Proposition \ref{prop.kernelopcheck},  the norm  $\| [d_c \delta_c, \chi_k] ( P_h\Delta_{R,h}^{-1} \alpha)\|_{L^1(\G,E_0^h)}$ can be estimated
by a sum of terms of the form
$$
\frac{1}{k^\mu} \int_{B(e,2k)\setminus B(e,k)} \big|K\alpha \big| \, dx,
$$
where $K$ is a kernel of type $\mu\ge 1$. Thus, we can apply Theorem \ref{teo.avesterrorG} to conclude that
$$
\| [d_c \delta_c, \chi_k] ( P_h\Delta_{R,h}^{-1} \alpha)\|_{L^1(\G,E_0^h)}\longrightarrow 0\qquad \text{as $k\to \infty$.}
$$
If $\eps\to 0$, then $v_{\eps,k} \to \delta_c  (\chi_k P_h \Delta_{R,h}^{-1} \alpha)$ in $L^1(\G, E_0^{h-1})$ and hence, up to a subsequence, we may assume a.e. . Fatou theorem applied to \eqref{eq.estveN} provides an $L^{Q/(Q-s_h)}$ bound on $\delta_c(\chi_k P_h \Delta_{R,h}^{-1} \alpha)$. The term
$$
\delta_c(\chi_k P_h\Delta_{R,h}^{-1} \alpha) =  \chi_k\delta_c P_h \Delta_{R,h}^{-1} \alpha + [\delta_c, \chi_k] P_h\Delta_{R,h}^{-1} \alpha,
$$
converges a.e.\,to $\delta_c P_h \Delta_{R,h}^{-1} \alpha$. Again by Fatou theorem, 
$$
\|\delta_c P_h \Delta_{R,h}^{-1} \alpha\|_{L^{Q/(Q-s_h)}(\G,E_0^{h-1})}\le C \|\alpha\|_{L^{1}(\G,E_0^h)}.
$$
This completes the proof.
\end{proof}

\medskip
\begin{teo}\label{P1} 
Let $h=1,\ldots,4$ and set $q_h:=Q/(Q-s_h)$, where $s_h$ is defined in \eqref{indici}.
Then, there exists a constant $C=C(Q,h)>0$ such that, for every $d_c$-closed $h$-form $\alpha\in L^1(\G,E_0^h)$, there exists an $(h-1)$-form $\phi\in L^{q_h}(\G,E_0^{h-1})$ such that
$$d_c\phi=\alpha\quad\quad\text{and}\quad\quad\|\phi\|_{L^{q_h}(\G,E_0^{h-1})}\leq C\|\alpha\|_{L^1(\G,E_0^h)}.$$
\end{teo}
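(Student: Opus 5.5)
The plan is to take as primitive the form $\phi:=K_h\alpha$ defined in \eqref{primitive}, i.e. $\phi=\delta_cP_h\Delta_{R,h}^{-1}\alpha$ with $P_h$ as in the proof of Theorem \ref{teo.mainestimateGcartan}, and then verify the two required properties separately. Here $\alpha\in L^1(\G,E_0^h)$ is the given $d_c$-closed form.

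\emph{The primitive property.} The identity $d_c\phi=\alpha$ in the sense of distributions is exactly the content of Lemma \ref{lemma.ugzerodistrseseee}: since $\alpha$ is $d_c$-closed and in $L^1$, we have $d_cK_h\alpha=\alpha$ and $\delta_cK_h\alpha=0$ in $\mathcal{D}'$. Before invoking it, I will check that $\phi$ is a well-defined $L^1_{loc}$ form. This holds because $K_h$ is a kernel of type $\mu_h\in(0,Q)$, so Lemma \ref{lemma.stimaconvL1supp} gives $\phi\in L^{Q/(Q-\mu_h),\infty}\subset L^1_{loc}$.

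\emph{The norm estimate.} The bound $\|\phi\|_{L^{q_h}}\le C\|\alpha\|_{L^1}$ with $q_h=Q/(Q-s_h)$ is precisely Theorem \ref{teo.mainestimateGcartan}, read case by case for $h=1,\dots,4$. There is one bookkeeping point to settle: the exponents in Theorem \ref{teo.mainestimateGcartan} must match $s_h$ from \eqref{indici}. The relevant values are $s_1=1$, $s_2=3$, $s_3=2$, $s_4=3$, and these coincide with the kernel types $\mu_h$ and with the exponents appearing in Theorem \ref{teo.mainestimateGcartan}. The constant depends only on $Q$ and $h$ through Theorem \ref{teo.GNineqCartan} and the kernel bounds.

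\emph{Main obstacle.} Once the earlier results are in place, the genuine difficulty is already absorbed in Theorem \ref{teo.mainestimateGcartan}, namely the vanishing of the commutator terms on shells. That vanishing uses the zero-average property from Proposition \ref{media nulla} together with Theorem \ref{teo.avesterrorG}. What remains here is a consistency check: the object bounded in Theorem \ref{teo.mainestimateGcartan} is the same distribution $K_h\alpha$ to which Lemma \ref{lemma.ugzerodistrseseee} applies. Both are defined by convolution of $\alpha$ with the same $L^1_{loc}$ kernel, and the $L^1$ approximation argument in either proof identifies them as distributions.

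Therefore $\phi$ belongs to $L^{q_h}$, satisfies $d_c\phi=\alpha$, and obeys the stated bound. As a by-product, $\phi$ is $\delta_c$-closed.
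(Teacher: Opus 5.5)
Your proposal is correct and follows the paper's proof exactly. The paper also takes the primitive $\phi=K_h\alpha$ from \eqref{primitive}, obtains $d_c\phi=\alpha$ from Lemma \ref{lemma.ugzerodistrseseee} and the $L^{q_h}$ bound from Theorem \ref{teo.mainestimateGcartan}; your extra bookkeeping (matching $s_h$ with the kernel types $\mu_h$, and checking $\phi\in L^1_{loc}$ via Lemma \ref{lemma.stimaconvL1supp}) is accurate and only makes explicit what the paper leaves implicit.
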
 
\begin{proof}
Combining together Theorem \ref{teo.mainestimateGcartan} and Lemma \ref{lemma.ugzerodistrseseee}, we obtain the result, where the primitives $\phi$ are defined as in \eqref{primitive}, depending on $h$.
\end{proof}

\begin{remark} In \cite{pansu-tripaldi}, Remark 2.5, it is proved that any Carnot group is $Q$-parabolic, i.e.  for every
compact set $V$, there exists a smooth compactly supported function  equals $1$ on
$V$, whose gradient has arbitrarily small $L^Q$ norm. It follows, as in the Heisenberg setting (see Section 1.2 in \cite{BFP3}),  that an  $L^1$-Poincar\'e inequality cannot hold for   $\omega\in  E_0^5$ in the Cartan group .
\end{remark}

\bigskip

\subsection{Sobolev-Gaffney type inequalities}\label{sec:sobgaffineqGCartan}

 It is well known that for a scalar function,  if $\alpha\in \mc D(\G)$ $p>1$ and $s\ge 0$ then there exists a constant such that
$$\|\alpha\|_{W^{s+1,p}(\G)}\leq C \Big(\|d_c\alpha\|_{W^{s,p}(\G,E_0^{1})}+\|\alpha\|_{L^p(\G)}\Big),$$ (furthermore, the converse estimate also holds).
By Hodge duality, an analogous result follows for a smooth compactly supported $5$-form  $\alpha$, as
$$\|\alpha\|_{W^{s+1,p}(\G,E_0^5)}\leq C\Big(\|\delta_c\alpha\|_{W^{s,p}(\G,E_0^{4})}+\|\alpha\|_{L^p(\G,E_0^5)}\Big).$$

We conclude this section by a remark akin to Remark $5.3$ in \cite{BF7}, providing   Sobolev-Gaffney type inequalities for Sobolev spaces $W^{s,p}(\G,E_0^h)$, when $p>1$ and $h\ge 1$. These results give a more intrinsic perspective of the Sobolev spaces, using the ``adapted'' operators $d_c$ and $\delta_c$.

As discussed in Example $2.4$ of \cite{BaldoOrlandi98}, the Sobolev-Gaffney inequality  for $p=1$ already fails in $\R^n$. Indeed, one can construct a sequence $(\alpha_k)_{k\in\N}$ of smooth compactly supported $1$-forms in $\R^2$ such that 
$$\|d\alpha_k\|_{L^1(\R^2,\bigwedge^{2}T^*\R^2)}+\|\delta\alpha_k\|_{L^1(\R^2,\bigwedge^{0}T^*\R^2)}+\|\alpha_k\|_{L^1(\R^2,\bigwedge^{1}T^*\R^2)}\leq C$$
but
$$\lim_{k\to+\infty}\|\nabla\alpha_k\|_{L^1(\R^2,\bigwedge^{1}T^*\R^2)}=+\infty.$$
As a consequence, in general, a form $\alpha\in L^1(\R^n,\bigwedge^{h}T^*\R^n)$ may have $\delta\alpha\in L^1(\R^n,\bigwedge^{h-1}T^*\R^n)$ and $d\alpha\in L^1(\R^n,\bigwedge^{h+1}T^*\R^n)$, without belonging to the space $W^{1,1}(\R^n,\bigwedge^{h}T^*\R^n)$.

For $p>1$ the following estimates hold for a $h-$ form.

\begin{teo}\label{teo.SobGafftypeCarnot}
Let $s\in\N$ and $\,1<p<\infty$. For every $\alpha\in \mathcal{D}(\G,E_0^h)$ we have:
\begin{itemize}
\item if $h=1$,
\begin{align*}
\|\alpha\|_{W^{s+3,p}(\G,E_0^h)}\leq C\Big(&\|d_c\alpha\|_{W^{s,p}(\G,E_0^{h+1})}+\|\delta_cd_c\delta_c\alpha\|_{W^{s,p}(\G,E_0^{h-1})}\\
&\quad+\|\alpha\|_{L^p(\G,E_0^h)}\Big),
\end{align*}
\item if $h=2$,
\begin{align*}
\|\alpha\|_{W^{s+6,p}(\G,E_0^h)}\leq C\Big(&\|d_c\delta_cd_c\alpha\|_{W^{s,p}(\G,E_0^{h+1})}+\|d_c\delta_c\alpha\|_{W^{s,p}(\G,E_0^{h})}\\
&\quad+\|\alpha\|_{L^p(\G,E_0^h)}\Big),
\end{align*}
\item if $h=3$,
\begin{align*}
\|\alpha\|_{W^{s+6,p}(\G,E_0^h)}\leq C\Big(&\|\delta_cd_c\alpha\|_{W^{s,p}(\G,E_0^{h})}+\|\delta_cd_c\delta_c\alpha\|_{W^{s,p}(\G,E_0^{h-1})}\\
&\quad+\|\alpha\|_{L^p(\G,E_0^h)}\Big),
\end{align*}
\item if $h=4$,
\begin{align*}
\|\alpha\|_{W^{s+3,p}(\G,E_0^h)}\leq C\Big(&\|d_c\delta_cd_c\alpha\|_{W^{s,p}(\G,E_0^{h+1})}+\|\delta_c\alpha\|_{W^{s,p}(\G,E_0^{h-1})}\\
&\quad+\|\alpha\|_{L^p(\G,E_0^h)}\Big).
\end{align*}
\end{itemize}
\end{teo}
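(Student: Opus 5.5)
The plan is to represent the top-order horizontal derivatives of $\alpha$ through the Rumin Laplacian $\Delta_{R,h}$ and kernels of type $0$. The lower-order part of the inhomogeneous norm is then recovered by an interpolation argument in the Folland--Stein spaces.

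\emph{Step 1: splitting the Laplacian.} In each degree I split $\Delta_{R,h}\alpha$ as a sum of two terms. Each term is a left-invariant operator applied to one of the quantities on the right-hand side of the statement:
\begin{align*}
&h=1:\quad \Delta_{R,1}\alpha=(d_c\delta_cd_c)(\delta_cd_c\delta_c\alpha)+\delta_c(d_c\alpha),\\
&h=2:\quad \Delta_{R,2}\alpha=d_c\delta_c(d_c\delta_c\alpha)+\delta_cd_c\delta_c(d_c\delta_cd_c\alpha),\\
&h=3:\quad \Delta_{R,3}\alpha=d_c\delta_cd_c(\delta_cd_c\delta_c\alpha)+\delta_cd_c(\delta_cd_c\alpha),\\
&h=4:\quad \Delta_{R,4}\alpha=d_c(\delta_c\alpha)+\delta_cd_c\delta_c(d_c\delta_cd_c\alpha).
\end{align*}
From the matrices of Section \ref{ex.Cartanrumin} I then check the orders of the outer operators in each splitting. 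In every case the outer operator has order $M_h-M_h/2=M_h/2$, and the inner quantity has order $M_h/2$ in $\alpha$, where $M_h/2=3$ for $h=1,4$ and $M_h/2=6$ for $h=2,3$. For example, $d_c\delta_cd_c$ from $E_0^0$ to $E_0^1$ has order $3$, and $\delta_c$ from $E_0^2$ to $E_0^1$ has order $3$.

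\emph{Step 2: homogeneous estimate.} Let $X^J$ be a horizontal monomial with $d(J)=M_h/2$.
\begin{itemize}
\item For $h=1,4$, Theorem \ref{teo.inverselaplHn} gives $\alpha=\Delta_{R,h}^{-1}\Delta_{R,h}\alpha$. Hence $X^J\alpha$ is a sum of convolutions of the inner quantities with kernels $X^J\Delta_{R,h}^{-1}\mathcal{P}$, where $\mathcal{P}$ is the outer operator. These are of type $M_h-M_h/2-M_h/2=0$, by Proposition \ref{prop.kernelopcheck} and \eqref{eq.convspostcose}.
\item For $h=2,3$, I avoid the polynomial ambiguity in $\Delta_{R,h}^{-1}\Delta_{R,h}$ by using the last part of Theorem \ref{teo.kernelCartan23} directly. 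It gives $X^J\alpha=\Delta_{R,h}\alpha\ast\widetilde{K_J}$ with $\widetilde{K_J}$ of type $12-6=6$. I then move the outer operator $\mathcal{P}$ of order $6$ onto the kernel, which produces kernels of type $0$.
\end{itemize}
Further derivatives $X^I$ with $d(I)\le s$ commute with right convolution, since the operators are left-invariant. Proposition \ref{folland zero} then yields
$$\sum_{d(J)=M_h/2}\|X^J\alpha\|_{W^{s,p}}\le C\big(\|\beta_1\|_{W^{s,p}}+\|\beta_2\|_{W^{s,p}}\big),$$
where $\beta_1,\beta_2$ are the two inner quantities.

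\emph{Step 3: lower-order terms.} To pass to the full $W^{s+M_h/2,p}$ norm, I use the Folland--Stein interpolation inequality from \cite{folland}. It says that for $1<p<\infty$ the norm $\|\alpha\|_{W^{m,p}}$ is equivalent to $\|\alpha\|_{L^p}+\sum_{d(J)=m}\|X^J\alpha\|_{L^p}$, with $J$ ranging over horizontal monomials. Applying it with $m=s+M_h/2$, and writing each such $X^J$ as $X^IX^{J''}$ with $d(J'')=M_h/2$, the top-order terms are controlled by Step 2. This gives the stated inequalities.

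\emph{Main obstacle.} I expect the delicate step to be the kernel bookkeeping for $h=2,3$, where $M_h>Q$. There $\Delta_{R,h}^{-1}$ is not a genuine kernel of positive type, so type-$0$ kernels must come only from the derivative representation in Theorem \ref{teo.kernelCartan23}. I also need the transfer of $\mathcal{P}$ onto $\widetilde{K_J}$ to be legitimate for compactly supported $\alpha$; this I would verify via \eqref{eq.convspostcose}.
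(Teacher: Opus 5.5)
Your proposal is correct and follows essentially the same route as the paper. You split $\Delta_{R,h}\alpha$ into two outer operators of order $M_h/2$ acting on the quantities on the right-hand side, and represent $X^J\alpha$ as $\Delta_{R,h}\alpha\ast\widetilde{K_J}$ (or via $\Delta_{R,h}^{-1}$ when $h=1,4$). You then move the outer operator onto the kernel to get type-$0$ kernels, apply Proposition \ref{folland zero}, and recover the lower-order terms by the Folland interpolation inequality, which is formula (48) of \cite{BF7} in the paper.

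One small correction: for $u\mapsto u\ast K$, left-invariant derivatives fall on $K$, not on $u$. So the $W^{s,p}$-boundedness is genuinely the content of Proposition \ref{folland zero} rather than a commutation argument, but since you invoke that proposition anyway, nothing breaks.
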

\begin{proof} The proof is very similar to the one given in Remark $5.3$ in \cite{BF7}).
Let us  discuss for example the case $h=2$ (the case $h=3$ is obtained by Hodge duality).

\medskip

Let $\alpha\in \mathcal{D}(\G,E_0^h)$. Arguing as in Remark $5.3$ in \cite{BF7} (formula $(48)$ therein is true in any Carnot group),  we have
\begin{equation}\label{eq.stimanormaWmpp}
\|\alpha\|_{W^{s+6,p}(\G,E_0^h)}\leq C\left(\sum_{d(I)=6}\,\|X^I\alpha\|_{W^{s,p}(\G,E_0^h)}+\|\alpha\|_{L^p(\G,E_0^h)}\right).
\end{equation}
Since $2<d(I)=6<12$, by Theorem \ref{teo.kernelCartan23}.2, there exists $\widetilde{K_I}$ a matrix valued kernel of type $6$ such that
$$X^I\alpha=\Delta_{R,h}\alpha\ast\widetilde{K_I}.$$
Thus, 
$$\|X^I\alpha\|_{W^{s,p}(\G,E_0^h)}\leq C\sum_{i,j}\,\|(\Delta_{R,h}\alpha)_j\ast (\widetilde{K_I})_{i j}\|_{W^{s,p}(\G)}.$$
We have
$$(\Delta_{R,h}\alpha)_j\ast (\widetilde{K_I})_{i j}=((d_c\delta_c)^2\alpha)_j\ast (\widetilde{K_I})_{i j}+((\delta_cd_c)^3\alpha)_j\ast (\widetilde{K_I})_{i j},$$
and
$$((d_c\delta_c)^2\alpha)_j\ast (\widetilde{K_I})_{i j}=\sum_k\,P_{i k} (d_c\delta_c\alpha)_k\ast (\widetilde{K_I})_{i j},$$
$$((\delta_cd_c)^3\alpha)_j\ast (\widetilde{K_I})_{i j}=\sum_k\,Q_{i k} (d_c\delta_cd_c\alpha)_k\ast (\widetilde{K_I})_{i j},$$
where $P_{i k}$ and $Q_{i k}$ are constant coefficient homogeneous operators of order $6$ with respect to horizontal derivatives. Thus, by \eqref{eq.convspostcose}, the term $((d_c\delta_c)^2\alpha)_j\ast (\widetilde{K_I})_{i j}$ can be written as a linear combination of
$$(d_c\delta_c\alpha)_k\ast \checkV{X^J} \checkV{(\widetilde{K_I})_{i j}}\quad\quad\text{with $d(J)=6$},$$
and the term $((\delta_cd_c)^3\alpha)_j\ast (\widetilde{K_I})_{i j}$ as a linear combination of 
$$(d_c\delta_cd_c\alpha)_k\ast \checkV{X^J} \checkV{(\widetilde{K_I})_{i j}}\quad\quad\text{with $d(J)=6$}.$$
By Proposition \ref{prop.kernelopcheck}, the kernels $\checkV{X^J} \checkV{(\widetilde{K_I})_{i j}}$ are kernels of type $0$. Therefore,  by Proposition \ref{folland zero},
\begin{gather}\label{eq.scrittsobgaffp}
\begin{split}
\|X^I\alpha\|_{W^{s,p}(\G,E_0^h)}&\leq C\sum_{i,j}\,\|(\Delta_{R,h}\alpha)_j\ast (\widetilde{K_I})_{i j}\|_{W^{s,p}(\G)}\\
&\leq C\left(\|d_c\delta_cd_c\alpha\|_{W^{s,p}(\G,E_0^{h+1})}+\|d_c\delta_c\alpha\|_{W^{s,p}(\G,E_0^{h})}\right).
\end{split}
\end{gather}
Putting together \eqref{eq.stimanormaWmpp} with \eqref{eq.scrittsobgaffp}, we complete the proof.
\end{proof}

\medskip

\begin{remark}
 With the same hypothesis we can also obtain the following estimate,
\begin{align}\label{eq.sobgaffeqineqteocorodeltad}
\|\alpha\|_{W^{s+M_h/2,p}(\G,E_0^h)}\leq C\Big(&\|d_c\alpha\|_{W^{s+M_h/2-s_h,p}(\G,E_0^{h+1})}\\
&+\|\delta_c\alpha\|_{W^{s+M_h/2-r_h,p}(\G,E_0^{h-1})}+\|\alpha\|_{L^p(\G,E_0^h)}\Big),\nonumber
\end{align}
where $s_h, r_h$ are as in \eqref{indici} and $M_h$ is the order of $\Delta_{R,h}$.   
\end{remark}

\medskip

\section*{Acknowledgments}

A.B. and A.R. are supported by the University of Bologna, funds for selected research topics, 
and by GNAMPA of INdAM (Istituto Nazionale di Alta Matematica ``F. Severi''), Italy.


\bibliographystyle{amsplain}

\bibliography{bibliography}

\bigskip
\tiny{
\noindent
Annalisa Baldi and Alessandro Rosa,
\par\noindent
Universit\`a di Bologna, Dipartimento
di Matematica\par\noindent Piazza di
Porta S.~Donato 5, 40126 Bologna, Italy.
\par\noindent
e-mail:
annalisa.baldi2@unibo.it $,\,\;$ alessandro.rosa15@unibo.it 
}

\medskip

%


\end{document}